\title[DPG loss functions for neural networks]{DPG loss functions for learning parameter-to-solution maps by neural networks}
\author{Pablo Cort{\'e}s Castillo}
\author{Wolfgang Dahmen}
\author{Jay Gopalakrishnan}
\pgfplotsset{compat=1.18}
\definecolor{crimson}{RGB}{220, 20, 60}
\definecolor{navy}{RGB}{0, 0, 128}
\definecolor{royalblue}{RGB}{65, 105, 225}
\definecolor{darkcyan}{RGB}{0, 139, 139}
\newcommand{\bb}[1]{\mathbb{{#1}}}
\newcommand{\cl}[1]{\mathcal{{#1}}}
\newcommand{\Qt}{\tilde{\mathcal{Q}}}
\newcommand{\Real}{\mathbb{R}}
\newcommand{\vertiii}[1]{{\left\vert\kern-0.25ex\left\vert\kern-0.25ex\left\vert #1 
    \right\vert\kern-0.25ex\right\vert\kern-0.25ex\right\vert}}
\newcommand{\vertiiii}[1]{{\left\vert\kern-0.25ex\left\vert\kern-0.25ex\left\vert\kern-0.25ex\left\vert #1 
    \right\vert\kern-0.25ex\right\vert\kern-0.25ex\right\vert\kern-0.25ex\right\vert}}
\newtheorem{theorem}{Theorem}[section]
\newtheorem{proposition}[theorem]{Proposition}
\theoremstyle{definition}
\newtheorem{definition}[theorem]{Definition}
\newtheorem{example}[theorem]{Example}
\theoremstyle{remark}
\newcommand{\dpg}{\mathrm{dpg}}
\newcommand{\Xhdpg}{\cl{X}_h^{\dpg}}
\newcommand{\Xdpg}{\cl{X}^{\dpg}}
\newcommand{\Xdpgo}{\cl{X}_0^{\dpg}}
\newcommand{\Xdpgh}{\hat{\cl{X}}^{\dpg}}
\newcommand{\xdpg}{x_h^{\dpg}}
\newcommand{\Ydpg}{\cl{Y}^\dpg}
\newcommand{\Yhdpg}{\cl{Y}_h^\dpg}
\newcommand{\fdpg}{\cl{F}_h^{\dpg}}
\newcommand{\fosls}{\mathrm{fos}}
\newcommand{\Xhfosls}{\cl{X}_h^{\fosls}}
\newcommand{\Xfosls}{\cl{X}^{\fosls}}
\newcommand{\xfosls}{x_h^{\fosls}}
\newcommand{\ffosls}{\cl{F}_h^{\fosls}}
\newcommand{\oh}{\varOmega_h}
\newcommand{\Xh}{{\cl{X}_h}}
\newcommand{\Fh}{{\cl{F}_h}}
\newcommand{\Fht}{{\cl{F}_{h,\theta}}}
\newcommand{\om}{\varOmega}
\renewcommand{\Omega}{\varOmega}
\renewcommand{\forall}{\text{ for all }} % avoid logical qualifiers except in logic
\newcommand{\Hdiv}{H(\div)}
\newcommand{\Hdivoh}{H(\div, \oh)}
\newcommand{\Ho}{\mathring{H}^1}
\newcommand{\Hoh}{\mathring{H}^{1/2}(\d\oh)}
\newcommand{\ksa}{k_{s,\alpha}}
\newcommand{\ka}[1]{k_{{#1},\alpha}}
\newcommand{\Lbdo}{\cl{L}^{\fosls}}
\newcommand{\Lfosls}{\cl{L}^{\fosls}}
\newcommand{\Ldpg}{\cl{L}^{\dpg}}
\newcommand{\hLsdpg}{\hat{\cl{L}}_{s}^{\dpg}}
\newcommand{\Lsdpg}{{\cl{L}}_{s}^{\dpg}}
\newcommand{\Lss}[2]{{{\cl{L}}}_{{#1},{#2}}^{\dpg}}
\newcommand{\hLss}[2]{\hat{{\cl{L}}}_{{#1},{#2}}^{\dpg}}
\newcommand{\hLssL}[2]{\hat{{\cl{L}}}_{{#1},{#2}}^{\dpg, L}}
\newcommand{\hLssR}[2]{\hat{{\cl{L}}}_{{#1},{#2}}^{\dpg, R}}
\newcommand{\hLs}[1]{{\hat{\cl{L}}}_{{#1}}^{\dpg}}
\newcommand{\Lsa}[1]{{{\cl{L}}}_{{#1}}^{\dpg}}
\newcommand{\hU}{\hat{\cl{U}}}
\newcommand{\hRT}{\hat{\cl{R\!T}}_{\!0}}
\newcommand{\RT}{{\cl{R\!T}_{\!0}}}
\let\div\relax
\DeclareMathOperator{\tr}{{\ensuremath{\text{tr}}}}
\newcommand{\trn}{\mathop{\mathrm{tr}_n}}
\DeclareMathOperator{\osc}{{\ensuremath{\text{osc}}}}
\DeclareMathOperator{\div}{{\ensuremath{\text{div}}}}
\newcommand{\vepzh}{\varepsilon^{\w}_h}
\newcommand{\vepzs}{\varepsilon^{\w}_{s}}
\newcommand{\vepzhs}{\varepsilon^{\w}_{h, s}}
\newcommand{\vpi}{\varPi_h}
\newcommand{\vpig}{\varPi_h^g}
\newcommand{\vpid}{\varPi_h^d}
\newcommand{\ip}[1]{\langle{#1}\rangle}
\def\d{\partial}
\DeclareMathOperator*{\argmin}{arg\,min}
\DeclareMathOperator*{\cmax}{cmax}
\def\argmin{\mathop{\rm argmin}}
\newcommand{\pp}{\alpha}
\newcommand{\pdom}{\mathcal{Q}}
\newcommand{\cR}{\mathcal{R}}
\newcommand{\cM}{\mathcal{M}}
\newcommand{\cF}{\mathcal{F}}
\newcommand{\R}{\mathbb{R}}
\newcommand{\LL}{\mathbb{L}}
\newcommand{\x}{\mathfrak{u}}
\newcommand{\y}{\mathfrak{v}}
\newcommand{\z}{\mathfrak{z}}
\newcommand{\w}{\mathfrak{w}}
\newcommand{\U}{\cl X}  % is this the intended notation?
\newcommand{\V}{\cl Y}
\newcommand{\X}{\mathbb{X}}
\newcommand{\Y}{\mathbb{Y}}
\newcommand{\cX}{\mathcal{X}}
\newcommand{\cY}{\mathcal{Y}}
\renewcommand{\xfosls}{\x^{\rm fos}_h}
\renewcommand{\xdpg}{\x^{\rm dpg}_h}
\begin{document}
\begin{abstract}
  {We develop, analyze, and experimentally explore residual-based loss functions for machine learning of parameter-to-solution maps in the context of parameter-dependent families of partial differential equations (PDEs). Our primary concern is on rigorous accuracy certification to enhance prediction capability of resulting deep  neural network reduced models. This is achieved by the use of variationally correct loss functions. Through one specific example of an elliptic PDE, details for establishing the variational correctness of a loss function from an ultraweak  Discontinuous Petrov Galerkin (DPG) discretization are worked out. Despite the  focus on the example,  the proposed concepts apply to a much wider scope of problems, namely problems for which stable DPG formulations are available. The issue of {high-contrast} diffusion fields and ensuing difficulties with degrading ellipticity are discussed. Both numerical results and theoretical arguments illustrate that for high-contrast diffusion parameters the proposed DPG loss functions deliver much more robust performance than simpler least-squares losses.}
\end{abstract}
\maketitle

\section{Introduction}
\label{sec:introduction}
%%%%%%%%%%%%%%%%%%%%

The need to recover information on a physical state of interest from a
limited amount of observational data is ubiquitous in scientific and
technological applications.  To make such tasks feasible, it is
crucial to leverage the physical laws that the states obey. However,
these are often only partially known. A common way to deal with such
uncertainties is to formulate the mathematical model as a system of
partial differential equations (PDEs) depending on unknown model data,
(such as coefficient fields, initial or boundary conditions, source
terms, or constitutive laws), collected into {\em parameters} $\pp$
from finite- or infinite-dimensional spaces. We have in mind PDE solutions $u(x, \alpha)$ depending on a spatial variable $x$ in a low-dimensional domain $\om \subset \R^d$ and a parameter variable $\alpha$ in a possibly high-dimensional domain $\cl Q$. 
For each parameter instance $\pp\in \pdom$, the solution $u(x, \alpha)$, also written as $u(\alpha)$ or simply $u$,  is 
the  unique solution of 
\begin{equation}
\label{fiber}
\cR(u,\pp)=0,
\end{equation}
for some (generally nonlinear) PDE $\cR$ written in residual form.
The structure and range of $\pdom$ determine the ``design space'' or
``solution manifold'' $\cM$ comprised of the states $u(\pp)$ that are
obtained as solutions of \eqref{fiber} when $\pp$ traverses $\pdom$.
Equation~\eqref{fiber} implicitly defines a {\em
  parameter-to-solution} map $\cF: \pdom \to \cM$.
Generally vector spaces in which $u$ is to be found may change from
point to point on the manifold.
At each parameter instance $\pp \in \pdom$, the problem~\eqref{fiber}
for $u$ at the corresponding point on the manifold is referred to as a
``fiber problem.''
Exploration of 
$\cM$ or solving inverse tasks like state-estimation or parameter-estimation when using $\cM$ as a prior, require a large---sometimes prohibitive---number of high-fidelity forward simulations. 
This calls for generating reduced models providing efficiently computable surrogates  for the {parameter-to-solution} map $\cF$.
The underlying rationale is that evaluating such a surrogate at any given parameter instance is much more efficient than computing a corresponding high-fidelity approximate solution, see e.g.~\cite{BHKS2021,DDP,Stu1}. Generating a surrogate typically takes a substantial computational effort, which though has to be performed only once, and is considered an ``offline-cost''.
It amortizes quickly when many parameter queries are needed.

Generating such surrogates is a central instance of {\em Operator Learning} which has lately been attracting considerable attention,
see e.g.~\cite{BLST2023,KLLABSA2023,Stu2,KLLABSA2021}. In full generality, it aims at approximating mappings between Banach space, hence permitting infinite-dimensional inputs and outputs. 
In this latter regard, we are more modest in this paper
and focus on a scenario where $\pdom$ is a finite-dimensional
diffusion field in a second-order elliptic boundary value problem. However, we address different challenges which are, in our opinion, of paramount importance when striving for {\em prediction capability}, namely challenges in 
controlling prediction error in model-compliant norms even in singularly perturbed regimes of parameters.   The second-order elliptic boundary value problem has numerous
applications, a typical one being where $u$ represents the pressure in
a porous medium flow and $\cl R$ represents the simplest version of
Darcy's model. In the construction of surrogates,
we use  finite elements to discretize the low-dimensional $\om$
but use deep neural networks (NN) to discretize $\cl F$ from the high-dimensional $\cl Q$. This means that our solution predictions take the form
\begin{equation}
  \label{eq:1}
  u_{\theta}(x, \alpha)  = \sum_k v_{k, \theta}(\alpha)\; \psi_k(x)
\end{equation}
where $\psi_k(x)$ are standard finite element basis functions and the
coefficients $v_{k,\theta}(\alpha)$ in the basis expansion are
provided by the NN whose settings are determined by a collection $\theta$.
While many works discretize spatial dependence also using NN
(exceptions include \cite{BDO,GeistPeterRasla21,HeissGuhriEigel23})
we opt for~\eqref{eq:1} to leverage existing finite
element techniques and avoid issues in imposing essential
boundary conditions on unstructured spatial domains.

To put our focus on prediction  into perspective, most operator learning approaches are purely data-driven: 
one first computes a sufficiently large number
of high-fidelity solutions that are then used as training samples for performing regression over suitable hypothesis classes for operators---see,
e.g.,~\cite{BLST2023,GeistPeterRasla21,HeissGuhriEigel23,Stu2,KLLABSA2023,KLLABSA2021}. The accuracy can then be assessed via estimating the  ``generalization error'' incurred. To mitigate the potentially high cost of computing the high-fidelity solutions, other approaches use {\em residual-based loss functions} and avoid solving large systems during training. The best known version is PINN (Physics-Informed Neural Network) \cite{Kar2,Kar3,Kar1} which employs a Monte-Carlo loss 
\begin{equation}
    \label{PINN}
\theta^*\in \argmin_{\theta\in \Theta}\frac{1}{\#\widehat\pdom \;\#\widehat\Omega}\sum_{\pp\in \widehat\pdom, x\in \widehat\Omega}\big|\cR(u_\theta(x,\pp),\pp)\big|^2,
\end{equation}
to learn an approximation $u_{\theta^*}$ from a hypothesis class
determined by a finite budget $\Theta$ of trainable weights $\theta$.
Here $\widehat\Omega$ is a finite set of spatial samples (such as
quadrature points in $\Omega$ as well as on $\partial\Omega$) and
$\widehat\pdom\subset\pdom$ is a finite collection of model
parameters. It is not possible to summarize the burgeoning PINN
literature here, but we note that error control and error analysis
have received the attention of others:
see~\cite{AinswDong25,Bonito,Mishra1,Pardo,ZeinhMasriMarda24} and
references therein.  Most of such literature is however concerned with
parameter-independent problems and aim to solve just a single fixed PDE, while
our focus is on the parameter-to-solution map.  While~\eqref{PINN}
offers great computational convenience (as it only requires evaluation
of residuals $\cl R$ at spatial and parametric samples), it has the
following serious drawbacks.  Depending on the type of PDE, the
residual $\cl R$ need not be a function in $L_2(\Omega)$ (a problem
noted in several works~\cite{BDO,Pardo,ZeinhMasriMarda24}). Moreover,
when pursuing an ansatz such as~\eqref{PINN}, the only possible
accuracy assessment would be based on the loss itself, and a loss of
the form \eqref{PINN} need neither be an efficient nor a
reliable indicator of the generalization errors.

In this paper, we also opt for employing
residual-based loss functions but insist on these losses being {\em variationally correct}. A precise definition for our situation is provided in Definition~\ref{def:vc}, but to convey the idea right away, 
when a loss function is variationally correct, its size, at any stage of the optimization process, provides, up to uniform constant factors, a lower and upper bound for the approximation error with respect to the respective model compliant norm (see \cite{BDO}). %
As explained later in detail, this is intimately related to a {\em stable variational formulation} of the PDE (see also \cite{ZBYZ}). Again, related ideas in prior works have been confined to solving a single PDE in low spatial dimensions because corresponding 
residual loss functions are only then practically feasible~\cite{Canuto,Bertol,Friedrichs,Urban,MR:21}.

Even though the presented concepts apply to a much wider scope of PDE models, namely whenever we have a stable Discontinuous Petrov Galerkin (DPG) formulation  \cite{DemkoGopal11,DPGacta} at our disposal, in favor of quantitative accuracy studies,  we focus here on the homogeneous Dirichlet problem for second-order elliptic equations with {\em parameter-dependent} diffusion coefficients. Even when
the range $\pdom$ of parameters guarantees uniform ellipticity, PINN in the form \eqref{PINN} need not be variationally correct. As explained in Section~\ref{sec:param-solution-map} (see also~\cite{BDO}), one key step to realizing variational correctness is to reformulate the PDE first as a well-posed {\em first-order system} for the flux and the solution, i.e., for $d+1$ scalar solution components. For strictly positive and bounded diffusion coefficients, there exist actually several distinct stable variational formulations for the first-order system, differing by the choice of trial and test space. In this paper, we discuss  two such formulations that mark, in some sense, extreme cases.
In the first one, a closed subspace $\U$ of a {graph space}, determined by homogeneous boundary conditions, serves as the trial space. It can then be shown that the induced operator maps the trial space {\em onto} a test space $\V$ which equals the $(d+1)$-fold  Cartesian product of $L_2(\om)$, thus giving
a well-posed variational formulation in $ \U \times \V$.
This is  the 
{\em First-Order System Least Squares} (FOSLS) formulation introduced in \cite{FOSLS_coercivity}. Its residual is well-defined in $L_2$ and the $L_2$-residual is uniformly proportional to the corresponding error
measured in the graph norm. Incidentally, although PINN is not variationally correct for the original second-order formulation, it is for the FOSLS formulation.

How sharp the $L_2$-loss is as a lower and upper bound for the graph norm errors, depends, however, on the size-range of the diffusion coefficients. This raises the main issue addressed in this paper, namely, permitting arbitrarily large size variations in the diffusion coefficient. This is sometimes referred to as {\em high-contrast} problems. To indicate the issue, it is not hard to see that for piecewise constant diffusion (and constant right hand side) the solution tends to zero in regions where the diffusion tends to infinity while the gradient tends to infinity where the diffusion tends to zero---see~\cite{CDDS} and further literature cited there.
The approach in~\cite{CDDS} developed particular reduced bases incorporating these limit features.
In contrast, here we address this issue by considering the so-called {\em ultraweak} formulation where the trial space is just the $(d+1)$-fold Cartesian product of $L_2(\om)$, while the test space $\V$ is now a boundary condition-induced closed subspace of the graph space of the {\em dual} operator (see Example~\ref{eg:uw}). This results in a well-posed formulation in which parameter robustness issues are
manifested 
in a hard-to-compute dual norm of $\V$. To overcome these issues, we tap into DPG  methodology~\cite{DemkoGopal11,DPGacta}.  In quantitative terms this approach is slightly more expensive than the FOSLS formulation.
However, we will show that it is much more robust for ``degenerating'' parameters $\pp$ than FOSLS. While we have a complete theory establishing the
variational correctness of DPG if the parameter $\alpha$ is confined
to a bounded range, we do not have a full theory establishing the
observed robustness of the practically implemented DPG loss. We do show
however, that a closely related ideal DPG loss function is variationally
correct with explicit constants showing $\alpha$-robustness.

The layout of the paper is as follows. Section \ref{sec:param-solution-map} is devoted to the problem description. Starting with three simple examples in \S~\ref{ssec:problem}, we outline the issues that motivate the remainder of the developments. In \S~\ref{ssec:vc}, we define concisely the central notion of variationally correct loss functions based on a functional setting that properly respects the mapping properties of parameter-to-solution maps. In Section~\ref{sec:fosls-loss-function} we introduce the FOSLS loss function and establish its variational correctness. In essence, this follows~\cite{BDO}, except that we highlight the dependence of the constants involved on the parameter $\pp$.
An alternate DPG formulation is then introduced and analyzed in Section~\ref{sec:dpg-loss-function} along analogous lines. The performance of both formulations with regard to prediction accuracy is studied in
Section~\ref{sec:initial-results}.
These findings indicate some advantages of the DPG formulation when the contrast in the diffusion coefficient increases and the conditioning of the FOSLS operator exhibits a stronger degradation. This motivates 
a more in-depth robustness study in Section~\ref{sec:param-robust}.

\section{A parametric PDE model}
\label{sec:param-solution-map}
%%%%%%%%%%%%%%%%%%%%%%%%%%%%%%%

\subsection{The model problem and first-order reformulations}
\label{ssec:problem}

Let $\om$ be a bounded Lipschitz domain in $\Real^d$, let
$\alpha : \om \to \Real$ be a positive function, and let
$a(x) = \alpha(x)^{-1}$. Consider the stationary diffusion equation
\begin{equation}
  \label{laplaceDP}
    \begin{aligned}
      -\div(a(x) \nabla u)
      & = f && \text{ in } \quad \Omega,
      \\
      u & = 0 &&  \text{ on } \quad \partial\Omega.
    \end{aligned}
\end{equation}
Here we are not interested in solving \eqref{laplaceDP} for a fixed
$a(x)$, but for a parameterized range $\pdom$ of diffusion
coefficients which will be specified later below. When $a$
varies, or equivalently $\alpha$ varies, so does the solution $u$,
which we shall write as $u(\alpha)$ or $u(x, \alpha)$ when we need
to highlight the $\alpha$-dependence. As mentioned in
Section~\ref{sec:introduction}, a central objective in operator
learning is to construct surrogates for the mapping $\alpha \mapsto u(\alpha)$ so
as to facilitate an efficient exploration of the corresponding design
space $\cM$ of viable states $u(\alpha)$, $\alpha\in \pdom$.

The boundary value problem~\eqref{laplaceDP} admits many well-posed variational
formulations (three of which are given below).
Suppose $\x$ is a group variable which contains $u$ as
one of the components and suppose that there is a well-posed variational
formulation that uniquely fixes $\x \in \cl X$ as the solution of a
weak formulation
\begin{equation}
  \label{eq:2}
  b_\alpha(\x, \y) = \ell(\y) \qquad \text{ for all } \y \in \V,
\end{equation}
where $b_\alpha: \cl X \times \cl Y \to \bb R$ is a continuous
bilinear form on the product of some Hilbert spaces $\cl X$ and
$\cl Y$ and $\ell \in \cl Y'$ is the functional capturing the
source~$f$. Let the continuous operator generated by $b_\alpha$ be
denoted by $B_\alpha: \cl X \to \cl Y'$, i.e.,
$(B_\alpha \w)(\y) = b_\alpha(\w, \y)$ for all $\w \in \cl X$ and
$\y \in \cl Y$. The wellposedness of~\eqref{eq:2}, by Babu\v{s}ka's
theorem, implies that
\begin{equation}
  \label{eq:3}
  \| \w \|_{b_\alpha} = \sup_{0 \ne \y \in \cl Y}
  \frac{ b_\alpha( \w, \y) }{ \| \y\|_{\cl Y} } 
\end{equation}
is an equivalent norm on $\cl X$. 
Suppose we are given a candidate
approximation $\w \in \cl X$ to the exact solution $\x$
of~\eqref{eq:2}. The error in such an approximation satisfies
\begin{equation}
  \label{eq:4}
  \| \x - \w \|_{b_\alpha} =
  \sup_{0 \ne \y \in \cl Y}
  \frac{ b_\alpha( \x - \w, \y) }{ \| \y\|_{\cl Y} }
  =
  \sup_{0 \ne \y \in \cl Y}
  \frac{ \ell(\y) - b_\alpha( \w, \y) }{ \| \y\|_{\cl Y} }
  = \| \ell - B_\alpha \w \|_{\cl Y'},
\end{equation}
which we refer to as the {\em error-residual equation}.
We may now set $\cl R$ in \eqref{fiber} by
$\cl R( \w, \alpha) = \| \ell - B_\alpha \w \|_{\cl Y'}$ and observe that
the solution $\x$ indeed satisfies~\eqref{fiber} due to~\eqref{eq:4}.

The central issues that occupy us in this paper can be described in
simple terms using~\eqref{eq:4}. We would like to control the error
$\x - \w$ in some desired $\alpha$-independent norm as robustly as
possible while $\alpha $ varies in $\cl Q$.  To do so, we may use the
residual $\| \ell - B_\alpha \w \|_{\cl Y'}$ in~\eqref{eq:4}
containing only known quantities $\ell$ and $\w$, provided we know
{\em (i)}~how to compute (the residual in) the dual norm
$\| \cdot \|_{\cl Y'}$, and {\em (ii)}~how to obtain $\alpha$-robust
equivalences between the desired $\alpha$-independent norm and
$\| \cdot \|_{b_\alpha}$ (which involves dealing with degenerating
ellipticity when $\alpha$ attains very large or very small values
while ranging over $\pdom$).  The following examples show to what
extent these {\em two issues can be influenced by the choice of the
variational formulation.}

\begin{example}[Primal formulation]
  \label{eg:standard-weak-form}
  The canonical weak formulation of~\eqref{laplaceDP} seeks the
  solution $\x = u$ in the Sobolev space
  ${\Ho} = \{ v \in L_2(\om): \d_i v \in L_2(\om), \text{ for every }
    i=1,\dots, d$ and  $v|_{\d\om} =0\}$, setting
  \[
    \cl X = \cl Y = \Ho,\qquad
    b_\alpha(u, v) = \int_\om \alpha^{-1} \nabla u \cdot \nabla v \; dx, \qquad
    \ell (v) = \int_\om f \, v \; dx
  \]
  and solving~\eqref{eq:2}.  Issue~{\em (i)} manifests as the
  difficulty to compute the $H^{-1}:=(\Ho)'$ norm. Issue~{\em
    (ii)} is evident from the strong $\alpha$-dependence of the
  $\| \cdot \|_{b_\alpha}$-norm in this case, e.g., 
  \[
    \| \nabla v \|_{L_2}^2
    \;\min_{x \in \om} \alpha^{-1}  
    \;\le\;  \| v \|_{b_\alpha}^2
    \;\le\;
    \| \nabla v \|_{L_2}^2\; \max_{x \in \om} \alpha^{-1}  
  \]
  shows that attempts to control the error
  in an $\alpha$-independent norm, such as  
  $ \| u \|_{\cl X} = \| u \|_{\cl Y} = \| \nabla u \|_{L_2}$,
  or the standard $H^1$-norm, are plagued by difficulties. Here and throughout,
  $\| \cdot \|_{L_2}$ denotes the norm
  of $L_2$ or its Cartesian products.
\end{example}

\begin{example}[FOSLS formulation]
  \label{eg:fosls}
  Given a square-integrable $f\in L_2(\om)$, 
  \eqref{laplaceDP} can equivalently be written in the first-order form using $\alpha$ as 
  \begin{subequations}
    \label{laplaceDP-first-order}
    \begin{align}
      \label{laplaceDP-first-order-1}
      \alpha(x)  \,q + \nabla u
      & =  0
      &&\text{ in } \quad \Omega,
      \\
      \div  q
      & = f
      &&\text{ in } \quad \Omega,
      \\
      u & = 0
      && \text{ on } \quad \partial\Omega.
    \end{align}
  \end{subequations}
  % for some given square-integrable $f \in L_2(\Omega)$.
  It is useful to introduce a notation for the entire first-order
  operator in~\eqref{laplaceDP-first-order}. 
  \begin{equation}\label{LaplaceDP_OPform}
    A_\alpha (q, u) =
    % \begin{bmatrix}
    %   \alpha  q + \nabla u \\ \div  q
    % \end{bmatrix},
    \big( \alpha  q + \nabla u, \; \div  q \big).
  \end{equation}
  Then~\eqref{laplaceDP-first-order} takes the form
  $ A_\alpha \x = (0, f) $ in $\om$ for the group variable $\x = ( q, u)$.

  Since the weak formulation of \eqref{laplaceDP} allows for $f\in H^{-1}(\om)$ the above assumption requiring $f$ in $L_2(\om)$ seems to entail a restriction. However, recall that every $f\in H^{-1}(\om)$ can be written as $f= f_\circ + \div\,g$ where $f_\circ\in L_2(\Omega)$ and $g\in L_2(\om)^d$. Then the first-order system takes the form $A_\alpha ( q, u) = (g, f_\circ)$, thus covering the same range of source terms as \eqref{laplaceDP}. It is only  for the sake of convenience that we continue to work with $f_\circ=f, g=0$.

  The choice of a pair of trial and test spaces $\cX,\cY$, that give rise to a stable weak formulation of \eqref{LaplaceDP_OPform}, is now less canonical.
Equation~\eqref{laplaceDP-first-order-1} defines the ``flux variable''
$q$ in terms of derivatives of the ``primal variable'' $u$. The former
lies in $\Hdiv$, the space of vector fields whose components
and divergence are in $L_2(\om)$. The latter lies in
$\Ho$ in accordance with the weak formulation of \eqref{laplaceDP}.
Thus, we view $A_\alpha$ as a mapping
\begin{equation}\label{Adef}
  A_\alpha : \Hdiv  \times \Ho \longmapsto [L_2(\Omega)]^d \times L_2(\Omega),
\end{equation}
and construct a weak formulation of $ A_\alpha \x = (0, f) $ by
multiplying both sides by $\y = (r, w)$ in
$[L_2(\Omega)]^d \times L_2(\Omega)$ and integrating over $\om$ to
arrive at the setting~\eqref{eq:2} with
\begin{equation}
  \label{eq:5}
  \begin{gathered}
    \cX = \Hdiv \times \Ho,
    \qquad
    \cY = L_2(\om) \times [L_2(\om)]^d,
    \\ % \quad
    b_\alpha(\x, \y) = \int_\om A_\alpha \x \cdot \y\; dx,
    \qquad
    \ell(\y) = \int_\om F \cdot \y\; dx,
  \end{gathered}
\end{equation}
where $F = (0, f)$. Equation~\eqref{eq:2} in this case can equivalently be written
in component form, with  $\x = (q, u)$ and $\y = (r, w)$ as 
\begin{equation}
    \label{weakA1}
    \int_\om (\alpha q+\nabla u) \cdot r + ({\rm div}\,q -f) w\,dx = 0,
    \quad w\in L_2(\om),\, r\in [L_2(\om)]^d.
\end{equation}
This is the weak formulation underlying the FOSLS method.  It is well known that~\eqref{Adef} is a continuous bijection and that~\eqref{weakA1}  is uniquely solvable for every
$\alpha$ that is uniformly positive and bounded in
$\bar\om$. Moreover, it can be shown to yield the same solution as the
canonical weak formulation of \eqref{laplaceDP} in the sense that $u$
is the weak solution of \eqref{laplaceDP} if and only if
$q = -a(x)\nabla u$ and $u$ solves \eqref{weakA1}---see
\cite{BDO,FOSLS_coercivity}.

The  error-residual equation~\eqref{eq:4} now takes the form
\begin{equation}
\label{eq:6}
\| \x - \w \|_{b_\alpha} = \| F - A_\alpha \w \|_{L_2},\qquad \w \in \Hdiv \times \Ho.
\end{equation}
The  interest in
the FOSLS formulation~\eqref{eq:5} is now evident: all difficulties
with the issue~{\em (i)} disappears since the norm of 
$\cl Y = \cl Y'$ is just the $L_2$-norm which is simple to evaluate.
In regards to issue {\em
  (ii)}, note that~\eqref{eq:5} implies
$  \| \w \|_{b_\alpha} = \| A_\alpha \w\|_{L_2}.    
$
Unfortunately, this
is an $\alpha$-dependent norm, which cannot be easily controlled by
$\alpha$-independent norms with $\alpha$-robust equivalence
constants---see \eqref{eq:ellipticity_condition} below and
Section~\ref{sec:fosls-loss-function}. We will see practical
manifestations of this difficulty in our experimental studies in
Section~\ref{sec:dpg-loss-function}.
\end{example}

\begin{example}[Ultraweak formulation]
  \label{eg:uw}
  This formulation is also based on the first-order
  reformulation~\eqref{laplaceDP-first-order}.  Integrating
  \eqref{weakA1} by parts,
  $$
  \int_\om \alpha q \cdot r - u {\rm div}\,r -q\nabla w- f w\,dx
  + \int_{\partial\om} (n\cdot r)w + (n\cdot q)w\,ds =0.
  $$  
  Taking $w\in \Ho$ and $r\in \Hdiv$ as test functions, we arrive at
  the problem of finding $(q,u)\in [L_2(\om)]^d \times L_2(\om)$ such
  that
  \begin{equation}
    \label{weakA2}
    \int_\om q \cdot (\alpha r -\nabla w) - u\, {\rm div}\,r - f w\,dx=0.
  \end{equation}
  What were previously essential boundary conditions are now natural ones.
  Introducing the adjoint operator 
  \begin{equation}
    \label{eq:Aadjoint}
    A_\alpha^* (q, u) =
    \big(
    \alpha q - \nabla u,  \; -\div q
    \big),
  \end{equation}
  we may rewrite~\eqref{weakA2} as~\eqref{eq:2} with the following settings: 
  \begin{equation}
    \label{eq:7}
    \begin{gathered}
      \cX = [L_2(\om)]^d \times L_2(\om), 
      \qquad
      \cY =  \Hdiv \times \Ho,
      \\ % \quad
      b_\alpha(\x, \y) = \int_\om  \x \cdot A_\alpha^* \y\; dx,
      \qquad
      \ell(\y) = \int_\om F \cdot \y\; dx,
    \end{gathered}
  \end{equation}
  with $\x = (q, u) $ and $\y = (r, w)$.  This is often called an
  ``ultraweak formulation'' since all the solution components need
  only be regular enough to be in $L_2(\om)$. It is well
  posed~\cite{CarstDemkoGopal16}. If the  ultraweak solution
  belongs to $\Hdiv\times \Ho$  then it agrees with the
  solution to \eqref{weakA1}.

  Consider the error-residual equation~\eqref{eq:4} for the
  formulation~\eqref{eq:7}. It is well known that
  $A_\alpha^* : \Hdiv \times \Ho \to [L_2(\om)]^d \times L_2(\om)  $
  is  a continuous bijection (just as \eqref{Adef} is).  Hence $\cl Y$ may
  equivalently be normed by
  \begin{equation}
    \label{eq:opti-test-norm}
    \|\y\|_{\cl Y_\alpha} : = \| A_\alpha^* \y\|_{L_2}, \qquad
    \y \in \Hdiv \times \Ho.    
  \end{equation}
  Such norms have been used under the name ``optimal test norms''
  previously~\cite{DHSW,DPGacta,ZitelMugaDemko11}.  Let $\cl Y_\alpha$
  denote $\Hdiv \times \Ho$ endowed with the above
  norm~$\|\cdot\|_{\cl Y_\alpha}.$ Revising $\cl Y$ in~\eqref{eq:7} to
  $\cl Y_\alpha,$ the norm in~\eqref{eq:3} becomes
  \[
    \| \w \|_{b_\alpha} = \sup_{0 \ne \y \in \cl Y_\alpha}
    \frac{ b_\alpha( \w, \y) }{ \| A_\alpha^*\y\|_{L_2} } = \| \w \|_{L_2},
  \]
  so~\eqref{eq:4} implies 
  \begin{equation}
    \label{eq:uw-error-residual}
    \| \x - \y \|_{L_2} = \| \ell - B_\alpha \y \|_{\cl Y_\alpha'}.
  \end{equation}
  An attractive feature of~\eqref{eq:uw-error-residual} is that it has
  no $\alpha$-dependent constants and the norm on the left hand
  side is the completely $\alpha$-independent $L_2$-norm. Thus the previously
  mentioned issue~{\em (ii)} disappears. However, the norm on the
  right hand side of~\eqref{eq:uw-error-residual} is a non-trivial
  dual norm whose numerical evaluation is an issue, i.e., issue {\em
    (i)} remains. This can be fixed to some extent using the so-called
  ultraweak DPG method, as we shall see in
  Section~\ref{sec:param-robust}.
\end{example}

%%%%%%%%%%%%%%%%%%%%%%%%%%%%%%%%% 
\subsection{Parameter-to-solution map}\label{ssec:ptsm}
%%%%%%%%%%%%%%%%%%%%%%%%%%%%%%%%%%%%%%%%%%%%%%

Since handling
arbitrary variations in the function parameter $\alpha$ is beyond the scope of this paper,
we first restrict ourselves to a finite-dimensional parameter set for
$\alpha$ by subdividing $\om$ into finitely many subdomains $\om_i$,
$i=1, \dots, m_\alpha$, and requiring $\alpha$ to be a constant in each
$\om_i$. Namely, using $\chi_i(x)$ to denote the indicator function of
$\om_i$, we define the parameter domain to be
\begin{equation}
  \label{eq:Qdefn}
  \mathcal{Q} = \left \{\sum_{i=1}^{m_\alpha} \alpha_i \chi_i(x) \; : \; \alpha_i \ge 0 \right \}.
\end{equation}
We are interested in approximating the
parameter-to-solution map~$\cl F$ defined by
\begin{equation}
  \label{eq:exact-F}
  \alpha
  \overset{{\cl{F}}}{\longmapsto}
  \text{ the } (q, u) \text{ solving \eqref{laplaceDP-first-order},} 
\end{equation}
i.e., $\cl F(\alpha) =  (q(x,\pp),u(x,\pp))$ is a  function of $x$ and $\pp$ defined on the spatio-parametric domain $\Omega\times\pdom$. In a learning context such approximations result from minimizing appropriate loss functions.
In constructing such approximations, even though $\cl Q$ is a subset of a vector space of dimension $\dim \cl Q = m_\alpha$, the vector space structure is not used. In fact, in many practical problems, the
parameter domain is a bounded subset of $\cl Q$. We keep $\cl Q$ unbounded in our studies here to make the learning task more challenging.

The approximation to $\cl F$ depends on the particular weak formulation of~\eqref{laplaceDP-first-order} we choose to use. Without specifying the formulation at the moment, suppose 
that we are given a basis $\psi_k$, $k=1, \dots, m_h$, for a finite
element space, built using a mesh of maximal element diameter~$h$ and
that an approximation of
$\cl F(\alpha) =  (q(x,\pp),u(x,\pp))$
is found, for each given $\alpha$ in
$\cl Q$, in their span
\[
  \Xh = \left\{
    \sum_{k=1}^{m_h} v_k\, \psi_k(x) : v_k \in \bb R \right\}
\]
by some method.  Then a computable approximation of the exact $\cl F$
in~\eqref{eq:exact-F}, which we denote by $\Fh$, 
can be implemented as a 
map between Euclidean  vector spaces,
$\mathbb{F}: \mathbb{R}^{m_\alpha} \to \mathbb{R}^{m_h}$. The mapping
of functions $\Fh$ is generated from the mapping $\bb F$ of vectors
by
\begin{equation}
  \label{eq:fun-map}
  \Fh\Big( \sum_{i=1}^{m_\alpha} \alpha_i \chi_i(x)\Big)
  = \sum_{k=1}^{m_h} \,[\bb F(\bbalpha)]_k\, \psi_k(x),
  \qquad x \in \om.  
\end{equation}
where $\bbalpha$ is the vector of coefficients of $\alpha \in \cl Q$, i.e.,
$\bbalpha = (\alpha_1, \alpha_2, \dots)$.
We shall construct such approximate mappings $\Fh$ using neural
networks. Two different choices of the finite element space $\Xh$
(and accompanying loss functions for  training the neural networks)
are discussed in Sections~\ref{sec:fosls-loss-function} and~\ref{sec:dpg-loss-function}.

\begin{figure}
  \centering
  \subcaptionbox
  {Example of a  parameter function $\alpha$ in a space
    $\cl Q$ made with four subdomains
    \label{fig:param-example}
  }[0.45\textwidth]
  {
    \includegraphics[scale=0.3]{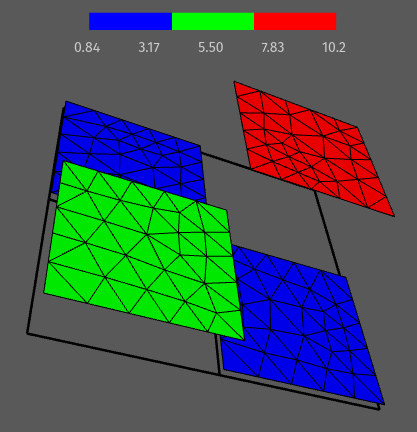}
  }
  \qquad
  \subcaptionbox
  {Subdomains of a square domain $\om$ and an example ordering of
    parameter values $\alpha_i$ \label{fig:alpha_vec_correspondence}
  }[0.45\textwidth]
  {
  \begin{tikzpicture}[x=0.75pt,y=0.75pt,scale=0.8] % yscale=-1,xscale=1]
    % uncomment if require: \path (0,253); %set diagram left start at 0, and has height of 253

    \tikzset{every picture/.style={line width=0.75pt}} %set default line width to 0.75pt        
    
    % Shape: Rectangle [id:dp12239495330688155] 
    \draw   (218,20.44) -- (441.5,20.44) -- (441.5,226) -- (218,226) -- cycle ;
    % Straight Lines [id:da8054693671909524] 
    \draw    (218.24,123) -- (441.26,123) ;
    % Straight Lines [id:da5363446594473421] 
    \draw    (326.89,20) -- (328.32,226) ;

    % Tuple Ordering: bottom left, bottom right, top left and
    % top right subdomains
    
    % Text Node
    \draw (255.,165.) node [anchor=north west][inner sep=0.75pt]  
    % [font=\LARGE]  % Use text font size except for headings
    {$\alpha_{3}$};
    % Text Node
    \draw (370.,165.) node [anchor=north west][inner sep=0.75pt]  
    % [font=\LARGE]
    {$\alpha_{4}$};
    % Text Node
    \draw (255.,63.) node [anchor=north west][inner sep=0.75pt]  
    % [font=\LARGE]
    {$\alpha_{1}$};
    % Text Node
    \draw (370.,63.) node [anchor=north west][inner sep=0.75pt]  
    % [font=\LARGE]
    {$\alpha_{2}$};
    
  \end{tikzpicture} 
  }
  \caption{Illustration of the functions in parameter space $\cl Q$.
    % Example of a  parameter function $\alpha$ in a space
    % $\cl Q$ made with four subdomains
  }
  \label{fig:parameter-pic}
\end{figure}

\subsection{Neural network (NN) architecture}
\label{ssec:neural-network}
  
To construct the map $\mathbb{F}$ using an NN, we
use an architecture proposed in
\cite{BDO},
described next. Let $\rho$ denote a leaky ReLU function
defined component wise by
\[
  \rho: \mathbb R^N \to \mathbb R^N,\qquad
  \rho(y)_i =\max( y_i, 10^{-3} y_i), \qquad y \in \bb R^N,
\]
for some integer $N$.
Next, we select  another integer $r \ll N$, to promote sparsity 
as suggested in \cite{BDO} and
consider 
matrices
$A_{\text{out}} \in \bb R^{m_h \times N},
A_m \in \bb R^{N \times r},
W_m \in \bb R^{r \times N},
A_{\text{in}} \in \bb{R}^{N \times {m_\alpha}},$
and vectors
$b_m \in \bb R^r,
b_{\text{out}} \in \bb{R}^{{m_h}},
b_{\text{in}} \in \bb{R}^{m_\alpha}. 
$
For $m=1, \dots, l$, 
let
\begin{align*}
  & \Phi_m: \mathbb R^N \to \mathbb R^N, 
  && \Phi_m (z) =
     z % \underbrace{z}_{n \times 1}
     + A_m % _{N \times r}
     \rho\big( W_m% \underbrace{W_m}_{r \times N}
     z % \underbrace{z_{\phantom{m}}}_{N \times 1}
     +
     b_m% \underbrace{b_m}_{r \times 1}
     \big),
  \\
  & L_{\text{out}}: \mathbb R^N \to \mathbb R^{m_h}, 
  && L_{\text{out}}(z) =
     A_{\text{out}}% \underbrace{A_{\text{out}}}_{\text{dim}(\mathcal{V}) \times N} \;
     z % \underbrace{z}_{n \times 1}
     +
     % \underbrace{b_{\text{out}}}_{\text{dim}(\mathcal{V}) \times 1}
     b_{\text{out}},
  \\
  & L_{\text{in}} : \mathbb R^{m_\alpha}\to \mathbb R^N,
  && L_{\text{in}}(p) =
     A_{\text{in}}
     % \underbrace{A_{\text{in}}}_{N \times \text{dim}(\mathcal{Q})} \;
     p
     % \underbrace{p}_{\text{dim}(\mathcal{Q}) \times 1}
     +
     % \underbrace{b_{\text{in}}}_{\text{dim}(\mathcal{Q}) \times 1}
     b_{\text{in}},
\end{align*}
for any $z \in \bb R^N $ and $p \in \bb R^{{m_\alpha}}$. Then we approximate
  $\mathbb{F}$ by expressions of the form
  \begin{equation}
    \label{eq:NN-F}
  \mathbb{F}_\theta : \mathbb{R}^{m_\alpha} \to  \mathbb{R}^{m_h}, \qquad
  \mathbb{F}_\theta = L_{\text{out}} \circ \Phi_l \circ \dots \circ \Phi_1 \circ L_{\text{in}}
\end{equation}
(as illustrated in Figure~\ref{fig:NN})
where the subscript $\theta$ represents all the ``trainable'' degrees
of freedom (weights and biases) above, i.e.,
$  \theta = \{A_{\text{in}}, b_{\text{in}}, A_m, W_m, b_m,  b_{\text{out}}, A_{\text{out}}: \; m \in 1, \dots, l\}.$
Let  $\Theta$ denote the set of all such values that  $\theta$ can take.
An appropriate  value of  $\theta$ in $\Theta$
is
found by a training process, as commonly done in the context of
all neural networks.
Let $\Fht : \cl Q \to \Xh$  denote the mapping of functions 
obtained by replacing $\bb F$ by $\bb F_\theta$ in~\eqref{eq:fun-map}.

 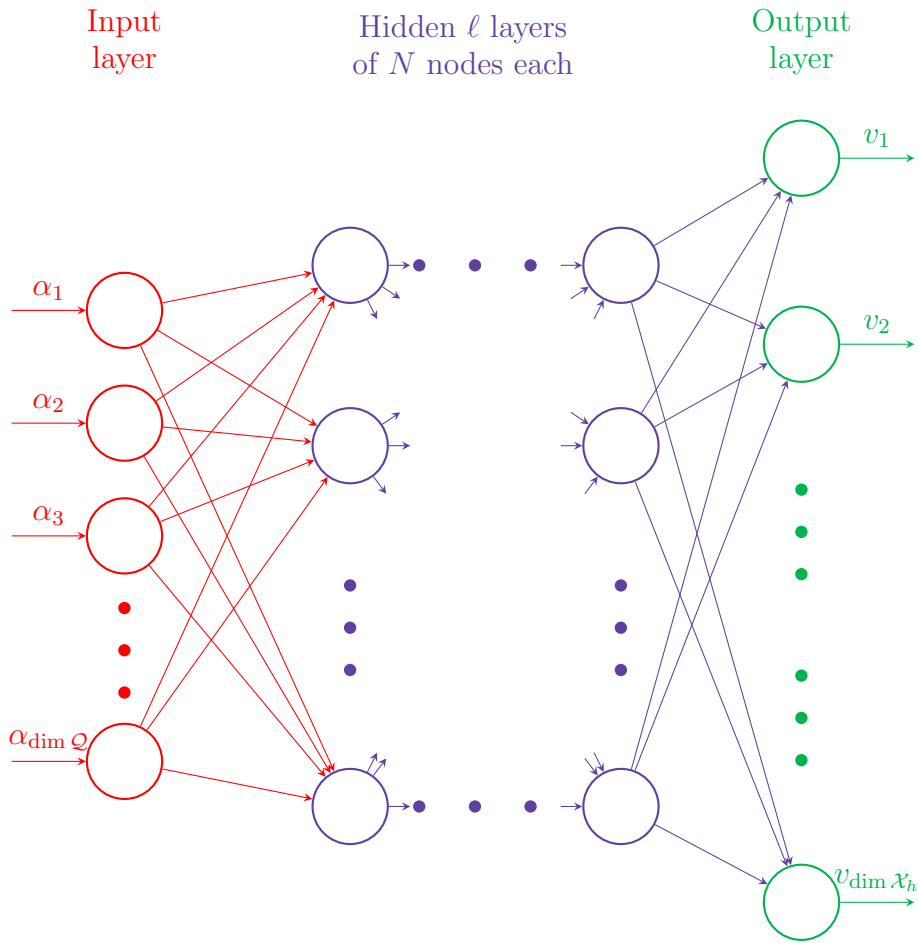
\begin{figure}
   \centering
   \tikzset{%
     hidden neuron/.style={
       circle, brown!50!blue,
       draw, thick,
       minimum size=1cm
     },
     input neuron/.style={
       circle, red, 
       draw, thick,
       minimum size=1cm
     },
     output neuron/.style={
       circle, green!70!blue,
       draw,  thick,
       minimum size=1cm
     },
     neuron missing/.style={
       draw=none, 
       scale=4,
       text height=0.333cm,
       execute at begin node=\color{gray}$\vdots$
     },
     neuron missinginput/.style={
       draw=none, 
       scale=4,
       text height=0.333cm,
       execute at begin node=\color{red}$\vdots$
     },
     neuron missinghidden/.style={
       draw=none, 
       scale=4,
       text height=0.333cm,
       execute at begin node=\color{brown!50!blue}$\vdots$
     },
     neuron missingoutput/.style={
       draw=none, 
       scale=4,
       text height=0.333cm,
       execute at begin node=\color{green!70!blue}$\vdots$
     },
     neuron missinghoriz/.style={
       draw=none, 
       scale=4,
       % text height=0.333cm,
       execute at begin node=\color{brown!50!blue}$\dots$
     },
     neuron missinghblank/.style={
       draw=none, 
       scale=4,
       % text height=0.333cm,
       execute at begin node=\color{white}$\dots$
     },
   }

   \begin{tikzpicture}[x=1.5cm, y=1.5cm, >=stealth]

     % first column: input 
     \foreach \m/\l [count=\y] in {1,2,3,missinginput,4}
     \node [input neuron/.try, neuron \m/.try] (input-\m) at (0,2.5-\y) {};

     % second column: hidden-first
     \foreach \m [count=\y] in {1,2,missinghidden,3}
     \node [hidden neuron/.try, neuron \m/.try] (hidden-\m) at (2,3.5-\y*1.6) {};

     % third column: dots 
     \foreach \m [count=\y] in {missinghoriz,missinghblank,missinghblank,missinghoriz}
     \node [hidden neuron/.try, neuron \m/.try] (hiddenh-\m) at (3.2,3.5-\y*1.6) {};

     % fourth column: hidden-last
     \foreach \m [count=\y] in {1,2,missinghidden,3}
     \node [hidden neuron/.try, neuron \m/.try] (hidden2-\m) at (4.4,3.5-\y*1.6) {};

     % fifth column: output
     \foreach \m [count=\y] in {1,2,missingoutput,missingoutput,3}
     \node [output neuron/.try, neuron \m/.try ] (output-\m) at (6,4.5-\y*1.65) {};

     \foreach \l [count=\i] in {1,2,3,{{\dim\mathcal{Q}}}}
     \draw [<-, red] (input-\i) -- ++(-1,0)
     node [red, above, midway] {$\alpha_{\l}$};

     % \foreach \l [count=\i] in {1,n}
     % \node [above] at (hidden-\i.north) {$H_\l$};

     \foreach \l [count=\i] in {1,2,{\dim\mathcal{X}_h}}
     \draw [->, green!70!blue] (output-\i) -- ++(1,0)
     node [above, midway] {\textcolor{green!70!blue}{$v_{\l}$}};

     \foreach \i in {1,...,4} 
     \foreach \j in {1,...,3}
     \draw [->, red] (input-\i) -- (hidden-\j);

     \foreach \i in {1,...,3}
     \foreach \j in {1,...,3}
     \draw [->,brown!50!blue] (hidden2-\i) -- (output-\j);

     \foreach \i in {1,...,3}
     \foreach \j in {1,...,3} {
       \draw [->,brown!50!blue] (hidden-\i)--($(hidden-\i)!0.8cm!(hidden2-\j)$);
       \draw [<-,brown!50!blue] (hidden2-\i)--($(hidden2-\i)!0.8cm!(hidden-\j)$);
     }
     
     \node [align=center, above] at (0,3.5)
     {\textcolor{red}{Input}
       \\ \textcolor{red}{layer}
     };
     \node [align=center, above] at (3,3.5)
     {\textcolor{brown!50!blue}{Hidden $\ell$ layers}
       \\
       \textcolor{brown!50!blue}{of $N$ nodes each}
     };
     \node [align=center, above] at (6,3.5)
     {\textcolor{green!70!blue}{Output}
       \\
       \textcolor{green!70!blue}{layer}
     };

   \end{tikzpicture}
   \caption{Schematic of neural network architecture of \eqref{eq:NN-F}.
     The input layer ($L_{\text{in}}$) receives parameter values  $\alpha_i$,
     the output layer ($L_{\text{out}}$)
     produces coefficients $v_k$ of finite element basis expansion of a
     solution prediction, and the hidden layers involve the
     transformations~$\Phi_m$.}
   \label{fig:NN}
 \end{figure}

The training process requires a loss function $\cl L \equiv \cl L_\alpha$ that estimates
the error in any approximation generated by the NN at a given parameter value $\pp$, 
\begin{equation*}
   \mathcal{L}: \Xh \to \mathbb{R}.   
\end{equation*}
The calculation of $\cl L(v)$, for any $v \in \Xh$, at a given
parameter value $\pp$, can depend on $\alpha$ even when $v$ is
independent of $\pp$. Although a proper notation indicating so would
be $\cl L_\alpha$, we dispense with the subscript to lighten the
notation.
The training  is carried out through a stochastic minimization process for
\begin{equation}
\label{cL}
\theta^* = \argmin_{\theta \in \Theta} \left ( \frac{1}{M} \sum_{\alpha \in \mathcal{Q}_M} \mathcal{L}(\Fht(\alpha)) \right )
\end{equation}
where $\mathcal{Q}_M$ is a 
collection of $M$ training samples, drawn from a random distribution of functions in 
$\mathcal{Q}$ after endowing it with a probability measure~$\mu$. Here, in terms of computational vectors $\bbalpha$,
the summand above is 
\[
  \mathcal{L}(\Fht(\alpha))
  =
  \mathcal{L}\left(
  \sum_{k=1}^{m_h} \,[\bb F_\theta(\bbalpha)]_{k}\, \psi_k(x)
  \right)
\]
where $\bb F_\theta$ is as defined by the NN in \eqref{eq:NN-F}.

In machine learning jargon, the minimization in~\eqref{cL} is referred
to as {\em empirical risk minimization}.  When a scheme produces
candidate solution approximations $\w(\alpha) \in \Xh$ for each
$\alpha$ in the probability space $\cl Q$ with measure $\mu$, its
``ideal risk'' is defined as the expectation of the random variable
$\cl L \circ \w$, i.e.,
\begin{equation}
  \label{idealloss}
  \LL(\w;\,\pdom):= \int_\pdom \cl L(\w(\pp))\;d\mu(\pp)
  = \mathbb{E}_\mu\big[\cl L(\w(\pp))\big].
\end{equation}
The empirical risk is the analogue computed from the $M$ random samples in $\cl Q_M$
drawn from $\cl Q$, defined by 
\begin{equation}
  \label{eq:9}
  \LL(\w, \cl Q_M):=  \frac{1}{M} \sum_{\alpha \in \mathcal{Q}_M} \mathcal{L}(\w(\alpha)).
\end{equation}
It can be viewed as a Monte Carlo approximation to the ideal risk.
Viewing the NN parameter-to-solution map $\Fht$ as a function on the
probability space $\cl Q$, the ideal risk of the NN model is
$\LL(\Fht;\,\pdom)$.  The objective function being minimized
in~\eqref{cL}, equaling $\LL(\Fht, \cl Q_M)$, is then the empirical
risk, a Monte Carlo approximation to the ideal risk.
Desirable criteria while choosing $\mathcal{L}$ are described in \S\ref{ssec:vc}.

%%%%%%%%%%%%%%%%%%%%%%%%
\subsection{The ``lifted'' problem}\label{ssec:lifted}
%%%%%%%%%%%%%%%%%%%%%%%%%%%%%%%%%%%%%%

To quantify the accuracy of outcomes of the trained model
${\cl{F}_{h,\theta^*}}$ from~\eqref{cL} in relation to the exact
$\cF$, as a function of $x\in \om$ and $\pp\in \pdom$, one needs an
appropriate model-compliant ``continuous reference metric''.  As
suggested in \cite{BDO} and in agreement with \eqref{idealloss}, we
consider the ``lifted'' variational problem for a group variable
$\x (x, \alpha)$ containing the flux and solution components.
as functions on the spatio-parametric domain $\Omega\times\pdom$.
For each $\alpha \in \cl Q$ the solution $\x(\cdot, \alpha)$ is in some trial Hilbert
space~$\cX$ and satisfies a well-posed formulation~\eqref{eq:2} with some trial space $\cl Y$. Generalizing over all parameters  using
$\X:= L_2(\pdom;\cX)$, the lifted formulation seeks
 $\x \in \X$ such that for all
$ \mathfrak{v} \in \Y:= L_2(\pdom;(\cY_\pp)_{\pp\in\pdom})$,
\begin{equation}
    \label{lifted}
    b(\x, \y):= \int_\pdom b_\pp (\x, \y)\; d\mu(\pp)
    = \int_\pdom \ell (\y)\; d\mu(\pp)
\end{equation}
where 
$\mu$ is a given probability measure on $\pdom$.
When the test-spaces $\cY_\pp$  depend on $\pp$, the notation $L_2(\pdom;(\cY_\pp)_{\pp\in\pdom})$ indicates that,  under mild measurability assumptions on the bilinear forms $b_\pp(\cdot,\cdot)$, the space  $\Y$ is indeed a well-defined Hilbert space, a so-called direct integral: see~\cite{BDO}. Moreover, it is shown there that \eqref{lifted} is well-posed if and only if the individual fiber problems~\eqref{eq:2} are
well-posed (in the sense of the Babu\v{s}ka's theorem)
{\em uniformly} in $\pp\in\pdom$.   The reference norm $\|\cdot\|_{\X}$ for the continuous problem is reasonable because it measures
errors
\begin{equation}
    \label{expectation}
\|\x-\w\|^2_{\X}=\int_\pdom \|\x(\cdot,\pp)-\w(\cdot,\pp)\|^2_\cX\;d\mu(\pp)= \mathbb{E}_\mu\big[\|\x(\cdot,\pp)-\w(\cdot,\pp)\|^2_\cX\big]
\end{equation}
as an {\em expectation} over $\pdom$.
This underlines the stochastic nature of the learning problem.

A key consideration  in our approach is the requirement that
\eqref{expectation} be {uniformly proportional} to
the loss \eqref{idealloss} when $\w =\Fht$ ranges over the hypothesis class.
Since we will not be able to assess exact values of \eqref{idealloss}
over the entire parameter range, we use a large (random) test set
$\widehat\pdom_{\rm test}\subset\pdom$ to gauge accuracy.  The errors
in the ideal reference metric $\|\cdot\|_\X$ of \eqref{expectation} have two sources.  The
first is the deviation between the ideal loss and its empirical
counterpart
$ \LL( \Fht, \widehat\pdom_{\rm test})$.
The
second is the error in the {\em discrete Bochner norm}  $\|\x- \Fht\|_{\X(\widehat\pdom_{\rm test})}$ where
$\X(\widehat\pdom_{\rm test}):= \ell_2(\widehat\pdom_{\rm test};\cl X)$.  The former is
often referred to as the {\em generalization error} and depends (among
other things) on the size of sufficiently large test set
$\widehat\pdom_{\rm test}\subset\pdom$.  We do not address the associated 
stochastic estimation concepts (see e.g.~\cite{BCDD})
but concentrate on estimating the  empirical risk
$\|\x- \cl F_{h,\theta}\|_{\X(\widehat\pdom_{\rm test})}$
from corresponding
computable losses
$ \LL( \Fht, \widehat\pdom_{\rm test})$.

%%%%%%%%%%%%%%%%%%%%%%%%%%%%%
\subsection{Variational correctness, reliability, and efficiency}\label{ssec:vc}
%%%%%%%%%%%%%%%%%%%%%%%%%%%%%%%%%%%%%%%%%

As we have seen, classical boundary value problems like~\eqref{laplaceDP-first-order}
admit many different variational formulations of the form~\eqref{eq:2}.  The effectiveness of an approximation $\w \in \cl X$
to the exact solution $\x$ in $\cl X$
can be measured by the norm $\| \x - \w\|_{\cl X}$.  Conforming
finite element methods are based on a variational formulation and use
a finite element subspace $\Xh \subset \cl X$.
Following~\cite{BDO}, we
pursue loss functions that track the error, i.e., $\cl L (\w)$ is
bounded above and below by the true error $\| \x - \w\|_{\cl X},$ scaled
by some $\w$-independent constants, for all candidate
approximations~$\w$.  In the finite element literature, such estimates
are well known for {\it a posteriori} ``error estimators''
which are typically based on {residuals}. Then they do not require knowing the exact solution $\x$ but involve problem data such as source terms whose evaluation may also require some care. We now
borrow the standard terminology for finite element error estimators
into NN loss functions and specify what we mean by their variational
correctness.

\begin{definition}
  \label{def:vc}
  For some class of source functions~$f$ representable in computations, 
  the loss function $\cl L: \Xh \to \bb R$ is said to
  be {\em reliable} in $\Xh$ if there is a $C_1>0$ such that 
  \[
    C_1 \| \x - \w \|^2_{\cl X}  \le \cl L(\w)
  \]
  for all $\w \in \Xh$. Here $\x$ is the exact solution of \eqref{eq:2}.
  If there is a $C_2>0$ such that
  \[
    \cl L(\w) \le C_2\| \x - \w\|^2_{\cl X}
  \]
  for all $\w \in \Xh$, then the loss function $\cl L$ is said to be
  {\em efficient}. A loss function that is both reliable and efficient
  is said to be {\em variationally correct} in $\Xh$.
\end{definition}

When a prediction $\w\in \Xh$ is of the form \eqref{eq:fun-map}, i.e., the expansion coefficients are NNs, the notorious uncertainty of optimization success renders {\it a priori} estimates based on the expressive power of NNs useless. More useful  information on the fidelity of an NN-based prediction of the exact solution $\x$ is drawn from the value of
the loss $\cl L (\w)$.   Then the two-sided bounds of a variationally correct loss
function,
\begin{equation}
  \label{eq:VC}
  C_1 \| \x - \w \|_{\cl X}^2 \le \cl L(\w) \le C_2 \| \x - \w \|_{\cl X}^2, \qquad
  \w \in \Xh,
\end{equation}
show that $\cl L(\w)$ provides an ``error certificate,'' up to the
values of the constants $C_1$ and $C_2$. 

In Definition~\ref{def:vc}, the solution $\x \equiv \x(x, \alpha)$,
the candidate approximation $\w \equiv \w(x, \alpha),$ as well as the
constants $C_1\equiv C_1(\alpha), C_2\equiv C_2(\alpha)$, all depend
on $\alpha$. To indicate this, we often call $\cl L$ the ``fiber loss'' (of the fiber problem) at $\alpha$. Let $\alpha$ vary  in some selected parameter domain $\Qt$ (which is typically a strict compact subset of the $\cl Q$ in~\eqref{eq:Qdefn}). We say that a variationally correct loss function $\cl L$ is {\em robust over $\Qt$} if the constants $C_1$ and $C_2$ do not depend on the parameter  $\alpha$ as it  ranges over  $ \Qt$. Arguably a useful concept of parameter robustness should also
constrain $\| \cdot \|_{\cl X}$ to include a parameter-independent part, but this is hard to prescribe abstractly.

When $\cl L$ is robust over $\Qt$,  it is shown in~\cite{BDO} that \eqref{eq:VC} holds
uniformly for $\pp\in\Qt$ if and only if
\begin{equation}
  \label{eq:VCX}
 C_1 \| \x - \w \|_{\X}^2 \le \LL(\w;\Qt) \le C_2 \| \x - \w \|_{\X}^2, \qquad
  \w \in \X,
\end{equation}  
where
$\LL$ is as in \eqref{idealloss}
and $\|\cdot \|_{\X}$ denotes the Bochner norm of  $\X = L_2(\Qt;\Xh)$, i.e., if
and only if the ``lifted loss''  $\LL$ is variationally correct in $\X$.
Moreover, uniform validity of \eqref{eq:VC} over $\Qt$  implies the two-sided bounds
\begin{equation}
  \label{eq:VCX}
 C_1 \| \x - \w \|_{\X(\Qt_M)}^2 \le \LL(\w;\Qt_M) \le C_2 \| \x - \w \|_{\X(\Qt_M)}^2, \qquad
  \w \in \X(\Qt_M) ,
\end{equation}  
for any
$\Qt_M\subset \Qt$ of $M$ samples, and
for the  corresponding computable quantities, involving the discrete
Bochner norm of $\X(\Qt_M)= \ell_2(\Qt_M;\Xh)$ and the empirical risk
defined in \eqref{eq:9}. Thus, variational correctness of $\cl L$ over
$\Xh$ and robustness of $\cl L$ over $\Qt$ ensures variational
correctness of the lifted loss $\LL$ over $\X = L_2(\Qt, \Xh)$ and
ensures that the computable $ \LL(\w;\Qt_M)$ provides rigorous error
certificates over {\em any} test set of random samples
$\Qt_M\subset \Qt$. Inference on the continuous counterpart
$ \LL(\w;\Qt)$ is then a matter of statistical estimation when $M$
increases.

Two-sided inequalities like~\eqref{eq:VC} have been developed extensively
for error estimators in the finite element literature~\cite{AisnwOden00}.  When the
source $f$ is not represented exactly in the finite element
calculations, one can only expect a modified version of~\eqref{eq:VC}
to hold with additional data representation errors (often also called
data oscillation terms). In our numerical studies  we fix $f$ to a simple function
(without the need for such terms) to focus exclusively on $\alpha$
dependence. While some finite element error estimators admit 
bounds like~\eqref{eq:VC} only when $\w$ is a Galerkin approximation, note that we are requiring~\eqref{eq:VC} to hold for any $\w \in \Xh$.

In the next two sections, we proceed to discuss two variationally
correct loss functions $\cl L$ compared in this paper. They are based on the variational formulations \eqref{weakA1} and \eqref{weakA2}. In both cases,
we will need the following inequality to establish variational
correctness: there exists (an $\alpha$-dependent) $c_\alpha>0$ such
that
\begin{equation}
\label{eq:A-bdd-below}
\| (r, w)\|_{L_2} \le c_\alpha \| A_\alpha (r, w) \|_{L_2},
\qquad
(r, w) \in \Hdiv \times \Ho.
\end{equation}
Clearly, this inequality is independent of the finite element
discretization and the specifics (including the norms and the spaces)
of different variational formulations used to
treat~\eqref{laplaceDP-first-order}.  The
inequality~\eqref{eq:A-bdd-below} can be proved using the theory of
mixed systems~\cite[Ch.~II, Prop.~1.3]{BrezzForti91}. A proof can be
found in many places, such as~\cite{BocheGunzb09a},
\cite[Theorem~3.1]{FOSLS_coercivity}, or
\cite[Lemma~4.4]{DemkoGopal11a}, and it holds provided that $\alpha$ 
satisfies the so called ellipticity condition, i.e., there
exists two finite positive constants
$\alpha_{\text{min}}, \alpha_{\text{max}} > 0$ such that
\begin{equation}
    \label{eq:ellipticity_condition}
    \alpha_{\text{min}} \le \alpha(x) \le \alpha_{\text{max}} 
    \quad \text{ a.e. } x \in \Omega.
\end{equation}
Then one can make explicit the $\alpha$-dependence 
of $c_\alpha$, namely, 
there exists~\cite[Lemma~4.4]{DemkoGopal11a}
some $\alpha$-independent constant 
$c_0 >0$ such that
\begin{equation}
  \label{eq:calpha}  
  % c_\alpha = c_0 \frac{1}{\alpha_{\text{min}}} \max \left\{ \frac{1}{\alpha_{\text{min}}}, \alpha_{\text{max}}\right \}.
  c_\alpha = c_0 \left(1 +
    \frac{\max(1, \alpha_{\text{max}}) }{\alpha_{\text{min}}}
  \right).
\end{equation}
One often also uses the graph norm generated by the operator
$A_\alpha$, namely 
$( \| \w \|_{L_2}^2 + \| A_\alpha \w \|_{L_2}^2)^{1/2}, $ for any
$\w = (r, w) \in \Hdiv \times H^1.$ By triangle inequality,
\begin{align*}
  \| \nabla w \|_{L_2} + \|\div r \|_{L_2} 
  & \le
  \| \alpha r + \nabla w \|_{L_2}
  + \| \alpha r \|_{L_2}
  + \| \div r \|_{L_2} 
  \\
  & \le \| A_\alpha (r, w) \|_{L_2}
  +{\overline\alpha} \|  r \|_{L_2},
\end{align*}
showing that
\begin{equation}
  \label{eq:graph-bound}
  \| \w \|_{\Hdiv \times H^1}^2 \lesssim
  \| \w \|_{L_2}^2 + \| A_\alpha \w \|_{L_2}^2.  
\end{equation}
Here and throughout, when two quantities $A$ and $B$ admit a bound
$A \le C B$ for some constant $C>0$ which may depend on $\alpha$ but
is independent of~$h$, we write $A \lesssim B$, and we write
$A \sim B$ to mean that both $A \lesssim B$ and $B \lesssim A$
hold. It is  easy to see that
\begin{equation}
  \label{eq:Ydpg-equiv}
  \| \w \|_{L_2}^2 + \| A_\alpha \w \|_{L_2}^2
  \;\sim\; \| \w \|_{\Hdiv \times H^1}^2,
  \qquad \w \in \Hdiv \times H^1,
\end{equation}
since the reverse bound of~\eqref{eq:graph-bound} can also be
established using the triangle inequality.

\section{The FOSLS loss function}
\label{sec:fosls-loss-function}

\subsection{The FOSLS solution}

We consider the FOSLS method~\cite{FOSLS_coercivity}, based on \eqref{eq:5} or 
\eqref{weakA1}, in its lowest
order form. Since $A_\alpha$ as an operator in~\eqref{Adef} is onto, we can symmetrize the variational formulation in \eqref{eq:5} by writing every test function $\y$  there as $A_\alpha \w$ for some trial function $\w$ to get the  following equivalent FOSLS formulation: find $\x \in \Xfosls$ satisfying
\begin{subequations}
  \label{eq:fosls-formulation}
  \begin{equation}
    \label{eq:fosls-formulation-eq}
      b_\alpha^\fosls(\x, \w) = \ell^\fosls(\w), \qquad \forall \w \in \Xfosls,
  \end{equation}
where
\begin{gather}
  \Xfosls = \Hdiv \times \Ho, \qquad
  \| (r, w) \|_{\Xfosls}^2 = \| r \|_{\Hdiv}^2 + \| w \|_{\Ho}^2.
  \\
  b_\alpha^\fosls(\x, \w) := (A_\alpha \x, A_\alpha \w)_{L_2}, \qquad
        \ell^\fosls(\w) := (F, A_\alpha \w)_{L_2}.
\end{gather}
\end{subequations}
Here  $F = (0, f)$ and $(\cdot, \cdot)_{L_2}$ denotes the inner
product in $L_2$ or its Cartesian products.

The lowest order FOSLS scheme  sets the finite element space $\Xh$ to a subspace of
$\Xfosls$ described now.  Let $\oh$ be a conforming simplicial
finite element mesh of $\om$, where $h$ denotes the maximal diameter
of elements in the mesh. Let $\cl{P}_k$ denote the space of
piecewise polynomials of degree at most $k$ on each element of
$\Omega_h$ (without any inter-element
continuity conditions). Then the lowest order Raviart-Thomas
subspace~\cite{RaviaThoma77} of $\Hdiv$ is
\begin{subequations}
\label{eq:RT0-Lag1}  
\begin{equation}
  \label{eq:RT0}
  \RT% (\Omega_h)
= \left [ \mathcal{P}_0^d + x
  \mathcal{P}_0 \right ] \cap \Hdiv.  
\end{equation}
Using also the lowest order Lagrange space on the same mesh,
\begin{equation}
  \label{eq:Lag1}
  \cl U_1 = \cl{P}_1 \cap {\Ho}  
\end{equation}
\end{subequations}
the FOSLS method sets the finite element space $\Xh$ to
\[
  {\Xhfosls} = \RT \times \cl U_1.
\]
Given a diffusion parameter $\alpha$, the FOSLS approximation to the
exact solution $\x=\x(\cdot,\pp) = (q(\cdot,\pp), u(\cdot,\pp))$ in $\Hdiv \times {\Ho}$ satisfying \eqref{laplaceDP-first-order}
is the unique $\x_h \in {\Xhfosls}$ satisfying
\begin{equation}
  \label{eq:FOSLS-method}
  (A_\alpha  \xfosls, A_\alpha  \w)_{L_2} = ( F, A_{\alpha}  \w)_{L_2},
  \qquad  \text{ for all }  \w \in   {\Xhfosls},
\end{equation}
where $F = (0, f)$ and $A_\alpha$ is as in~\eqref{Adef}. The inner
product in $L_2$ or its Cartesian products is denoted above and
throughout by $(\cdot, \cdot)_{L_2}$ and the corresponding norm by
$\| \cdot \|_{L_2}$.  The evaluation of the FOSLS
parameter-to-solution map for each given $\pp$
\[
  \alpha \overset{\ffosls}{\longmapsto} \xfosls(\pp), \qquad \ffosls(\alpha) = \xfosls
\]
can be obtained by solving~\eqref{eq:FOSLS-method}.
Note that the set of equations contained in~\eqref{eq:FOSLS-method} for
each test function is the same as the set of normal equations for the 
minimization problem
\[
  \min_{ \w \in {\Xhfosls}} \|A_\alpha  \w -  F\|^2_{L_2}.
\]
Thus, for each $\pp\in \pdom$, the FOSLS solution can be characterized as the following unique minimizer:
\[
  \ffosls(\alpha) = \xfosls = \argmin_{ \w \in {\Xhfosls}} \|A_\alpha  \w -  F\|^2_{L_2}.
\]

\subsection{Variational correctness of the FOSLS loss function}

For the fiber problem at any $\alpha \in \cl Q$, 
the ``FOSLS loss function'' is defined by
\begin{equation}
  \label{eq:fosls-loss}
  \Lfosls(\w)
  := \|A_\alpha  \w -  F\|^2_{L_2}, \qquad \w \in {\Xhfosls}.
\end{equation}
The FOSLS parameter-to-solution map is found by performing the
minimization in~\eqref{cL} with $\cl L$ set to $\Lfosls$
and $\Xh$ set to $\Xhfosls$.
As previously mentioned, 
we refer to this  residual in \eqref{eq:fosls-loss}
as ``fiber loss.’’ Note that it explicitly depends on  $\alpha$ through the
operator $A_\alpha$ (even though to avoid notational clutter we have chosen to write $\Lfosls$ instead of $\Lfosls_\alpha$).
The next proposition is essentially contained in
\cite{FOSLS_coercivity} but we include a short proof for completeness.

\begin{proposition}
  \label{prop:Lfosls_VC}
  For each $\pp\in\pdom$,  $\Lbdo$ is variationally correct in
  $\Xhfosls$, i.e., there are $\alpha$-dependent constants $C_1, C_2>0$
  such that
  \begin{equation}
    \label{eq:fosls-2-sided}
    C_1\| \x(\alpha) - \w \|_{\Xfosls}^2
    \le \Lbdo(\w) \le C_2 \| \x(\alpha) - \w \|_{\Xfosls}^2,
  \end{equation}
  for all $\w \in \Xfosls.$ Here
  $\x(\pp) = (q(\pp), u(\pp))\in \Xfosls $ is the exact solution
  of~\eqref{laplaceDP-first-order} found using \eqref{weakA1} or
  equivalently \eqref{eq:fosls-formulation}.
\end{proposition}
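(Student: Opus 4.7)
The plan is to leverage the exact solution identity $A_\alpha \x(\alpha) = F$ in $L_2$ to rewrite the loss as
\[
  \Lfosls(\w) = \|A_\alpha \w - F\|_{L_2}^2 = \|A_\alpha(\w - \x(\alpha))\|_{L_2}^2.
\]
Setting $\z = \w - \x(\alpha) \in \Xfosls$, it then suffices to establish the two-sided equivalence
$\|A_\alpha \z\|_{L_2}^2 \sim \|\z\|_{\Xfosls}^2$ for all $\z \in \Xfosls = \Hdiv \times \Ho$.

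For the upper bound (efficiency), I would use the explicit form of $A_\alpha$ in~\eqref{LaplaceDP_OPform} and the triangle inequality together with boundedness of the multiplication by $\alpha$ (using the upper ellipticity bound $\alphamax$ from~\eqref{eq:ellipticity_condition}): this directly yields
\[
\|A_\alpha \z\|_{L_2}^2 \lesssim (1+\alphamax^2)\,\|\z\|_{\Hdiv\times H^1}^2,
\]
which gives the constant $C_2$ and makes its $\alpha$-dependence explicit.

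For the lower bound (reliability), I would combine the two ingredients already stated in the excerpt. First, inequality~\eqref{eq:A-bdd-below} gives $\|\z\|_{L_2} \le c_\alpha \|A_\alpha \z\|_{L_2}$ with $c_\alpha$ explicit as in~\eqref{eq:calpha}. Second, the graph norm equivalence~\eqref{eq:Ydpg-equiv} gives $\|\z\|_{\Xfosls}^2 \lesssim \|\z\|_{L_2}^2 + \|A_\alpha\z\|_{L_2}^2$. Chaining the two,
\[
\|\z\|_{\Xfosls}^2 \;\lesssim\; (c_\alpha^2 + 1)\,\|A_\alpha \z\|_{L_2}^2,
\]
so $C_1$ can be taken proportional to $(c_\alpha^2 + 1)^{-1}$.

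Substituting $\z = \w - \x(\alpha)$ and rearranging yields~\eqref{eq:fosls-2-sided}. There is no real obstacle here since both key inequalities are quoted results; the main point worth care is simply tracking the $\alpha$-dependence in $C_1$ and $C_2$, which is the whole purpose of restating this proposition (originally from~\cite{FOSLS_coercivity}) in the present parameter-dependent framework. In particular, since $c_\alpha$ in~\eqref{eq:calpha} blows up as $\alphamin \to 0$ or $\alphamax \to \infty$, the reliability constant $C_1$ degrades precisely in the high-contrast regime, foreshadowing the robustness issues that motivate the DPG loss function in the following section.
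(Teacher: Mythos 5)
Your proposal is correct and follows essentially the same route as the paper's proof: both reduce to $\Lfosls(\w)=\|A_\alpha(\w-\x)\|_{L_2}^2$, obtain reliability by chaining~\eqref{eq:A-bdd-below} with the graph-norm equivalence~\eqref{eq:Ydpg-equiv} to get a constant of the form $(c_\alpha^2+1)^{-1}$, and obtain efficiency from the triangle-inequality bound $\|A_\alpha\z\|_{L_2}\lesssim \max\{1,\alphamax\}\|\z\|_{\Xfosls}$. Your closing remark on the degradation of $C_1$ in the high-contrast regime matches the discussion the paper gives immediately after the proposition.
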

\begin{proof}
  First note that 
    Since $\x=\x(\pp)$ is the exact solution,  $F = A_\alpha \x$ and
    $\Lbdo( \w) = \| A_\alpha (\w - \x) \|^2_{L_2}$ for any $\w$ in
    $\Xhfosls$. Then~\eqref{eq:A-bdd-below} implies that
    \[
      \| \x - \w\|_{L_2}^2 + \| A_\alpha( \x-\w) \|_{L_2}^2
      \le
      (c^2_\alpha+1) \, \| A_\alpha(\x - \w) \|_{L_2}^2
      =
      (c^2_\alpha+1) \,  \Lbdo(\w).
    \]
    Hence by \eqref{eq:Ydpg-equiv}, 
    \[
      \| \x - \w\|^2_{\Xfosls} \lesssim  \,\Lbdo( \w)
    \]
    for all $\w \in \Xfosls$, showing reliability of the FOSLS loss
    function (the lower inequality of \eqref{eq:fosls-2-sided}).
    Furthermore, a simple application  of triangle inequality
    shows that
    \begin{equation}
    \label{eq:A-bdd-above}
        \|A_\pp\w\|_{L_2} \le \,\max \{ 1, \alpha_{\text{max}} \} \, \|\w\|_{\Xfosls},
      \end{equation}
      for all $\w \in \Xfosls.$
      Thus, $\Lbdo$ is also efficient.
\end{proof}

Although FOSLS loss is variationally correct, it is not robust over
the unbounded parameter domain $\cl Q$ in \eqref{eq:Qdefn}.  Let $a$
and $b$ be two positive constants and consider the bounded parameter
domain
\begin{equation}
  \label{eq:Qab}
  \cl Q^{[a,b]} = \{ \alpha \in \cl Q: a \le  \min_{x \in \om} \alpha(x), \;
  \max_{x \in \om } \alpha(x) \le  b \}.  
\end{equation}
From the proof of Proposition~\ref{prop:Lfosls_VC} it is clear that
the FOSLS loss is robust over the bounded parameter domain
$\cl Q^{[a, b]}$. Indeed the constants $C_1$ and $C_2$ produced in the
proof depend only on $\alpha_{\text{max}}$ and $c_\alpha$ (which can 
depend also on $\alpha_{\text{min}}$ per \eqref{eq:calpha}), so $C_1$ and $C_2$
can also be expressed
using only $a$ and $b$, making them independent of
$\alpha \in \cl Q^{[a, b]}$.  As a consequence, per the discussion in
Section~\ref{ssec:vc}, the lifted loss function
\begin{equation}
    \label{FOSLSLoss}
    \LL^\fosls(\w;\pdom^{[a,b]}_M):=
    \frac 1M \sum_{\pp\in \pdom^{[a,b]}_M}
\Lbdo(\w(\alpha))
\end{equation}
is variationally correct for any set of $M$ samples $\pdom^{[a,b]}_M\subset\pdom^{[a,b]}$
and admits two-sided bounds as in \eqref{eq:VCX} with efficiency and
reliability constants depending solely on $a$ and $b$.

\section{The DPG  loss function}
\label{sec:dpg-loss-function}

In this section we introduce the ultraweak DPG method
for~\eqref{laplaceDP-first-order}. This method was first presented in
\cite{DemkoGopal11a} and put in a larger context in the recent
review~\cite{DPGacta}. After presenting the discrete method, we present the DPG
loss function and discuss its variational correctness.

\subsection{The DPG solution.}

The ultraweak DPG formulation can be described for very general PDEs,
in discretized and undiscretized forms, and in arbitrary order spaces 
(see~\cite[Section~7]{DPGacta}).
It is a mesh-dependent variational formulation
in so-called ``broken graph spaces'' whose unbroken  counterpart was already given in Example~\ref{eg:uw}. For the  discretization,
we restrict ourselves here to the simplest
lowest order case for transparent presentation of the ideas.  Let
$\tr(w)$ and $\trn(r)$ denote element-wise trace operators defined by
$\tr(w)|_{\d K} = w|_{\d K}$ and
$\trn(r)|_{\d K} = (r \cdot n )|_{\d K}$.  Here and throughout $n$
denotes the outward unit normals on element boundaries.  Using the
spaces defined in \eqref{eq:RT0-Lag1},
define the following  spaces on element boundaries by
\begin{align*}
  \hU_1
  & = \{ \hat w: \; \hat w|_{\d K} = w|_{\d K}
    \text{ for some } w \in \cl{U}_1 \text{ for all } K \in \oh
    \}
  \\
  {\hRT}
  & = \{ \hat q_n: \; \hat q_n|_{\d K} = (r\cdot n)|_{\d K}
    \text{ for some }  r \in \RT \text{ for all } K \in \oh
    \}.
\end{align*}
Since the lowest order spaces $\cl U_1$ and $\RT$ are completely
defined by their element-wise traces, the spaces $\hU_1$ and ${\hRT}$
are isomorphic to $\cl U_1$ and $\RT$, respectively. We write a
generic element of ${\hRT}$ as $\hat r \cdot n$ or $\hat r_n$
to indicate its dependence on $n$.

The lowest order ultraweak
DPG method uses distinct trial  and test
spaces,
\[
\Xhdpg = \cl P_0^d \times \cl P_0 \times \hU_1 \times {\hRT},
\qquad
\Yhdpg = \cl P_2^d \times \cl P_{d+1}.
\]
Recall the formal adjoint of $A_\alpha$, given by
\eqref{eq:Aadjoint}.
As in Example~\ref{eg:uw}, 
it is used to  norm the test space by 
\begin{equation}
  \label{eq:Y-dpg-norm}
  \| (\tau, \nu) \|_{\cl Y}^2  =
  \sum_{K \in \oh}
  \Big( \| A_\alpha^*(\tau,\nu) \|_{L_2(K)}^2 + \| (\tau, \nu) \|_{L_2(K)}^2
  \Big)
\end{equation}
for all $(\tau, \nu) \in \Yhdpg$.
Note that the differential operators within $A_\alpha^*$ are applied
element by element when computing  the norm~\eqref{eq:Y-dpg-norm}, which is
referred to as a ``broken graph norm''.
The corresponding inner
product is denoted by $(\cdot,\cdot )_{\cl Y}$ and it is an essential
ingredient in computations with the DPG method.  Different flavors of
the method are obtained by changing the $\cl Y$ inner product, as we
shall see later. 

Define the DPG bilinear form by 
\begin{align}
  \label{eq:dpg-b-discrete}
  b_\alpha^\dpg((q,u,\hat u, \hat q\cdot n),
    (\tau,\nu))
    & =
      \sum_{K \in \oh} \left( 
      \int_K (q, u) \cdot  A_\alpha^*(\tau, \nu)
      + \int_{\d K}
      \hat u  \,\tau \cdot n
      + \int_{\d K}
      \hat q \cdot n \,\nu \right),
\end{align}
for any $(q, u, \hat u, \hat q\cdot n ) \in \Xhdpg$ and
$(\tau, \nu) \in \Yhdpg.$  We omit the standard
Lebesgue measure in integrals such as the above
when there can be no confusion.
The DPG method can be presented as
a Petrov-Galerkin formulation or equivalently as a mixed Galerkin
method. We only present the latter (and refer the reader to
\cite{DPGacta} for other equivalent forms). Given an $F$ in
$L_2(\om)^d \times L_2(\om)$ (which, as before, will be set to $(0, f)$
with $f$ as in~\eqref{laplaceDP-first-order}), the DPG method produces
an approximate solution $ \xdpg = (q_h^\dpg, u_h^\dpg, \hat u_h^\dpg, \hat q_h^\dpg)$ in $\Xhdpg$
as well as a built-in approximate error representation $e_h$ in
$\Yhdpg$, that together satisfy the equations 
\begin{subequations}
  \label{eq:DPG-method}
  \begin{alignat}{4}
    \label{eq:DPG-method-1}
  (e_h, y)_{\cl Y}
  \;+\;
  &
    b_\alpha^\dpg(\xdpg , y)
  & \,=\,
  & (F,y)_{L_2},
    \qquad 
  & y \in \Yhdpg, 
  \\ \label{eq:DPG-method-2}
  & b_\alpha^\dpg(\w, e_h)
  & \,=\,
  & 0, \qquad 
  & \w \in \Xhdpg.
\end{alignat}
\end{subequations}
This generates  the DPG parameter-to-solution map
\[
  \alpha \overset{\fdpg}{\longmapsto} \xdpg,
  \qquad \fdpg(\alpha) = \xdpg,\quad
\]
for any $\pp\in\pdom$, i.e., 
the application of $\fdpg$ to any $\alpha$ is obtained by
solving~\eqref{eq:DPG-method}. We want to approximate $\fdpg$ by a NN.

\subsection{The DPG loss function}

The DPG loss function is computed in two steps. First, given any
$\w \in \Xhdpg$, we compute a residual representation $\vepzh \in \Yhdpg$ by solving 
\begin{subequations}
  \label{eq:DPG-loss}
  \begin{equation}
    \label{eq:DPG-loss-prep}
    (\vepzh, \y)_{\cl Y} = (F, \y)_{L_2} - b_\alpha^\dpg (\w, \y), \qquad
    \text{ for all } \y \in \Yhdpg.
  \end{equation}
Note that this is the same equation as~\eqref{eq:DPG-method-1} with
$e_h$ and $\xdpg$ replaced by $\vepzh$ and $\w$, respectively. 
As before we focus first on  parameter-dependent DPG fiber loss of $\w$ at $\pp$, which,  using $\vepzh$, we define by
\begin{equation}
  \label{eq:DPG-loss-fun}
  \Ldpg(\w) := \| \vepzh\|_{\Ydpg}^2, \qquad \w \in \Xhdpg,
\end{equation}
\end{subequations}
where the $\alpha$-dependent $\Ydpg$ norm is as defined in~\eqref{eq:Y-dpg-norm}.  
We can also write this loss in operator form. One way to do so quickly
is to introduce the mappings
$R_{\alpha, h}: \Yhdpg \to (\Yhdpg)'$ and $B_{\alpha,h}^\dpg : \Xhdpg \to (\Yhdpg)'$,  
defined by $(R_{\alpha, h} y)(z)  = (y, z)_{\cl Y}$ and
$(B_{\alpha,h}^\dpg \w)( y)  = B_{\alpha,h}^\dpg(\w, y)$ for
all $\w \in \Xhdpg$ and $y, z \in \Yhdpg$.  Then~\eqref{eq:DPG-loss-prep}
can be expressed as 
\[
  R_{\alpha, h} \vepzh + B_{\alpha,h}^\dpg \w  = F
\]
and~\eqref{eq:DPG-loss-fun} becomes
\begin{equation}
  \label{eq:DPG-loss-operator-form}
  \Ldpg(\w) = \| R_{\alpha, h}^{-1} (F - B_{\alpha,h}^\dpg \w) \|_{\cl{Y}}^2,
  \qquad \w \in \Xhdpg.
\end{equation}
The key feature that makes this fiber loss  function quickly computable is
the block diagonal nature of the matrix of $R_{\alpha, h}$ (representing the
Riesz map), with one block per element (so its inverse appearing above
can be computed locally element by element).  This is a consequence of
the fact that functions in $\Yhdpg$ have no inter-element continuity
constraints.

\subsection{Variational correctness of the DPG loss function}

The variational setting   which the ultraweak DPG method is based upon is
more complicated than that of FOSLS. First  consider the trial space. 
To describe the norm in an infinite-dimensional Hilbert space 
$\Xdpg$ enclosing the discrete computational trial  space $\Xhdpg$, let
\[
  \text{tr} : \Ho \longrightarrow \prod_{K \in \Omega_h} H^{1/2}(\partial K),\quad
  (\text{tr } u) \big\rvert_{\partial K} = u \big\rvert_{\partial K},
\]
and
\[
  \text{tr}_n : \Hdiv \longrightarrow  \prod_{K \in \Omega_h} H^{-1/2}(\partial K),
  \quad
  (\text{tr}_n r)\big\rvert_{\partial K} = r \cdot n \big\rvert_{\partial K}, 
\]
denote element-wise boundary trace operators extended to the indicated 
global Sobolev spaces.  We define norms of trace and flux functions on 
element boundaries using their preimages of the appropriate trace map by
\[
  \|\hat u\|_{{\Hoh}}
  := \inf_{u \in \text{tr}^{-1}\{\hat u\}} \|u\|_{\Ho},
  \qquad
  \|\hat r_n\|_{H^{-1/2}(\partial \Omega_h)}
  := \inf_{ q \in \text{tr}_n^{-1}\{\hat r_n\}} \| q\|_{H(\div)}
\]
and set $\Hoh$ and $H^{-1/2}(\d\oh)$ to be the ranges of
$\text{tr}(\cdot)$ and $\text{tr}_n(\cdot)$, respectively.
The well-posedness of
ultraweak DPG formulation can be proven in the trial norm
defined by 
\begin{equation}
  \label{eq:Xdpg-norm}
  \| (r, w, \hat w, \hat r) \|_{\Xdpg}^2
  =
  \| r \|_{L_2}^2 + \| w \|_{L_2}^2
  + \|\hat w \|_{\Hoh}^2
  + \| n \cdot \hat r\|_{H^{-1/2}(\d\oh)}^2.  
\end{equation}
The DPG  test space is set to the product of the following spaces
\[
  \Hdivoh = \prod_{K \in \oh} H(\div, K), \qquad
  H^1(\oh) = \prod_{K \in \oh} H^1(K),
\]
representing piecewise $H(\div)$ and $H^1$ spaces (containing functions
without any interlement continuity constraints). With these definitions,
we can now give the undiscretized DPG formulation, representing the fiber problem at some given parameter $\alpha$, for
the exact solution with  four components
\begin{subequations}
  \label{eq:dpg-undiscretized-formulation}
\begin{equation}
  \label{eq:exact-dpg}
\x^\dpg = (q, u, \hat u, \hat q_n).
\end{equation}
Here  $(q, u)$ is the exact solution of~\eqref{laplaceDP-first-order}, $\hat u = \tr(u)$ and $\hat q_n = \trn(q)$. It is well known that $\x^\dpg$ is the unique function in $\Xdpg$ satisfying 
\begin{equation}
  \label{eq:12}
  b_\alpha^\dpg(\x^\dpg, \y) = \ell^\dpg(\y), \qquad \forall \y \in \cl Y^\dpg,
\end{equation}
where 
\begin{align}
    & \Xdpg
    = L_2(\om)^d \times L_2(\om) \times \Hoh\times  H^{-1/2}(\d\oh),
    \quad \text{ normed by \eqref{eq:Xdpg-norm},}
    \\
    & \cl Y^\dpg
    = \Hdivoh \times H^1(\oh),
    \quad \text{ normed by broken graph norm \eqref{eq:Y-dpg-norm},}
    \\
    & b_\alpha^\dpg((q,u,\hat u, \hat q\cdot n),
    (\tau,\nu))
    =
      \sum_{K \in \oh} \bigg( 
      \ip{ \tau\cdot n, \hat u}_{\d K} + \ip{ \hat q \cdot n, \nu}_{\d K} +
      \int_K (q, u) \cdot  A_\alpha^*(\tau, \nu)
      \bigg)
    \\
    & \ell^\dpg((\tau,\nu)) = (f, \nu)_{L_2},
  \end{align}
\end{subequations}
where $\ip{\cdot,\cdot}_{\d K}$ denotes the duality pairing between $H^{-1/2}(\d K)$ and $H^{1/2}(\d K)$.
Note that $\Yhdpg \subset \cl Y$ and  $\Xhdpg \subset \Xdpg.$ Since the exact problem has zero error, with a zero error representation $e=0$, we can expand
\eqref{eq:12} into a mixed method of the form \eqref{eq:DPG-method} to  see that 
\eqref{eq:DPG-method} is indeed a conforming discretization of the exact  formulation~\eqref{eq:dpg-undiscretized-formulation}.
The following result is
obtained by putting together known results from DPG
analysis~\cite{CarstDemkoGopal14, DPGacta}.

\begin{proposition}
  \label{prop:VC-DPG}
  The DPG loss function $\Ldpg$ is variationally correct in $\Xhdpg$
  when $f$ is in $\cl P_0$, i.e., there are $\alpha$-dependent
  constants $C_1, C_2>0$ (that depend on $c_\alpha$ in
  \eqref{eq:A-bdd-below}) such that
  \[
    C_1 \| \x^{\dpg} - \w \|_{\Xdpg}^2
    \le \Ldpg(z) \le C_2 \| \x^{\dpg} - \w \|_{\Xdpg}^2,    
  \]
  for all $\w \in \Xdpg.$ Here $\x^{\dpg} = (q, u, \hat u, \hat q_n)$ is the
  exact solution of~\eqref{eq:dpg-undiscretized-formulation}.
\end{proposition}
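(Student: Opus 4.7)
My plan is to interpret the loss as the square of a discrete dual norm of the residual, then obtain efficiency from the continuity of $b_\alpha^{\dpg}$ and reliability from the continuous DPG inf-sup estimate combined with a Fortin-type operator. The first observation is that, by Riesz representation in $(\Yhdpg,(\cdot,\cdot)_{\cl Y})$, the defining equation \eqref{eq:DPG-loss-prep} yields
\[
\Ldpg(\w)^{1/2}=\|\vepzh\|_{\cl Y}=\sup_{\y\in\Yhdpg\setminus\{0\}}\frac{(F,\y)_{L_2}-b_\alpha^{\dpg}(\w,\y)}{\|\y\|_{\cl Y}}.
\]
Because the exact solution $\x^\dpg$ of \eqref{eq:dpg-undiscretized-formulation} satisfies \eqref{eq:12} and $\Yhdpg\subset\Ydpg$, the numerator equals $b_\alpha^{\dpg}(\x^\dpg-\w,\y)$, so the loss is precisely the squared discrete residual norm.

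For the upper bound I would combine the inclusion $\Yhdpg\subset\Ydpg$ with the continuity of $b_\alpha^\dpg$ on $\Xdpg\times\Ydpg$,
\[
\Ldpg(\w)^{1/2}\le \sup_{\y\in\Ydpg\setminus\{0\}}\frac{b_\alpha^{\dpg}(\x^\dpg-\w,\y)}{\|\y\|_{\cl Y}}\le M_\alpha\,\|\x^\dpg-\w\|_{\Xdpg}.
\]
The continuity constant $M_\alpha$ follows from pairing the interior term via $|(\x,A_\alpha^*\y)_{L_2(K)}|\le\|\x\|_{L_2(K)}\|A_\alpha^*\y\|_{L_2(K)}\le\|\x\|_{L_2(K)}\|\y\|_{\cl Y}$, and the boundary duality pairings $\ip{\tau\cdot n,\hat u}_{\d K}$, $\ip{\hat q_n,\nu}_{\d K}$ by the minimum-energy definition of the trace norms in \eqref{eq:Xdpg-norm} and the estimates $\|\tau\|_{H(\div,K)}, \|\nu\|_{H^1(K)}\lesssim\|\y\|_{\cl Y}$.

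For the lower bound I would first apply the continuous DPG inf-sup inequality for \eqref{eq:dpg-undiscretized-formulation},
\[
\|\x^\dpg-\w\|_{\Xdpg}\le\gamma_\alpha^{-1}\sup_{\y\in\Ydpg\setminus\{0\}}\frac{b_\alpha^\dpg(\x^\dpg-\w,\y)}{\|\y\|_{\cl Y}},
\]
whose constant $\gamma_\alpha$ is controlled in terms of $c_\alpha$ from \eqref{eq:A-bdd-below} through the optimal-test-norm analysis sketched in Example~\ref{eg:uw} and carried out in \cite{CarstDemkoGopal16,DPGacta}. I would then invoke a Fortin operator $\Pi_h:\Ydpg\to\Yhdpg$ satisfying the Fortin identity $b_\alpha^\dpg(\w_h,\y-\Pi_h\y)=0$ for all $\w_h\in\Xhdpg$ and $\y\in\Ydpg$, the boundedness $\|\Pi_h\y\|_{\cl Y}\le C_F\|\y\|_{\cl Y}$, and the local mean-preservation $\int_K(\nu-(\Pi_h\y)_\nu)\,dx=0$ on each $K\in\oh$; for the chosen low-order test degrees $\cl P_2^d\times\cl P_{d+1}$ such a $\Pi_h$ is constructed in \cite{CarstDemkoGopal14}. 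Decomposing
\[
b_\alpha^\dpg(\x^\dpg-\w,\y)=b_\alpha^\dpg(\x^\dpg-\w,\Pi_h\y)+\bigl[\ell^\dpg(\y-\Pi_h\y)-b_\alpha^\dpg(\w,\y-\Pi_h\y)\bigr],
\]
the bracketed term vanishes: $b_\alpha^\dpg(\w,\y-\Pi_h\y)=0$ by the Fortin identity for $\w\in\Xhdpg$, and $\ell^\dpg(\y-\Pi_h\y)=(f,\nu-(\Pi_h\y)_\nu)_{L_2}=0$ by the mean-preservation property combined with the hypothesis $f\in\cl P_0$. Taking suprema and using boundedness of $\Pi_h$ then gives
\[
\sup_{\y\in\Ydpg\setminus\{0\}}\frac{b_\alpha^\dpg(\x^\dpg-\w,\y)}{\|\y\|_{\cl Y}}\le C_F\,\Ldpg(\w)^{1/2},
\]
so reliability holds with $C_1=(\gamma_\alpha/C_F)^2$.

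The main obstacle is the Fortin operator: a local construction on each element must simultaneously enforce the moment conditions that imply the Fortin identity against the low-order ultraweak trial space, preserve element means so that the hypothesis $f\in\cl P_0$ eliminates data oscillation, and be continuous in the $\alpha$-dependent broken graph norm with a constant $C_F$ that depends in a controlled way on $c_\alpha$. While the explicit construction of \cite{CarstDemkoGopal14} provides such a $\Pi_h$, tracking how its continuity constant inherits the $\alpha$-dependence of the test norm—and hence the ultimate $\alpha$-dependence of $C_1$—is the delicate analytical step.
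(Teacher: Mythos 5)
Your proposal is correct and follows essentially the same route as the paper: the paper simply cites the abstract a~posteriori estimate of \cite[Theorem~6.4]{DPGacta} (whose proof is exactly your Fortin-operator decomposition plus the continuous inf-sup and continuity bounds from \eqref{eq:A-bdd-below}), verifies the mean-preserving Fortin operator from \cite[Theorems~4.1 and~5.8]{DPGacta}, and checks that $\osc(F)$ vanishes for $f\in\cl P_0$ — precisely the three ingredients you assemble by hand. Your only substantive observation beyond the paper, correctly flagged, is that the Fortin identity forces the reliability bound to hold for $\w\in\Xhdpg$ rather than all of $\Xdpg$, which is consistent with the loss being defined on $\Xhdpg$.
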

\begin{proof}
  We apply~\cite[Theorem~6.4]{DPGacta} (see also
  \cite{CarstDemkoGopal14}) which  yields
  \begin{equation}
    \label{eq:reliability-efficiency}
    \begin{aligned}
      \gamma \| \x^{\dpg} - \w\|_{\Xdpg}
      & \le \| \vpi \|
    \| \vepzh \|_{\Ydpg}
    + \osc(F),
    \\
      \| \vepzh \|_{\Ydpg}
      & \le \| b_\alpha^\dpg \| \| \x^{\dpg} - \w \|_{\Xdpg},      
    \end{aligned}
  \end{equation}
  provided there is a continuous operator $\vpi: \Ydpg \to \Yhdpg$, called a Fortin operator, 
  satisfying
  \begin{equation}
    \label{eq:Fortin}
    b_\alpha^\dpg(\w_h, y - \vpi y) =0
    \quad \text{ for all }
    \w_h \in \Xhdpg, y \in \Ydpg,
  \end{equation}
  and provided the norm $\| b_\alpha^\dpg \|$ and the
  inf-sup constant $\gamma$ of the DPG form over $\Xdpg \times \Ydpg$
  are finite and positive. In~\eqref{eq:reliability-efficiency},
  $\| \vpi\|$ is the operator norm of $\vpi$ and
  \[
    \osc(F )= \| F \circ (I - \vpi) \|_{(\Ydpg)'}.
  \]
  Since the DPG loss function is defined by~\eqref{eq:DPG-loss-fun},
  the estimates of~\eqref{eq:reliability-efficiency} prove the result
  once we verify these conditions and show that $\osc(F)$ vanishes.

  To obtain the required Fortin operator $\vpi$, recall that for
  any $r \in \Hdiv$ and $w \in \Ho$, by~\cite[Theorems~4.1
  and~5.8]{DPGacta}, there are functions $\vpig w\in \cl P_{1+d}$ and
  $\vpid r\in \cl P_2^d$ such that
  \begin{equation}
    \label{eq:Fortin}    
    \begin{aligned}
      \int_K \vpig w - w  & = 0,
      & \int_F \vpig w - w  & = 0, 
      \\
      \int_K \vpid r - r  & = 0, \qquad 
      & \int_{\d K}  (\vpid r - r) \cdot n & = 0,
    \end{aligned}
  \end{equation}
  for all mesh elements $K$ and all facets $F$ of $\d K$. In the last
  equality the integral of $r \cdot n$ over $\d K$ must in general be
  interpreted as a functional action of the $H^{-1/2}(\d K)$~trace
  $(r \cdot n)|_{\d K} $ on $\d K$.  Setting $\vpi : \Ydpg \to \Yhdpg$
  by $\vpi(r, w) = (\vpid r, \vpig w)$ the same theorems yield the
  bound
  \[
    \| \vpi (r, w) \|_{\Hdiv \times \Ho} \lesssim
    \| (r, w) \|_{\Hdiv \times \Ho}.
  \]
  Now exactly the same argument that proved~\eqref{eq:Ydpg-equiv}
  shows that the above inequality implies
  \[
    \| \vpi (r, w) \|_{\Ydpg} \lesssim  \| (r, w) \|_{\Ydpg}, \qquad (r,w) \in \Ydpg,
  \]
  so $\| \vpi\| \lesssim 1$. Also,~\eqref{eq:Fortin} implies
  $b_\alpha^\dpg(\w_h, (r, w) - \vpi (r, w)) =0$ for all
  $\w_h \in \Xhdpg$ and $(r,w) \in \Ydpg$.  Furthermore,
  \[
    \osc(F)
    = \sup_{y \in \Ydpg}  \frac{(F, y - \vpi y)_{L_2}}{ \| y \|_{\Ydpg} }
  \]
  vanishes since \eqref{eq:Fortin} implies 
  \begin{align*}
    (F, (r, w) - \vpi (r,w))_{L_2}
    & = (f, w - \vpig w)_{L_2} = 0
  \end{align*}
  for all $(r, w) \in \Ydpg$ since $f \in \cl P_0$. It only remains to
  verify that
  \begin{equation}
    \label{eq:cty-infsup}
    \| b_\alpha^\dpg \| \lesssim 1, \qquad 1 \lesssim \gamma.
  \end{equation}
  These are shown in~\cite[Theorem~7.6 and Example~7.8]{DPGacta} (see
  also~\cite{DemkoGopal11a}) using \eqref{eq:A-bdd-below}.
\end{proof}

The DPG parameter-to-solution map is found by performing the
minimization in~\eqref{cL} with $\cl L$ set to $\Ldpg$
and $\Xh$ set to $\Xhdpg$.
Precisely as in the preceding section, we employ the lifted  loss function 
\begin{equation}
    \label{mean-squared}
    \LL^{\dpg}(\w; \pdom_M):= \frac 1M \sum_{\pp\in\pdom_M}
    \Ldpg(\w(\alpha)),
\end{equation}
for the empirical risk minimization that trains the NN. As in the
FOSLS case, one can track constants to argue robustness for bounded
parameter domains such as in \eqref{eq:Qab}. But we have instead
chosen to devote an entire section (Section~\ref{sec:param-robust}) to
the robustness of the DPG loss.

\section{Performance of neural network predictions}
\label{sec:initial-results}

In this section, we show numerical results for both types of
learning methods and compare their performance. In these comparisons we do not take the (substantial, offline) training costs into account but focus solely on the achieved accuracy, reflected by the respective loss functions. Since the evaluation of DPG losses is slightly more expensive, it will be important to understand features that may offset somewhat higher cost.
Our numerical   results in this section  show
comparable performance for parameter-to-solution map approximations
obtained using NN of the form \eqref{eq:NN-F}, trained either using the FOSLS loss or using the DPG
loss.  This then forms the backdrop for further performance gains
described in Section~\ref{sec:param-robust}.

In all our numerical results we set $m_\alpha = 4$, $\om$ to the unit
square, split into four congruent subdomains, partitioned into a
finite element mesh (that aligns with the subdomains) of mesh size
$h \approx 0.1$, and set $f \equiv 1$. (Smaller $h$ and higher $m_\alpha$ are
can certainly be considered without any change in the previous theory provided   more computational resources are available.) 
We use a fixed NN architecture
setting in all experiments of this section with the number of hidden
layers equal to $L = 13$, $N = 128$, and rank $r = 32$ (see
\S\ref{ssec:neural-network} for how they determine the NN map).  Since
$m_\alpha=4$ and every $\alpha$ in $\cl Q$ is determined by four
numbers. With a small abuse of notation, from now on we do not distinguish between the function 
$\alpha$ and the vector $\bbalpha$ and write $\alpha$ as a 4-tuple,
i.e., $\alpha = (\alpha_1, \alpha_2, \alpha_3, \alpha_4),$ when
reporting experimental results. The components of the tuple correspond
to values of $\alpha$ on the bottom left, bottom right, top left and
top right subdomains, as illustrated in the example in
Figure~\ref{fig:alpha_vec_correspondence}. We use
NGSolve~\cite{Schobother}, an open-source finite element library, for
shape functions, matrix assembly, and efficient implementation of all finite element structures needed for computing the loss functions.
We implement the DPG and FOSLS
loss functions within the PyTorch~\cite{PaszkGrossMassa19} framework,
interfacing it with NGSolve, so as to leverage the backward
differentiation and stochastic gradient descent facilities of PyTorch.

Each NN is trained using a set of $M=1024$ random samples of $\alpha$
found using a {\em normal distribution} with a specified mean value
$\bar \alpha \in \bb R^4$ and standard deviation $\sigma$, as
follows. We take the square root of the positive numbers given in
$\bar\alpha$, then sample from a normal distribution around that mean
with standard deviation $\sigma$, and then square the result
(so $\alpha$ is from a chi-squared distribution). This is done to
guarantee that the material coefficients are positive in all
experiments (while maintaining the ability to sample difficult extreme
values of $\alpha$).  We set the number of epochs to $5000$, the batch
size to $32$ and the learning rate to $0.0001$ in all trainings.  The
NN predictions of solutions are then tested against the actual finite
element solution on random samples of $\alpha$ in multiple ways as
described below. Note that $M=1024$ is a somewhat modest number of
training samples (even if we had restricted $\alpha$ to a bounded
four-dimensional cube). We shall see that good predictions
can be obtained even for $\alpha$ in the unbounded four-dimensional
domain of~\eqref{eq:Qdefn} with just $M=1024$ training samples.

\subsection{Visualization of some NN predictions}

We first present a qualitative visual comparison between the neural
network predictions and the exact finite element solution for some
$\alpha $ taken from the distribution described before, with mean and
standard deviation set by 
\begin{equation}
\label{eq:distribution-prm}
  \bar \alpha = (10^{-1}, 1, 1, 10^{-1}),
  \quad 
  \sigma = 0.1.
\end{equation}
We generate a pair of predictions
\begin{equation}
  \label{eq:dpg-fosls-theta}
  \x_\theta^\fosls = (q_\theta^\fosls, u_\theta^\fosls) \quad \text{ and }\quad 
  \x_\theta^\dpg = (q_\theta^\dpg, u_\theta^\dpg, \hat u_\theta^\dpg, \hat q_\theta^\dpg)
\end{equation}
from an NN trained with the FOSLS loss and an NN trained with the DPG loss,
respectively.  The last two DPG components, namely
$(\hat q_\theta^\dpg, \hat u_\theta^\dpg)$, can be identified to be in the same
space where FOSLS solution belongs since they have a unique extension
from the element boundaries to the element interior as functions in
$\Xhfosls$.

We compare these predictions with the corresponding finite element
solutions $(q_h^\fosls, u_h^\fosls)$ and
$(\hat q_h^\dpg, \hat u_h^\dpg)$, obtained by
solving~\eqref{eq:FOSLS-method}and ~\eqref{eq:DPG-method},
respectively, for the same $\alpha$. The prediction $u_\theta^\fosls$
and the finite element solution $u_h^\fosls$ are plotted over $\om$ in
Figures~\ref{fig:good_u_FOSLS_NN} and~\ref{fig:good_u_FOSLS_FEM} for a
randomly chosen
\begin{equation}
  \label{eq:alpha-sample-good}
    \alpha_{\text{sample}} = (0.0904, 0.7255, 0.9192, 0.1948).
\end{equation}
The analogous quantities from the DPG method of comparable order are
$\hat u_\theta^\dpg$ and $\hat u_h^\dpg$ (obtained using the same 
$\alpha_{\text{sample}}$).  They are plotted in Figures~\ref{fig:good_uhat_DPG_NN}
and~\ref{fig:good_uhat_DPG_FEM}.  Clearly, by a visual comparison, we
see that both the neural networks perform very well in predicting the
finite element solution.

\begin{figure}
  \centering
  \subcaptionbox
  {$u^\fosls_\theta$
    \label{fig:good_u_FOSLS_NN}
  }[0.45\textwidth]
  {
    \includegraphics
    [scale=0.25,
    % trim={left bottom right top}
    trim={380  100  300  100}, clip]
    {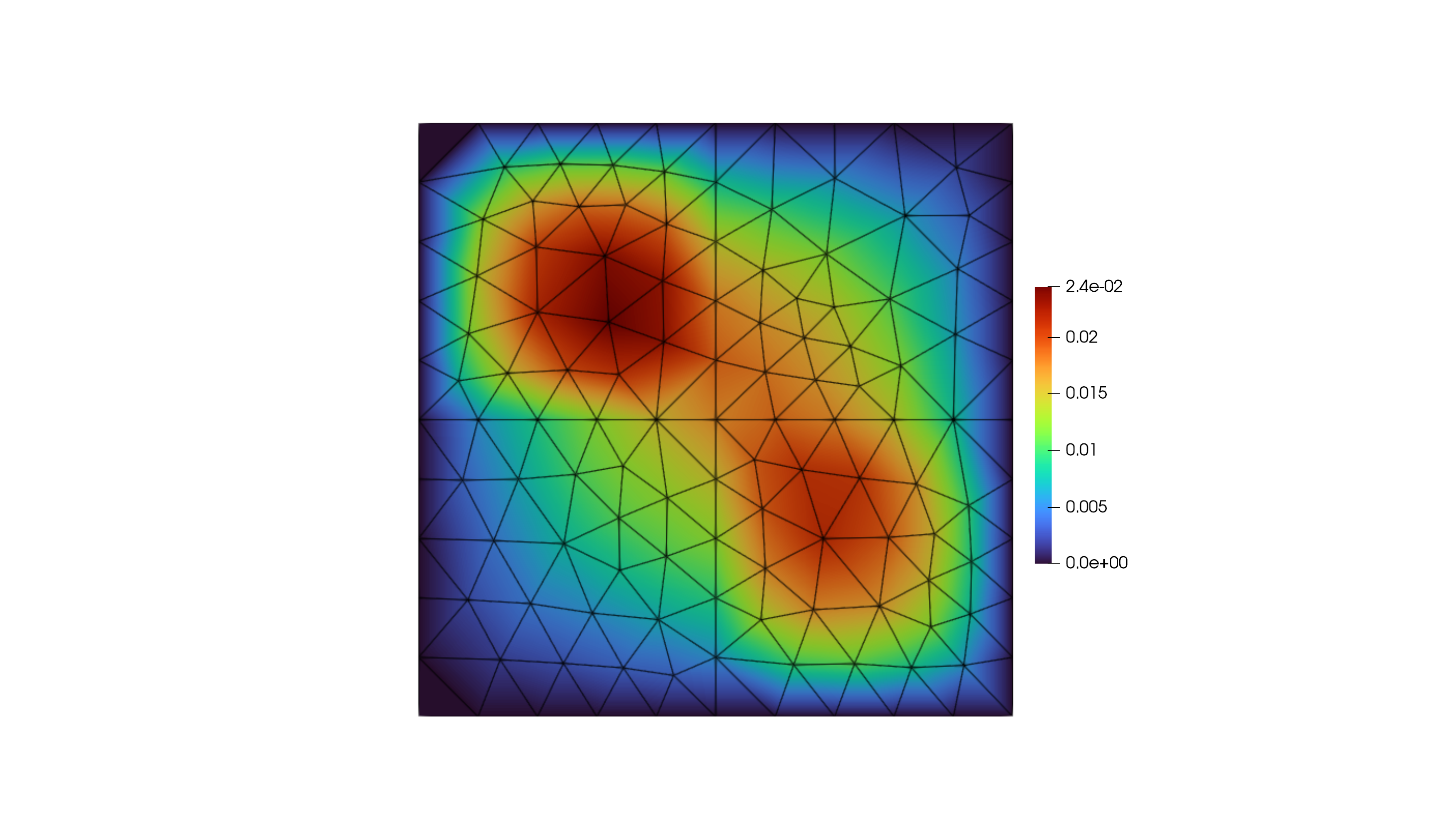}
  }
  \qquad
  \subcaptionbox
  {$u^\fosls_h$
    \label{fig:good_u_FOSLS_FEM}
  }[0.45\textwidth]
  {
    \includegraphics[scale=0.25,
    % trim={left bottom right top}
    trim={380  100  300  100}, clip]
    {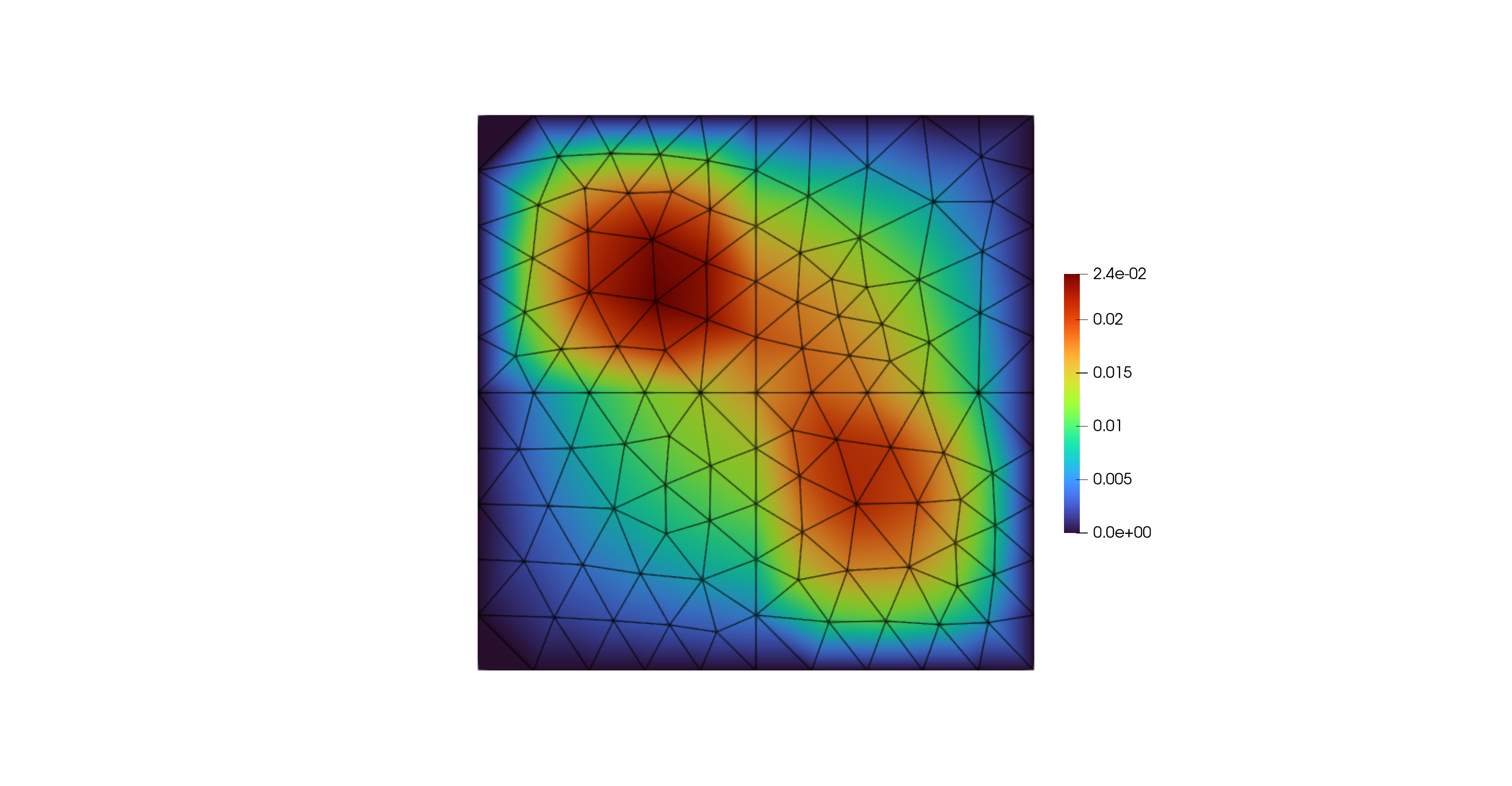}
  }
  \qquad  
  \subcaptionbox
  {$\hat u^\dpg_{\theta}$
    \label{fig:good_uhat_DPG_NN}
  }[0.45\textwidth]
  {
    \includegraphics[scale=0.25,
    trim={380  100  300  100}, clip]
    {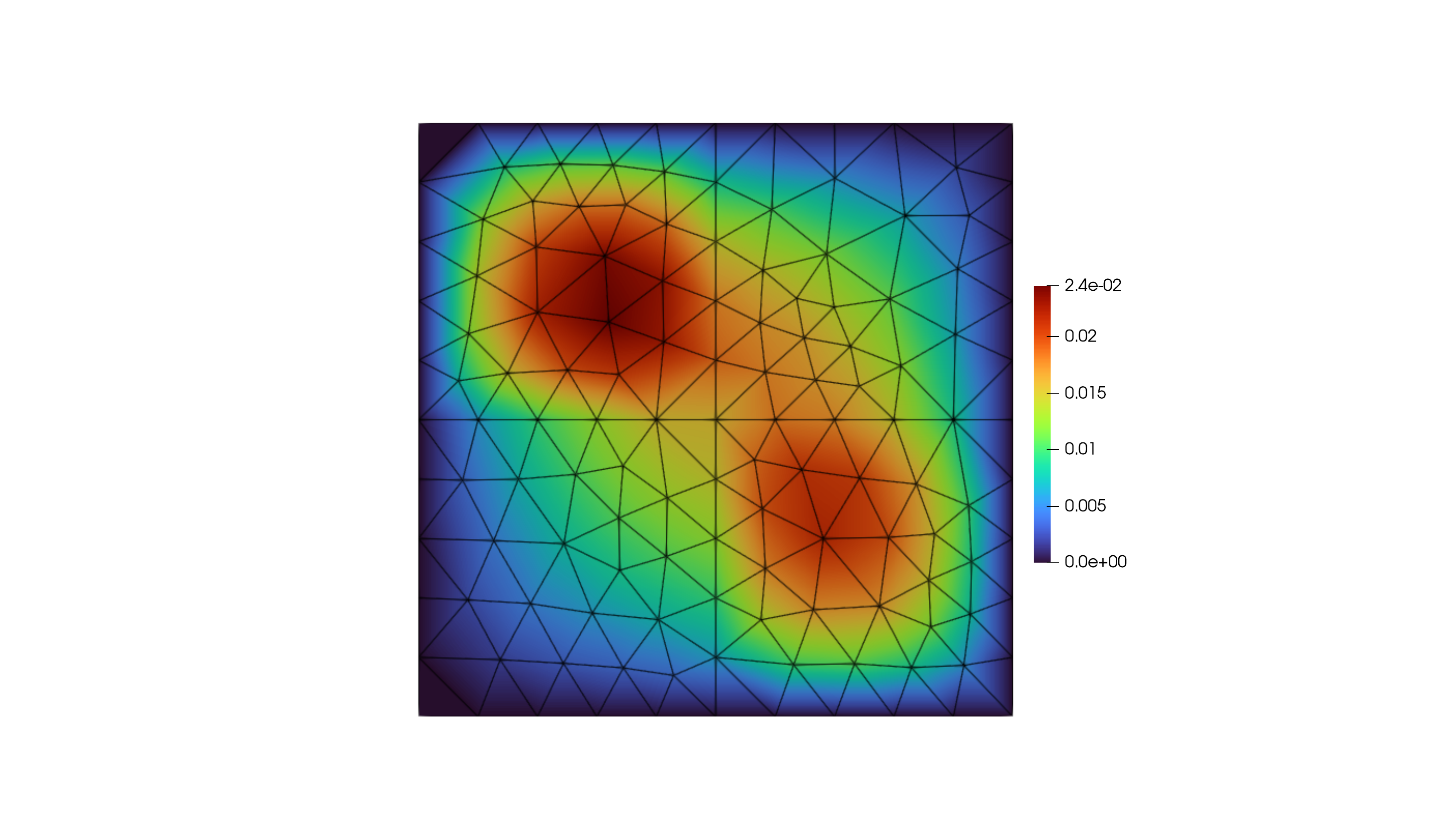}
  } 
  \qquad 
  \subcaptionbox
  {$\hat u^\dpg_h$
    \label{fig:good_uhat_DPG_FEM}
  }[0.45\textwidth]
  {
    \includegraphics[scale=0.25,
    trim={380  100  300  100}, clip]
    {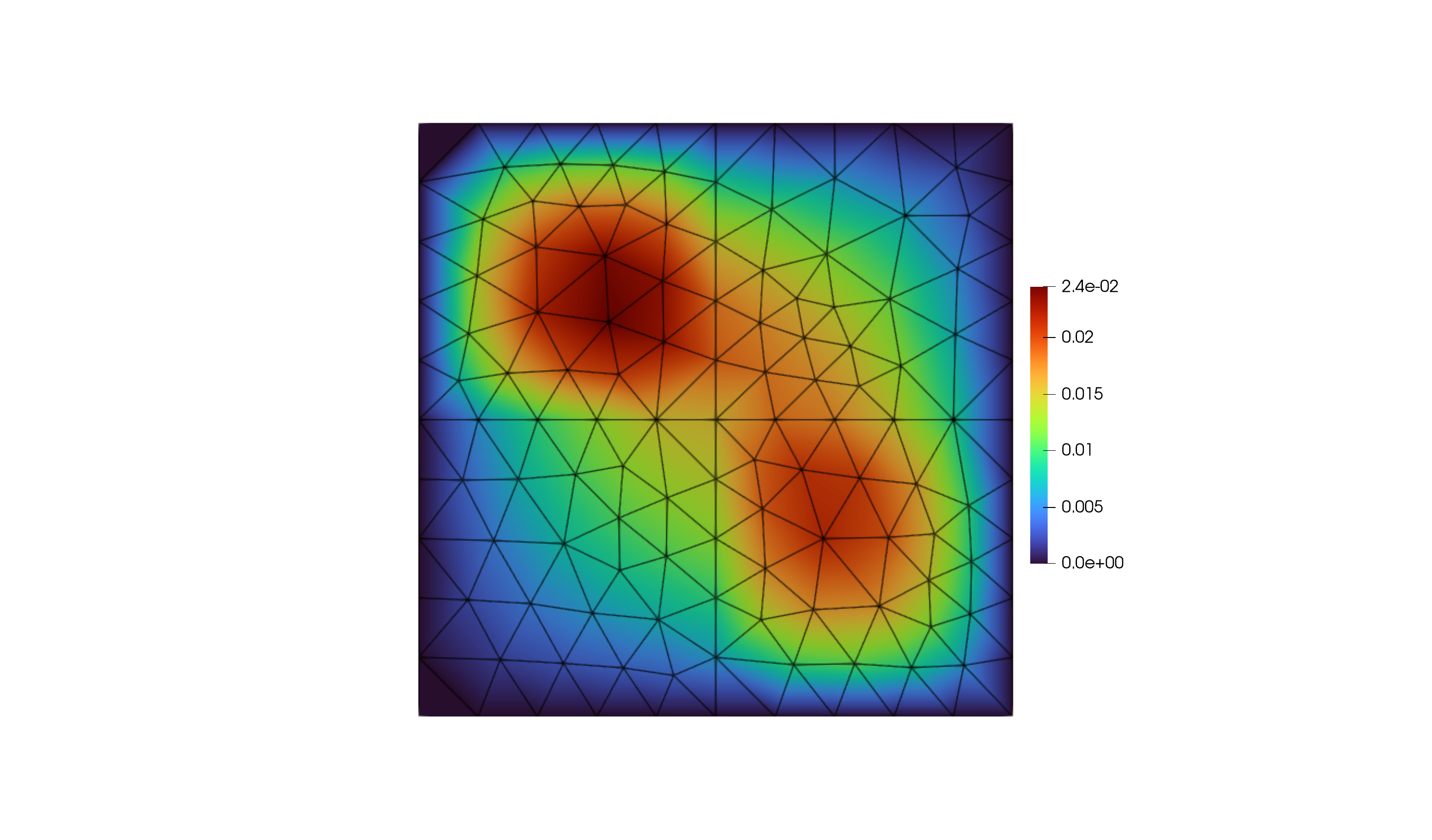}
  }
  % % 
  % % Arxiv is not accepting these filenames!
  % %
  % \subcaptionbox
  % {$u^\fosls_\theta$
  %   \label{fig:good_u_FOSLS_NN}
  % }[0.45\textwidth]
  % {
  %   \includegraphics
  %   [scale=0.25,
  %   % trim={left bottom right top}
  %   trim={380  100  300  100}, clip]
  %   {img/paraview/u_FOSLS_NN_alpha=[0.0904, 0.7255, 0.9192, 0.1948].pdf}
  % }
  % \qquad
  % \subcaptionbox
  % {$u^\fosls_h$
  %   \label{fig:good_u_FOSLS_FEM}
  % }[0.45\textwidth]
  % {
  %   \includegraphics[scale=0.25,
  %   % trim={left bottom right top}
  %   trim={380  100  300  100}, clip]
  %   {img/paraview/u_FOSLS_FEM_alpha=[0.0904, 0.7255, 0.9192, 0.1948].pdf}
  % }
  % \qquad  
  % \subcaptionbox
  % {$\hat u^\dpg_{\theta}$
  %   \label{fig:good_uhat_DPG_NN}
  % }[0.45\textwidth]
  % {
  %   \includegraphics[scale=0.25,
  %   trim={380  100  300  100}, clip]
  %   {img/paraview/uhat_DPG_NN_alpha=[0.0904, 0.7255, 0.9192, 0.1948].pdf}
  % } 
  % \qquad 
  % \subcaptionbox
  % {$\hat u^\dpg_h$
  %   \label{fig:good_uhat_DPG_FEM}
  % }[0.45\textwidth]
  % {
  %   \includegraphics[scale=0.25,
  %   trim={380  100  300  100}, clip]
  %   {img/paraview/uhat_DPG_FEM_alpha=[0.0904, 0.7255, 0.9192, 0.1948].pdf}
  % }
  %
  \caption{Visualization of actual finite element solutions (right) and predictions (left) from an NN trained around the $\bar\alpha$ in~\eqref{eq:distribution-prm}, for the sampled $\alpha$ in~\eqref{eq:alpha-sample-good}.}
  \label{fig:qualitative_good_alpha}
\end{figure}

Now, we repeat the exercise, this time for a different $\alpha$
deliberately selected to be away from the mean $\bar \alpha$
in~\eqref{eq:distribution-prm}, namely
\begin{equation}
  \label{eq:alpha-sample-bad}
  \alpha_{\text{sample}} = (0.43, 1.0, 1.0, 0.43).
\end{equation}
We do so to observe if
the predictive capacity of the neural networks decrease when $\alpha$
approaches the tail of the distribution and report this comparison in
Figures~\ref{fig:bad_u_FOSLS_NN}---\ref{fig:bad_u_FOSLS_FEM} for FOSLS
and Figures~\ref{fig:bad_u_FOSLS_NN}---\ref{fig:bad_u_FOSLS_FEM} for
DPG. The NN trained by FOSLS loss produces a prediction which looks
poorer than the one produced by the DPG loss. For this $\alpha$, the
finite element solutions produced by both methods (also displayed) are
still computed stably, even though the NN predictions vary. (Note that
for more difficult $\alpha$, the finite element solutions from both
methods can also visually differ due to stability differences of the methods,
but this is not the case in the figure.)

\begin{figure}
  \centering
   
  \subcaptionbox
  {$u^\fosls_\theta$
    \label{fig:bad_u_FOSLS_NN}
  }[0.45\textwidth]
  {
    \includegraphics[scale=0.25,
    trim={380  100  300  100}, clip]
    {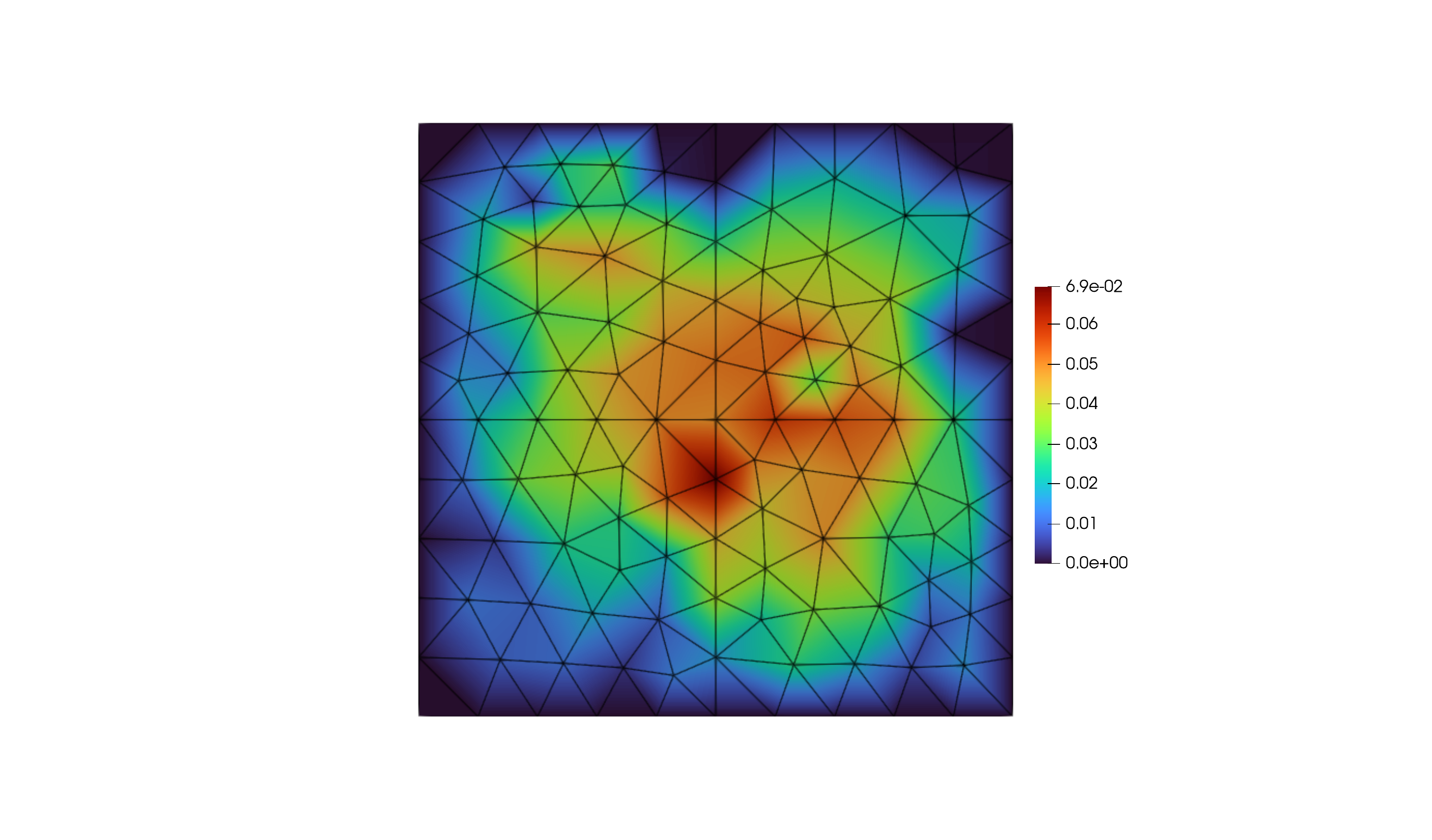}
  }
  \qquad
  \subcaptionbox
  {$u^\fosls_h$
    \label{fig:bad_u_FOSLS_FEM}
  }[0.45\textwidth]
  {
    \includegraphics[scale=0.25,
    trim={380  100  300  100}, clip]
    {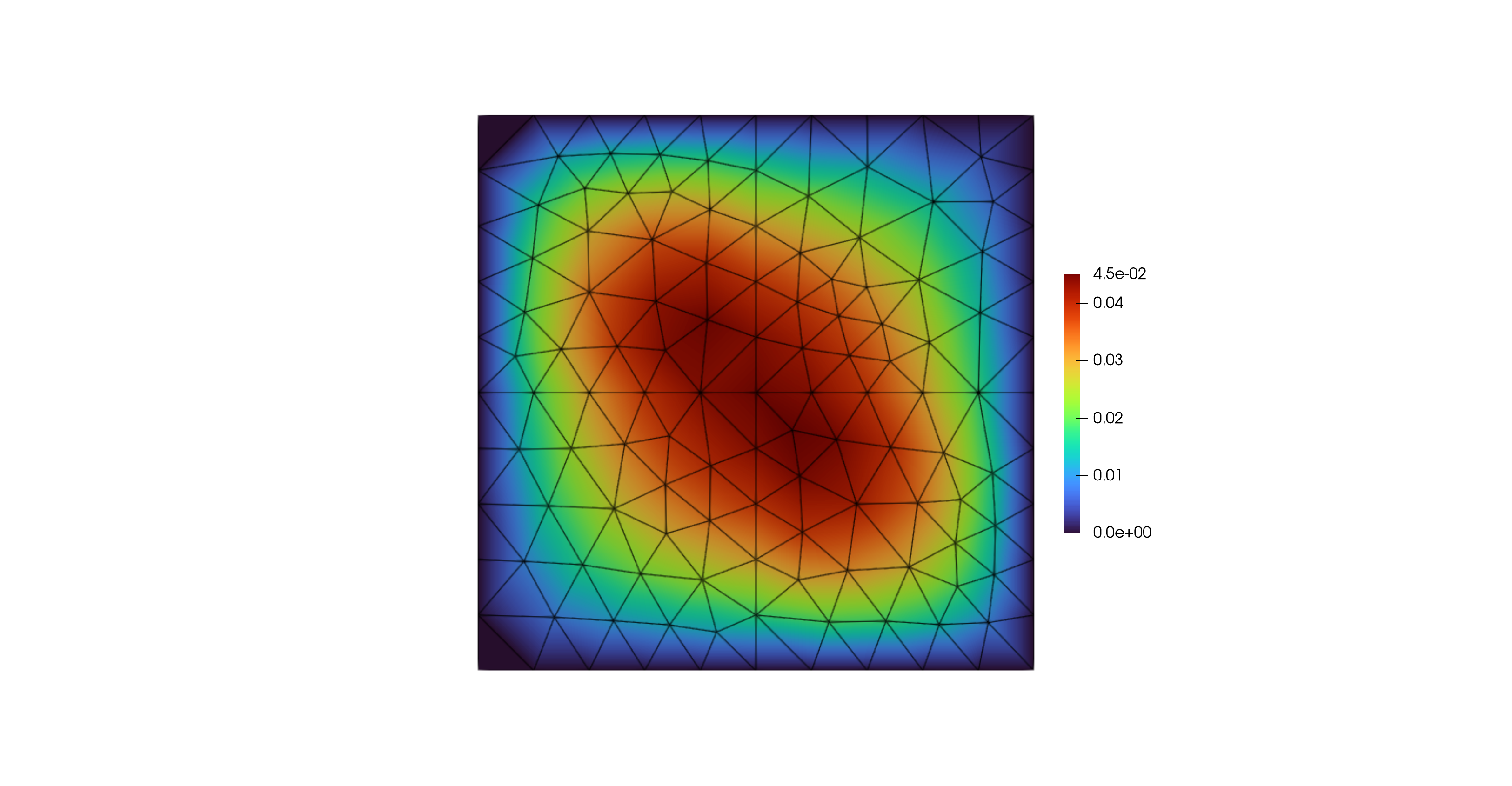}
  }
  \qquad  
  \subcaptionbox
  {$\hat u^\dpg_{\theta}$
    \label{fig:bad_uhat_DPG_NN}
  }[0.45\textwidth]
  {
    \includegraphics[scale=0.25,
    trim={380  100  300  100}, clip]
    {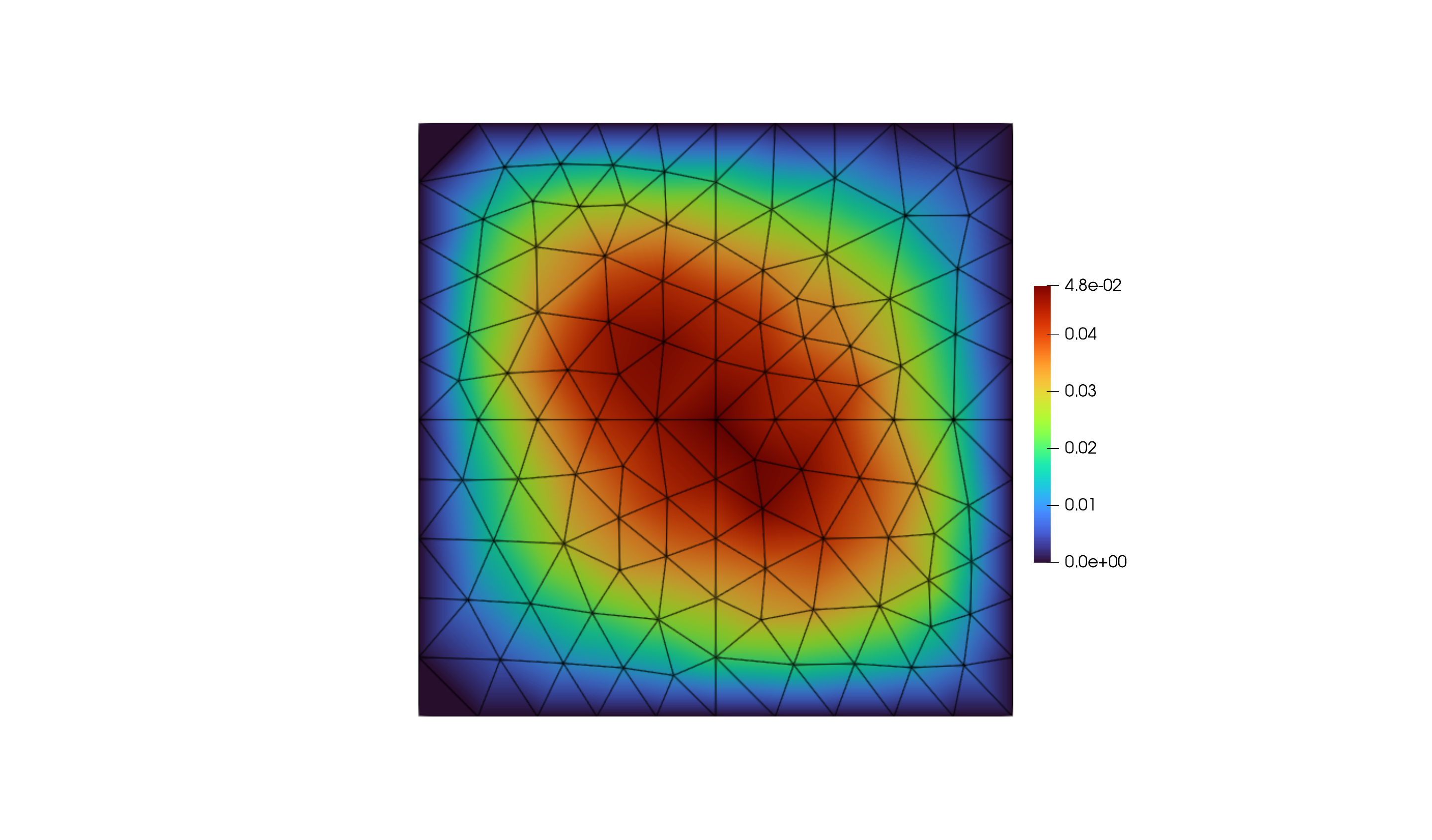}
  } 
  \qquad 
  \subcaptionbox
  {$\hat u^\dpg_h$
    \label{fig:bad_uhat_DPG_FEM}
  }[0.45\textwidth]
  {
    \includegraphics[scale=0.25,
    trim={380  100  300  100}, clip]
    {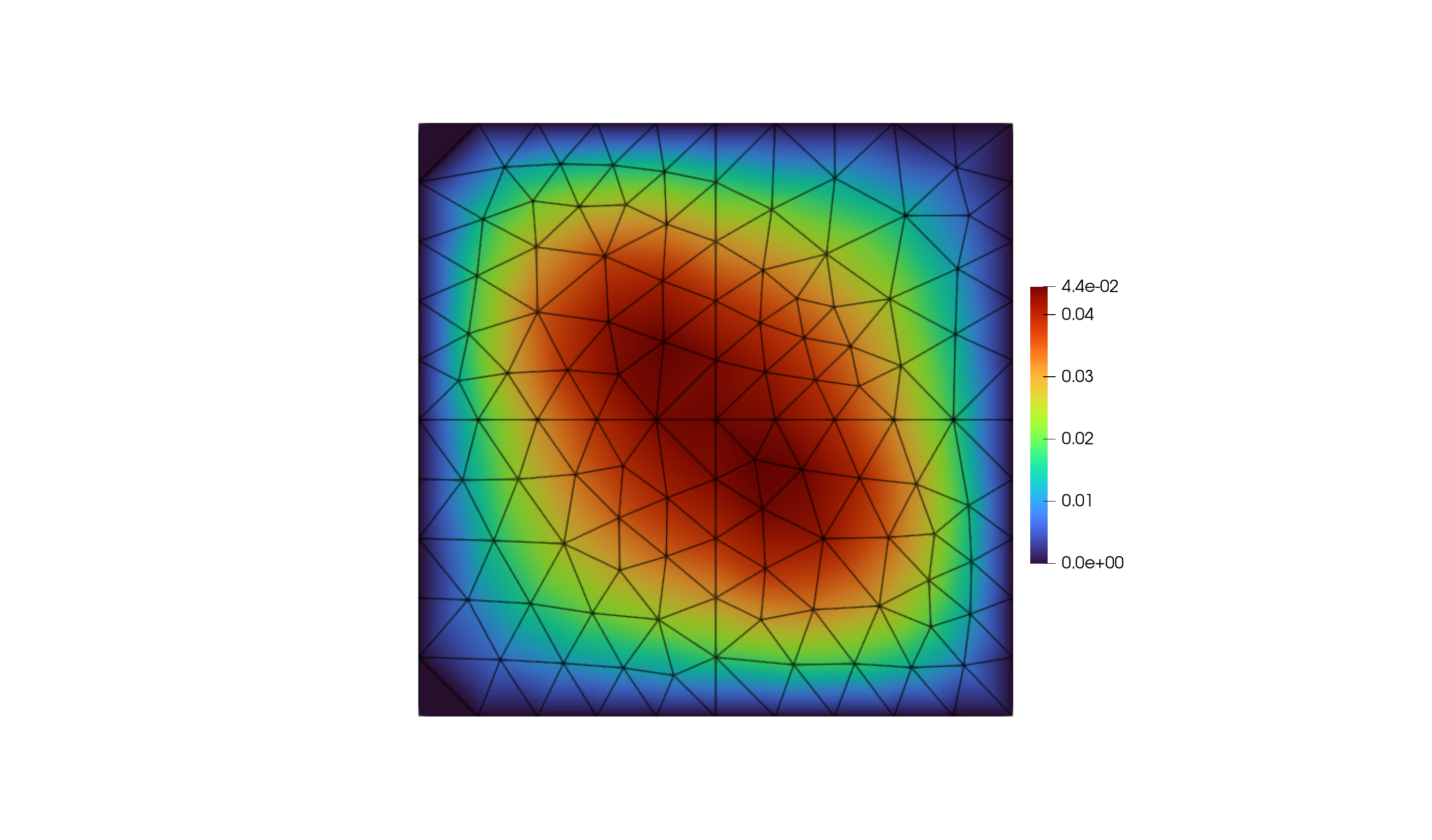}
  }
  % % 
  % % Arxiv is not accepting these filenames!
  % %  
  % \subcaptionbox
  % {$u^\fosls_\theta$
  %   \label{fig:bad_u_FOSLS_NN}
  % }[0.45\textwidth]
  % {
  %   \includegraphics[scale=0.25,
  %   trim={380  100  300  100}, clip]
  %   {img/paraview/u_FOSLS_NN_alpha=[0.43, 1.0, 1.0, 0.43].pdf}
  % }
  % \qquad
  % \subcaptionbox
  % {$u^\fosls_h$
  %   \label{fig:bad_u_FOSLS_FEM}
  % }[0.45\textwidth]
  % {
  %   \includegraphics[scale=0.25,
  %   trim={380  100  300  100}, clip]
  %   {img/paraview/u_FOSLS_FEM_alpha=[0.43, 1.0, 1.0, 0.43].pdf}
  % }
  % \qquad  
  % \subcaptionbox
  % {$\hat u^\dpg_{\theta}$
  %   \label{fig:bad_uhat_DPG_NN}
  % }[0.45\textwidth]
  % {
  %   \includegraphics[scale=0.25,
  %   trim={380  100  300  100}, clip]
  %   {img/paraview/uhat_DPG_NN_alpha=[0.43, 1.0, 1.0, 0.43].pdf}
  % } 
  % \qquad 
  % \subcaptionbox
  % {$\hat u^\dpg_h$
  %   \label{fig:bad_uhat_DPG_FEM}
  % }[0.45\textwidth]
  % {
  %   \includegraphics[scale=0.25,
  %   trim={380  100  300  100}, clip]
  %   {img/paraview/uhat_DPG_FEM_alpha=[0.43, 1.0, 1.0, 0.43].pdf}
  % }
  \caption{Visualization of actual finite element solutions (right) and predictions (left) from an NN trained around the $\bar\alpha$ in~\eqref{eq:distribution-prm}, for the sampled $\alpha$ in~\eqref{eq:alpha-sample-bad}.}
  \label{fig:qualitative_bad_alpha}
\end{figure}

\subsection{Performance when varying the mean of $\alpha$}

To obtain a more quantitative measure of performance of NN predictions
across the $\alpha$-distribution, we now tabulate the  observed mean of
squares of errors from a (relatively large) sample of 10,000
$\alpha$-values in two experiments.

In our first experiment, we fix $\alpha$ distributions with standard
deviation $\sigma = 0.1$ and vary the mean
$\bar\alpha = ({\alpha_1}, 1, 1, \alpha_1)$ by choosing $\alpha_1$ to 
$100^{-1}, 10^{-1}, 10, $ or $100$. With each such choice of mean, an NN
was trained with FOSLS loss and another with DPG loss.  Predictions
from NN, trained using FOSLS loss, are denoted by
$(q_\theta^\fosls, u_\theta^\fosls)$. They are compared against the
actual FOSLS finite element solution
$\xfosls = (q_h^\fosls, u_h^\fosls) \in \Xhfosls$ obtained by solving
\eqref{eq:FOSLS-method} in the $L_2(\om)$-norm. The results are
tabulated in Table~\ref{tab:vary-mean} together with comparable
quantities for the DPG case. Comparable quantities in the DPG solution
$\xdpg = (q_h^\dpg, u_h^\dpg, \hat u_h^\dpg, \hat q_h^\dpg)$ are the
last two components, since $(\hat q_h^\dpg, \hat u_h^\dpg)$ can be
identified to be in the same space where FOSLS solutions belong since
they have a unique extension from the element boundaries to the
element interior as functions in $\Xhfosls$.  Accordingly, we compare,
in the $L_2(\om)$-norm, the predictions
$(\hat q_\theta^\dpg, \hat u_\theta^\dpg)$ from the NN (trained using
the DPG loss) with the DPG solution $(\hat q_h^\dpg, \hat u_h^\dpg)$
obtained by solving~\eqref{eq:DPG-method} at each sample point.  We
reiterate that the error numbers in Table~\ref{tab:vary-mean} are errors in $\X(\pdom_M)= \ell_2(\pdom_M;\cX^\fosls), \ell_2(\pdom_M;\Xdpg)$, respectively (see Section~\ref{ssec:vc}), i.e., they
are the
mean of $L_2(\om)$-norms of errors of ten thousand random $\alpha$
samples taken from the above-mentioned distribution for
$\alpha$, even though the training was done only with $1024$ samples.
Clearly the mean of the errors are small and comparable for
both methods.
\begin{table}
  \centering
  \begin{tabular}{l|c|c|c|c} % from Pablo's experiments_output json file of results
    \hline 
    \multicolumn{5}{c}{From $\alpha$ distributions with mean
    $\bar\alpha = ({\alpha_1}, 1, 1, \alpha_1)$, std.dev.~$\sigma = 0.1$}
    \\ \hline 
    Mean of error squares
    &
      $\alpha_1=1/100$
    &
      $\alpha_1=1/10$
    &
      $\alpha_1=10$
    &
      $\alpha_1=100$
    \\ \hline 
    \footnotesize{$\| \hat u_\theta^{\dpg} - \hat u_h^{\dpg} \|_{L_2}^2$} \hfill DPG:
    &
      2.373e-08   % data['9']
    &
      3.136e-08   % data['8']
    &
      4.449e-07   % data[6]
    &
      5.071e-05 %  6.6905e-5   % data['7']
    \\
    \footnotesize{$\| u_\theta^{\fosls} - u_h^{\fosls} \|_{L_2}^2$}\;\; \hfill FOSLS:
    &
      1.340e-07  % data['9']
    &
      1.671e-08  % data['8']
    &
      5.745e-07  % data [6]
    &
      4.8261e-02 %     % data['7']
    \\
    \hline

    \footnotesize{$\| \hat q_\theta^{\dpg} - \hat q_h^{\dpg} \|_{L_2}^2$} \hfill DPG:
    &
      4.335e-02   % data['9']
    &
      1.578e-03   % data['8']
    &
      2.146e-04   % data[6]
    &
      1.830e-02 %  6.6905e-5   % data['7']
    \\
    \footnotesize{$\| q_\theta^{\fosls} - q_h^{\fosls} \|_{L_2}^2$}\;\; \hfill FOSLS:
    &
      3.824e-02  % data['9']
    &
      4.702e-03  % data['8']
    &
      5.265e-05  % data [6]
    &
      5.454e-03 %     % data['7']
    \\
    \hline 
  \end{tabular}
  \caption{NN prediction errors as parameter  means are varied.}
  \label{tab:vary-mean}
\end{table}

\subsection{Performance when varying the standard deviation of $\alpha$}

In the next experiment, we fix the mean of $\alpha$ and train the NN
with various standard deviation choices of $\sigma$ in
$\{ 0.1, 0.5, 1, 5, 10\}$ using either the DPG loss function or the
FOSLS loss function. We compute errors as in the previous table. They
are reported in Table~\ref{tab:vary-std}. Satisfactory performance is
again observed for both methods as measured by the mean
over ten thousand test samples.

\begin{table}
  \centering
  \begin{tabular}{l|c|c|c|c|c} % from Pablo's experiments_output json file of results
    \hline 
    \multicolumn{6}{c}{From $\alpha$ distributions with mean
    $\bar\alpha = (0.01, 1, 1, 0.01)$ and {std.dev.~$\sigma$}}
    \\ \hline 
    Mean of error squares
    &
      $\sigma = 0.1$    % data[9]
    &
      $\sigma = 0.5$    % data[10]
    &
      $\sigma = 1$     % data[11]
    &
      $\sigma = 5$     % data[12]
    &
      $\sigma = 10$     % data[13]
    \\ \hline 
    \footnotesize{$\| \hat u_\theta^{\dpg} - \hat u_h^{\dpg} \|_{L_2}^2$}\hfill DPG:
    & 
      2.373e-08
    &
      3.300e-05
    &
      2.100e-05
    &
      2.335e-03
    &
      0.613
    \\
    \footnotesize{$\| u_\theta^{\fosls} - u_h^{\fosls} \|_{L_2}^2$}\;\; \hfill FOSLS:
    &
      1.340e-07
    &
      5.900e-05
    &
      1.100e-05
    &
      3.284e-02
    & 
      3.520
    \\
    \hline

    \footnotesize{$\| \hat q_\theta^{\dpg} - \hat q_h^{\dpg} \|_{L_2}^2$}\hfill DPG:
    & 
      4.335e-02
    &
      3.301e-03
    &
      7.866e-03
    &
      1.930e-02
    &
      0.007
    \\
    \footnotesize{$\| q_\theta^{\fosls} - q_h^{\fosls} \|_{L_2}^2$}\;\; \hfill FOSLS:
    &
      3.824e-02
    &
      6.015e-03
    &
      7.209e-03
    &
      6.965e-02
    & 
      0.037
    \\
    \hline 
  \end{tabular}
  \caption{NN prediction errors as parameter standard deviations are varied}
  \label{tab:vary-std}
\end{table}

To conclude this section, we highlight an observation from both
Tables~\ref{tab:vary-mean} and~\ref{tab:vary-std}.  We see a slight
improvement in the NN prediction error for the the DPG case for
extreme values of $\alpha_1$ in Table~\ref{tab:vary-mean}, namely
$\alpha_1 = 100^{-1}$ and $100$. Similarly, in
Table~\ref{tab:vary-std}, we see that the DPG error squared means are
slightly better for larger standard deviations. This sets the stage
for further investigations of parameter robustness of DPG loss functions and further gains that can be
extracted from tailored DPG loss functions.

\section{Improved parameter robustness using DPG ideas}
\label{sec:param-robust}

The proofs of variational correctnesss of both the FOSLS loss function
and the DPG loss function depended on an inequality independent of
discretizations, namely~\eqref{eq:A-bdd-below}. Even if this might
suggest that it is impossible to remove the $\alpha$~dependence, we
are able to ameliorate the $\alpha$ dependence in the DPG case, as we
shall see in this section.  As motivation, we refer again to the
antecedent of the ultraweak DPG method given in
Example~\ref{eg:uw}. There, the optimal test norm
of~\eqref{eq:opti-test-norm}, $\| A_\alpha^* \y\|_{L_2}$ for any $\y$
in the (unbroken) graph space yielded an isometry of the operator
induced by the corresponding weak formulation, which yielded an
attractive characterization of error in the $\alpha$-independent
$L_2$-norm in~\eqref{eq:uw-error-residual} with no $\alpha$-dependent
constants. When moving to the DPG setting, in the broken graph norm
of~\eqref{eq:Y-dpg-norm}, we are forced to add an $L_2$-term to make a
norm, namely in
$(\| A_\alpha^* \y\|_{L_2}^2 + \| \y\|_{L_2}^2)^{1/2}$, for $\y$ in
the broken graph space $\Ydpg$. Consequently, we no longer
have~\eqref{eq:uw-error-residual} to characterize the error in an
$\alpha$-independent norm. In order to retain as much $\pp$-robustness as possible, in this section, we weaken the previous DPG norm ``towards'' the optimal test norm of~\eqref{eq:opti-test-norm}, and then see to what extent we can modify resulting fiber loss functionals so as to obtain sharp bounds for the $L_2$-errors of the bulk terms (interior variables).

\subsection{Loss functionals built with a weaker test space norm}

Instead of \eqref{eq:Y-dpg-norm}, we now endow the test space
$\Ydpg = \Hdivoh \times H^1(\oh)$ by the following norm with an
additional scaling factor $s>0$,
\[
  \| (\tau, \nu) \|_{\cl Y, s}^2  =
  \sum_{K \in \oh}
  \left( \| A_\alpha^*(\tau,\nu) \|_{L_2(K)}^2 + s^{-2} \| (\tau, \nu) \|_{L_2(K)}^2
  \right).
\]
The corresponding inner product is denoted by
$(\cdot, \cdot)_{\cl Y, s}$.  The DPG variant using this inner product
is defined exactly as in \eqref{eq:DPG-method} but replacing
$(\cdot, \cdot)_{\cl Y}$ with $(\cdot, \cdot)_{\cl Y, s}$.  Such a method was
first studied in~\cite{GopalMugaOliva14}.
Analogous
to \eqref{eq:DPG-loss}, we define $\vepzs \in \Ydpg$ by 
\begin{subequations}
  \label{eq:DPG-loss-s}
  \begin{equation}
    \label{eq:DPG-loss-s-prep}
    (\vepzs, \y)_{\cl Y, s} = (F, \y)_{L_2} - b_\alpha^\dpg (\w, \y), \qquad
    \text{ for all } \y \in \Ydpg,
  \end{equation}
and a new $s$-dependent fiber loss  function by 
  \begin{equation}
    \label{eq:DPG-loss-s-fun}
    \hLsdpg(\w) := \| {\vepzs}\|_{\cl Y, s}^2, \qquad \w \in \Xhdpg, \quad \pp\in \pdom.
  \end{equation}
\end{subequations}
As $s$ increases, the test space norm becomes weaker (approaching~\eqref{eq:opti-test-norm}) and this new loss
function gains improved parameter robustness as shown in
Theorem~\ref{thm:dpg-loss-s} below. 

The drawback of the loss function $\hLsdpg$ is that it requires us to
solve the infinite-dimensional problem~\eqref{eq:DPG-loss-s-prep} for
$\varepsilon^\w_s$. As we shall see below,  we are able to get good results when using a
computable version obtained by replacing $\Ydpg$ with $\Yhdpg$ in \eqref{eq:DPG-loss-s-prep}, i.e,
in place of $\vepzs$, we use $\vepzhs \in \Yhdpg$ satisfying
\begin{subequations}
  \label{eq:computable-Ls}
  \begin{equation}
    \label{eq:computable-Ls-eps}
    (\vepzhs, \y)_{\cl Y, s} = (F, \y)_{L_2} - b_\alpha^\dpg (\w, \y), \qquad
    \text{ for all } \y \in \Yhdpg,
  \end{equation}
and
\begin{equation}
  \label{eq:computable-Ls-defn}
  \Lsdpg(\w) := \| \vepzhs\|_{\cl Y, s}^2, \qquad \w \in \Xhdpg.
\end{equation}
\end{subequations}
We will also consider the dual-parameter functional
\begin{equation}  
  \label{eq:computable-Ls1s2}
  \Lss{s_1}{s_2} (\w) := \frac{ s_2^2 \Lsa{s_1}(\w) - s_1^2 \Lsa{s_2}(\w) }{s_2^2  - s_1^2}
\end{equation}
for any two positive parameters {$s_1<s_2$}.

\subsection{Theoretical motivation}

We present a few theoretical observations that motivate  the new loss
function families defined above.

It will prove convenient to express
$\Xdpg = L_2(\om)^d \times L_2(\om) \times \Hoh\times H^{-1/2}(\d\oh)$
as a Cartesian product of the space of ``interior variables'' (or bulk variables) in
$\Xdpgo = L_2(\om)^d \times L_2(\om)$ and the space of ``interface variables'' 
$\Xdpgh = \Hoh\times H^{-1/2}(\d\oh)$,
i.e., any $\w^\dpg \in \Xdpg$ will be written as
\begin{equation}
  \label{eq:dpg-space-split}
  \w^\dpg \in \Xdpg = \Xdpgo \times \Xdpgh, \qquad  \w^\dpg = (\w, \hat \w),
  \text{ with } \w \in \Xdpgo, \; \hat \w \in \Xdpgh.
\end{equation}
Any $\hat \w$ in $\Xdpgh$ is obtained as the trace of a function in
$\Ho\times \Hdiv$, so the following coset definition makes sense:
\[
  Z({\hat \w}) = \{ (r, w) \in \Hdiv \times \Ho : (\tr w, \trn r) = 
  \hat \w\}.
\]
By~\eqref{eq:A-bdd-below}, $\| A_\alpha \cdot \|_{L_2}$ is
a norm on $\Hdiv \times \Ho$,  allowing us to define a minimal
extension operator
\[
  E \hat \w = \argmin_{ (r, w) \in Z({\hat \w})} \| A_\alpha (r, w) \|_{L_2}.
\]
Writing any $\w^\dpg \in \Xdpg$ as in~\eqref{eq:dpg-space-split}, we
use the above extension to define a trial space norm by
\begin{equation}
  \label{eq:s-alpha-norm}
  \| \w^\dpg\|_{s,\pp}^2 \equiv \| (\w, \hat \w) \|_{s,\pp}^2 := \| \w \|_{L_2}^2 + s^2 \| A_\alpha E  \hat \w\|_{L_2}^2.  
\end{equation}
Noting that  $Z(0)$ is a closed subspace of $\Xdpgo =L_2(\om)^d \times L_2(\om)$, 
we  also define the Peetre $K$-functional
\begin{equation}
  \label{eq:K-fun-defn}
  K(s, \w) = \inf_{\z \in Z(0)}
  \Big(\| \w - \z\|_{L_2}^2  + s^2 \| A_\alpha \z \|_{L_2}^2 \Big)
\end{equation}
defined for any $\w \in \Xdpgo$. In the next theorem, we also use following quantities for positive numbers $s, s_1,$ and~$s_2$:
\begin{gather}
  \label{eq:ksa}
  \ksa = \frac 1 2 \left( \frac{c^2_\alpha}{s^2}
    + \sqrt{ \frac{c_\alpha^4}{s^4}
      + 4 \frac{c^2_\alpha}{s^2}
    } \right),
  \\
  \label{eq:Ls1s2}
  \hLss{s_1}{s_2} (\w^\dpg) = \frac{ s_2^2 \hLs{s_1}(\w^{\dpg}) - s_1^2 \hLs{s_2}(\w^{\dpg}) }{s_2^2  - s_1^2},
  \\
  \label{eq:Ls1s2L}
  \hLssL{s_1}{s_2}(\w^{\dpg}) =
  \frac{ (1 + \ka{s_1})^{-1} s_2^2 \hLs{s_1}(\w^{\dpg}) - (1 +  \ka{s_2}) s_1^2 \hLs{s_2}(\w^{\dpg}) }
  {s_2^2  - s_1^2},
  \\
  \label{eq:Ls1s2R}
  \hLssR{s_1}{s_2}(\w^{\dpg}) =
  \frac{ (1 + \ka{s_1}) s_2^2 \hLs{s_1}(\w^{\dpg}) - (1 +  \ka{s_2})^{-1} s_1^2 \hLs{s_2}(\w^{\dpg}) }
  {s_2^2  - s_1^2},
\end{gather}
where  $c_\alpha$ is as in~\eqref{eq:A-bdd-below}.

\begin{theorem}
  \label{thm:dpg-loss-s}
  Let $s>0$, $ 0 < s_1 < s_2$, 
  $\alpha \in \cl Q$,  $\w \in \Xdpgo,$  $\hat \w \in \Xdpgh$, 
  and  $\w^\dpg = (\w, \hat \w)$. 
  Let 
  $\x^\dpg = (\x, \hat \x) = (q, u, \hat u, \hat q_n)$ denote the exact solution of \eqref{eq:dpg-undiscretized-formulation}
 for the arbitrarily chosen $\alpha \in \cl Q$.
  Then the following results hold.
  \begin{enumerate}
  \item \label{item:dpg-loss-s-1} The fiber loss  functionals $\hLsdpg$ are
    variationally correct and satisfy the two-sided estimates
    \begin{equation}
      \label{eq:dpg-loss-s}
      (1+ \ksa)^{-1} \;\| \x^\dpg - \w^\dpg \|_{s, \pp}^2
      \;\le\;
      \hLsdpg(\w^\dpg)
      \;\le\;
      (1 + \ksa) \;\| \x^\dpg - \w^\dpg \|_{s, \pp}^2
    \end{equation}
    with the explicit constant $\ksa$ defined in~\eqref{eq:ksa}.

  \item \label{item:dpg-loss-s-2} The loss functional $\hLsdpg$ is related to the error through the $K$-functional in~\eqref{eq:K-fun-defn}:
    \begin{equation}
      \label{eq:K-fun}
      \hLsdpg(\w^\dpg) = s^2 \| A_\alpha E (\hat \x - \hat \w) \|_{L_2}^2
      +
      K(s, (\x - \w) - E (\hat \x - \hat \w)).
    \end{equation}

  \item \label{item:dpg-loss-s-3} The functional $\hLss{s_1}{s_2}$
    in~\eqref{eq:Ls1s2} is non-negative.

  \item \label{item:dpg-loss-s-4} The interior variable error
    $\x - \w$ satisfies
    \begin{equation}
      \label{eq:interior-var-2-bounds}
      \hLssL{s_1}{s_2}(\w^\dpg)  \le   \| \x - \w \|_{L_2}^2  \le \hLssR{s_1}{s_2}(\w^\dpg).
    \end{equation}

  \item \label{item:dpg-loss-s-5} For all $\alpha$
    admitting~\eqref{eq:A-bdd-below} with a fixed $c_\alpha$, as
    $s_1 \to \infty$, both the upper and lower bounds above converge
    to the non-negative functional $\hLss{s_1}{s_2}$,
    \[
      \hLssL{s_1}{s_2}(\w^{\dpg}) \to \hLss{s_1}{s_2}(\w^{\dpg}), \qquad
      \hLssR{s_1}{s_2}(\w^{\dpg}) \to \hLss{s_1}{s_2}(\w^{\dpg}).
    \]

  \end{enumerate}
\end{theorem}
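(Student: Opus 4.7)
My plan is to prove part~\ref{item:dpg-loss-s-2} first, since the remaining four parts follow from it by elementary bookkeeping: part~\ref{item:dpg-loss-s-1} by Young's inequality at $\delta = \ksa$, part~\ref{item:dpg-loss-s-3} by monotonicity of the Peetre $K$-functional, and parts~\ref{item:dpg-loss-s-4}--\ref{item:dpg-loss-s-5} by algebraic and limit manipulations of part~\ref{item:dpg-loss-s-1}. Writing $\tilde e = \x - \w$ and $\hat e = \hat \x - \hat \w$ for brevity, the core analytic step is the residual representation $b_\alpha^\dpg(\x^\dpg - \w^\dpg, y) = (\tilde e - \bar z, A_\alpha^{*,h} y)_{L_2} + (A_\alpha \bar z, y)_{L_2}$ valid for every $\bar z \in Z(\hat e)$, where $A_\alpha^{*,h}$ denotes the element-wise (broken) adjoint. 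I obtain this by inserting $\pm \bar z$ into the interior integral of \eqref{eq:dpg-b-discrete} and integrating by parts on each element; because $\bar z \in \Hdiv \times \Ho$ is a conforming lift of $\hat e$, the boundary contributions from IBP cancel exactly against the trace pairings in $b_\alpha^\dpg$. A Cauchy--Schwarz inequality in $L_2 \times L_2$ with product weights $(1, s^2)$ against the test-norm weights $(1, s^{-2})$ then yields $\hLs{s}(\w^\dpg) \le \|\tilde e - \bar z\|_{L_2}^2 + s^2 \|A_\alpha \bar z\|_{L_2}^2$; minimizing over $\bar z \in Z(\hat e)$ and writing $\bar z = E\hat e + z$ with $z \in Z(0)$, the $A_\alpha$-orthogonality $(A_\alpha E\hat e, A_\alpha z)_{L_2} = 0$ (the defining property of $E$) decouples cross terms and produces the claimed $s^2 \|A_\alpha E\hat e\|_{L_2}^2 + K(s, \tilde e - E\hat e)$.

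The main obstacle is the matching lower bound in part~\ref{item:dpg-loss-s-2}: the natural Cauchy--Schwarz extremizer $y^* \propto s^2 A_\alpha \bar z^*$ for the optimal $\bar z^*$ is generically not in $\Ydpg = \Hdivoh \times H^1(\oh)$, because $A_\alpha \bar z^*$ lacks sufficient regularity. I plan to close this gap either by a density/smoothing argument that approximates $y^*$ in $\|\cdot\|_{\cY,s}$ by elements of $\Ydpg$, or more cleanly by identifying the Riesz representative $\vepzs$ from its Euler--Lagrange equation and invoking the standard primal--dual identity that equates the minimum of the associated mixed-method primal with the value of the $K$-functional; equivalently, one views $\hLs{s}(\w^\dpg)$ as the infimum of $\|\phi_1\|^2 + s^2\|\phi_2\|^2$ over all representations of $y \mapsto b_\alpha^\dpg(\x^\dpg - \w^\dpg, y)$ of the form $(\phi_1, A_\alpha^{*,h} y)_{L_2} + (\phi_2, y)_{L_2}$ and checks that the family parameterized by $\bar z \in Z(\hat e)$ already exhausts this infimum.

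The remaining parts then proceed cleanly. Part~\ref{item:dpg-loss-s-1} combines the formula $\hLs{s}(\w^\dpg) = \inf_{\bar z \in Z(\hat e)}(\|\tilde e - \bar z\|^2 + s^2 \|A_\alpha \bar z\|^2)$ just established with Young's inequality $\|\tilde e\|^2 \le (1 + \delta)\|\tilde e - \bar z\|^2 + (1 + 1/\delta)\|\bar z\|^2$ and the ellipticity bound $\|\bar z\|_{L_2} \le c_\alpha \|A_\alpha \bar z\|_{L_2}$ from \eqref{eq:A-bdd-below}. At $\delta = \ksa$, the defining quadratic $\ksa^2 s^2 = c_\alpha^2(1 + \ksa)$ behind \eqref{eq:ksa} makes $(1 + 1/\ksa) c_\alpha^2 = \ksa s^2$, so the Young cross term is exactly absorbed into $\ksa s^2 \|A_\alpha \bar z\|^2$, giving $(1 + \ksa)^{\pm 1}(\|\tilde e\|^2 + s^2 \|A_\alpha \bar z\|^2)$ in both directions; the infimum and the minimality $\inf_{Z(\hat e)} \|A_\alpha \cdot\|^2 = \|A_\alpha E\hat e\|^2$ then deliver the claimed two-sided bound. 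Part~\ref{item:dpg-loss-s-3} is the observation that $s \mapsto s^{-2} K(s, \tilde e - E\hat e) = \inf_{z \in Z(0)}(s^{-2}\|\tilde e - E\hat e - z\|^2 + \|A_\alpha z\|^2)$ is non-increasing in $s$ (its objective decreases pointwise in $z$), which rearranges directly to $s_2^2 K(s_1, \cdot) \ge s_1^2 K(s_2, \cdot)$, i.e., $\hLss{s_1}{s_2}(\w^\dpg) \ge 0$. For part~\ref{item:dpg-loss-s-4}, write $\hLs{s_i}(\w^\dpg) = \rho_i(\|\tilde e\|^2 + s_i^2 \|A_\alpha E\hat e\|^2)$ with $\rho_i \in [(1 + \ka{s_i})^{-1}, 1 + \ka{s_i}]$ as given by part~\ref{item:dpg-loss-s-1}; the combination $s_2^2 \hLs{s_1} - s_1^2 \hLs{s_2}$ eliminates the $\|A_\alpha E\hat e\|^2$ contribution when $\rho_1 = \rho_2$, and substituting the extremal admissible values of $\rho_i$ that yield the tightest upper and lower bounds on $\|\tilde e\|^2$ recovers exactly \eqref{eq:Ls1s2L} and \eqref{eq:Ls1s2R} after dividing by $s_2^2 - s_1^2$. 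Finally, part~\ref{item:dpg-loss-s-5} is immediate from \eqref{eq:ksa}: for fixed $c_\alpha$, $\ksa \to 0$ as $s \to \infty$, so $(1 + \ka{s_i})^{\pm 1} \to 1$ and both $\hLssL{s_1}{s_2}(\w^\dpg)$ and $\hLssR{s_1}{s_2}(\w^\dpg)$ converge to $\hLss{s_1}{s_2}(\w^\dpg)$.
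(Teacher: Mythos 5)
Your overall architecture is sound and, for items (3)--(5), essentially coincides with the paper's argument: the monotonicity $s^{-2}K(s,\cdot)\,\downarrow$ for item (3), the subtraction of the two one-sided bounds at $s_1$ and $s_2$ to cancel the interface term for item (4), and $\ksa\to 0$ for item (5). The genuine divergence is in items (1)--(2). The paper does not prove the two central identities at all: it cites \cite[Theorem~7.9, Propositions~7.10--7.11]{DPGacta} for the facts that the residual dual norm $\sup_{\y\in\Ydpg} b_\alpha^\dpg(\x^\dpg-\w^\dpg,\y)/\|\y\|_{\cl Y,s}$ equals $\bigl(s^2\|A_\alpha E(\hat\x-\hat\w)\|_{L_2}^2 + K(s,(\x-\w)-E(\hat\x-\hat\w))\bigr)^{1/2}$ and is $(1+\ksa)^{\pm 1/2}$-equivalent to $\|\cdot\|_{s,\pp}$, and then only observes that this dual norm equals $\|\vepzs\|_{\cl Y,s}$ by the Riesz representation in \eqref{eq:DPG-loss-s-prep}. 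You instead attempt to prove these identities from scratch. Your upper bound is correct: the representation $b_\alpha^\dpg(\x^\dpg-\w^\dpg,\y)=(\tilde e-\bar z, A_\alpha^{*,h}\y)_{L_2}+(A_\alpha\bar z,\y)_{L_2}$ for conforming lifts $\bar z\in Z(\hat e)$, weighted Cauchy--Schwarz, the normal-equation orthogonality $(A_\alpha E\hat e, A_\alpha\z)_{L_2}=0$ for $\z\in Z(0)$, and the Young-inequality computation with $\ksa^2 s^2=c_\alpha^2(1+\ksa)$ all check out and are in fact how the cited results are established.

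The one real gap is exactly the step you flag yourself: the matching \emph{lower} bound in item (2), i.e.\ that the supremum over the broken test space $\Ydpg$ is not strictly smaller than the infimum over conforming lifts $\bar z\in Z(\hat e)$. Neither of your two suggested closings is carried out. The smoothing route is delicate because the would-be extremizer couples the two components of $\y$ through $A_\alpha^*$ and must be approximated in the $s$-weighted broken graph norm while controlling the trace pairings; the primal--dual route is the right one (it is the content of the cited duality theorem for broken forms, which characterizes the dual norm of a functional on the broken space as an infimum over all conforming representers and shows the family indexed by $Z(\hat e)$ exhausts it), but as written it is an assertion, not a proof. Since every other item of the theorem is derived from \eqref{eq:K-fun} and \eqref{eq:dpg-loss-s}, this single unproved equality is load-bearing. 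A secondary, repairable point: in item (4) you write $\hLs{s_i}(\w^\dpg)=\rho_i\bigl(\|\tilde e\|^2+s_i^2\|A_\alpha E\hat e\|^2\bigr)$ and speak of choosing extremal $\rho_i$; the cancellation of the $\|A_\alpha E\hat e\|^2$ term is only exact if you use the upper inequality at $s_1$ and the lower inequality at $s_2$ (and vice versa) and subtract, as the paper does --- a single scalar $\rho_i$ substituted at its extremes does not literally produce \eqref{eq:Ls1s2L}--\eqref{eq:Ls1s2R}, though the intended manipulation is the correct one.
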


Before turning to the proof of these somewhat technical facts, a 
few remarks on how to interpret the results in
Theorem~\ref{thm:dpg-loss-s} and explain their relevance are in order.  The advantage of the new
loss function $\hLsdpg$ is evident from~\eqref{eq:dpg-loss-s} of
Theorem~\ref{thm:dpg-loss-s}(\ref{item:dpg-loss-s-1}). Namely, even
when $c_\alpha$ is very large, it is possible to choose a scaling
factor $s$ such that $c_\alpha/s \lesssim 1$, thus making the constants
in \eqref{eq:dpg-loss-s} approximately independent of $\alpha$.  This
motivates the use of the loss function $ \Lsdpg$
defined in~\eqref{eq:computable-Ls} (since
$\hLsdpg$ is not computable).
Yet, a drawback
of~\eqref{eq:dpg-loss-s} is that the error is measured in the
$\alpha$-dependent norm $\| \cdot \|_{s,\pp}$, incorporating the (stronger) graph-norm through the element boundary terms. The estimates
of~\eqref{eq:interior-var-2-bounds} in
Theorem~\ref{thm:dpg-loss-s}(\ref{item:dpg-loss-s-4}) address this
drawback by providing upper and lower bounds of an
{\em $\alpha$-independent norm} of the error in the interior variable
approximation. There are no unknown constants in those two estimates,
but the upper and lower bounds use two distinct functionals
$\hLssL{s_1}{s_2}$ and $\hLssR{s_1}{s_2}$. These two functional values
are then proved to converge to the value of the functional
$\hLss{s_1}{s_2}$ in
Theorem~\ref{thm:dpg-loss-s}(\ref{item:dpg-loss-s-5}).  Together with
the non-negativity of $\hLss{s_1}{s_2}$ proved in
Theorem~\ref{thm:dpg-loss-s}(\ref{item:dpg-loss-s-3}), this provides a
strong rationale for using $\hLss{s_1}{s_2}$ to obtain an error
certificate in an $\alpha$-independent norm for part of the total error. This part of the error consists of the error in $q$ and error in $u$ and is therefore comparable to the $H^1$ error, the canonical norm in which error is measured
for the usual primal formulation (see Example~\ref{eg:standard-weak-form}).
Again, since
$\hLss{s_1}{s_2}$ is not computable, we have resorted to the use
of $\Lss{s_1}{s_2}$ in~\eqref{eq:computable-Ls1s2} in some experiments to follow.
Multiple tests, reported in the remainder of this section, show the practical
effectiveness of these strategies.
Note that if we were to restrict ourselves to only collections of
$\alpha$ with a bounded $c_\alpha$ (of \eqref{eq:A-bdd-below}), then
the argument of the proof of Proposition~\ref{prop:VC-DPG} using the
same Fortin operator would prove the variational correctness of
$\Lsdpg$ with constants that depend on $c_\alpha$.  It is currently an open
question to obtain an analogue of~\eqref{eq:dpg-loss-s} with explicit
constants for $\Lsdpg$ instead of $\hLsdpg$.
We now prove each item of Theorem~\ref{thm:dpg-loss-s}.

\begin{proof}[Proof of Theorem~\ref{thm:dpg-loss-s}:]
  For any $\w^{\dpg} \in \Xdpg$, let 
  \[
    \| \w^{\dpg} \|_{b_\alpha^\dpg}
    = \sup_{0 \ne \y \in \Ydpg} \frac{|b_\alpha^\dpg(\w^{\dpg}, \y)|}{\| \y\|_{\cl Y, s}}.
  \]
  Since $b_\alpha^\dpg$ is bounded and satisfies the inf-sup condition (see
  \eqref{eq:cty-infsup}), this is a norm on $\Xdpg$.

  {\em Proof of~items~(\ref{item:dpg-loss-s-1})--(\ref{item:dpg-loss-s-2}):}
  By~\cite[Theorem~7.9, Propositions~7.10--7.11]{DPGacta},
  whenever~\eqref{eq:A-bdd-below} holds, we have
  \begin{equation}
    \label{eq:s-norm-K}
    \| \w^{\dpg} \|_{b_\alpha^\dpg} = s^2 \| A_\alpha E \hat \w \|_{L_2}^2 + K(s, \w - E \hat \w),
  \end{equation}
  and 
  \begin{equation}
    \label{eq:equiv-optim}
    (1 + \ksa)^{-1} \| \w^{\dpg} \|_{s,\pp}^2 \le \| \w^{\dpg}\|_{b_\alpha^\dpg}^2 \le (1 + \ksa) \| \w^{\dpg} \|_{s,\pp}^2
  \end{equation}
  for all $\w^{\dpg} \in \Xdpg$. Now observe that \eqref{eq:DPG-loss-s} implies 
  \begin{align*}
    \| \x^{\dpg} - \w^{\dpg}\|_{b_\alpha^\dpg}
    & =  \sup_{0 \ne \y \in \Ydpg} \frac{|b_\alpha^\dpg(\x^{\dpg} - \w^{\dpg}, \y)|}{\| \y\|_{\cl Y, s}}
      =  \sup_{0 \ne \y \in \Ydpg} \frac{|\ell^\dpg(\y)  - b_\alpha^\dpg (\w^{\dpg}, \y)|}{\| \y\|_{\cl Y, s}}
    \\
    & = \sup_{0 \ne \y \in \Ydpg} \frac{|(\vepzs, \y)_{\cl Y, s}|}{\| \y\|_{\cl Y, s}}
      = \| \vepzs\|_{\cl Y, s} = \hLsdpg(\w^{\dpg}).
  \end{align*}
  Hence~\eqref{eq:dpg-loss-s} and~\eqref{eq:K-fun} follow
  from~\eqref{eq:equiv-optim} and~\eqref{eq:s-norm-K}, respectively,
  with $\x^{\dpg} -\w^{\dpg}$ in  place of $\w^{\dpg}$.

  {\em Proof of~item~(\ref{item:dpg-loss-s-3}):} Since $0< s_1 < s_2,$
  \[
    s_1^{-2}\| \w - \z\|_{L_2}^2  +  \| A_\alpha \z \|_{L_2}^2
    \ge
    s_2^{-2} \| \w - \z\|_{L_2}^2 + \| A_\alpha \z \|^2_{L_2} 
  \]
  for any $\w \in \Xdpgo$ and $\z \in Z(0)$.  
  By~\eqref{eq:K-fun-defn}, this implies
  \begin{equation}
    \label{eq:K-mon}
    \frac{ K(s_1, \w)}{s_1^2}  \ge \frac{ K(s_2, \w) }{s_2^2}
  \end{equation}
  for any $\w \in \Xdpgo$.
  Let $\w' = \x - \w - E(\hat \x - \hat \w)$ in $\Xdpgo$.   By \eqref{eq:K-fun},
  \[
    \frac{\hLs{s_1}(\w^\dpg)}{s_1^2} =
    \| A_\alpha E (\hat \x - \hat \w) \|_{L_2}^2
    +
    \frac{ K(s_1, \w')}{s_1^2}.
  \]
  The same equality holds with $s_2$ also. Subtracting,
  \[
    \frac{\hLs{s_1}(\w^{\dpg})}{s_1^2}  - \frac{\hLs{s_2}(\w^{\dpg})}{s_2^2}
    = \frac{ K(s_1, \w')}{s_1^2} - \frac{ K(s_2, \w')}{s_2^2},
  \]
  which is nonnegative by~\eqref{eq:K-mon}. Hence
  $\hLss{s_1}{s_2} (\w^{\dpg}) \ge 0$.

  {\em Proof of item~(\ref{item:dpg-loss-s-4}):} Let
  $e_0 = \| \x - \w\|_{L_2}$, 
  $\hat e = \| A_\alpha ( \hat \x - \hat \w) \|_{L_2}$, and
  $l_i = \hLs{s_i}(\w^\dpg)$.
  Then
  by~\eqref{eq:dpg-loss-s},
  \begin{align*}
    \frac{e_0^2 }{s_1^2} + \hat e^2
    & \,\le \,(1 + \ka{s_1}) \;\frac{ l_1}{s_1^2},
    \\
    \frac{e_0^2 }{s_2^2} + \hat e^2
    & \,\ge \,(1 + \ka{s_2} )^{-1}
      \frac{ l_2}{ s_2^2}.
  \end{align*}
  Subtracting, 
  \begin{align*}
    e_0^2 \left( \frac{1}{s_1^2} -    \frac{1 }{s_2^2} \right)
    \le
    \frac{     (1 + \ka{s_1}) \,l_1}{s_1^2}
    -
    \frac{ (1 + \ka{s_2} )^{-1} l_2 }{ s_2^2}.
  \end{align*}
  Rearranging, we obtain
  \[
    e_0^2 \le  \hLssR{s_1}{s_2}(\w^\dpg),
  \]
  which is the upper inequality of
  \eqref{eq:interior-var-2-bounds}. The lower bound
  of~\eqref{eq:interior-var-2-bounds} is proved similarly.

  {\em Proof of~item~(\ref{item:dpg-loss-s-5}):} Holding $c_\alpha$
  fixed, we see from the definition of $\ksa$ in~\eqref{eq:ksa} that
  \[
    \lim_{s \to \infty} \ksa = 0.
  \]
  Hence the expressions for $\hLssL{s_1}{s_2}(\w^\dpg)$ and
  $\hLssR{s_1}{s_2}(\w^\dpg)$ in \eqref{eq:Ls1s2L}--\eqref{eq:Ls1s2R}
  converge to the expression for $\hLssL{s_1}{s_2}(\w^\dpg)$
  in~\eqref{eq:Ls1s2}.
\end{proof}

\subsection{Robustness of the $s$-based DPG loss function}
\label{ssec:robustness-s-based}

We train an NN (with the same architecture and other parameters
mentioned previously) using the new DPG fiber loss function $\Lsdpg$
(in the correspondingly modified mean squared
loss~\eqref{mean-squared}).  The NN prediction for any given parameter $\alpha$
now consists of an
interior solution prediction and interface solution prediction, which
are respectively denoted by $\x_{\theta,s}^\dpg(\pp)$ and
$ \hat \x_{\theta,s}^\dpg(\pp)$. The corresponding exact finite element
solutions are denoted by $\x_{h, s}^\dpg(\alpha)$ and $ \hat \x_{h, s}^\dpg(\alpha)$,
respectively.  Notice that when $s=1$, the loss function coincides
with the previous DPG loss $\Ldpg$ from \eqref{eq:DPG-loss}, a case we
have already discussed at length.  The $s=1$ case,  the prior FOSLS
solution $\xfosls(\alpha)$ from \eqref{eq:FOSLS-method}, and its NN prediction
$\x_\theta^\fosls(\alpha)$ (previously seen in~\eqref{eq:dpg-fosls-theta}),
will offer comparators as we now study variations with~$s$.

We examine the following error measures of interest:
\begin{equation}
  \label{eq:error-quantities}
\begin{aligned}
  &\hat e^\fosls(\pp) := \|A_{\alpha}\big(\x^\fosls_\theta(\pp) - \x^\fosls_h(\pp)\big)\|_{L_2}^2,
  && e_{0}^\fosls(\pp) := \|\x^\fosls_\theta(\pp) - \x^\fosls_h(\pp)\|_{L_2}^2,
  \\
  &\hat e_{s}^\dpg(\pp) : = \|A_{\alpha}\big(\hat \x^\dpg_{\theta,s}(\pp) - \hat \x^\dpg_{h,s}(\pp)\big)\|_{L_2}^2,
  && e_{0,s}^\dpg(\pp) := \|\x^\dpg_{\theta, s}(\pp) - \x^\dpg_{h, s}(\pp)\|_{L_2}^2.
\end{aligned}  
\end{equation}
We  use these quantities (not in any learning algorithm but) to study the effectiveness and $\alpha$-robustness of
the computable loss functionals to estimate the error in NN
predictions. We drop the $\pp$-argument when it is understood from context.
The exact solution $\x$ is unavailable and is absent from the computable errors
in \eqref{eq:error-quantities}. Nonetheless, 
we see, by means of the triangle inequality, that the FOSLS loss
$\Lfosls(\w) = \| A_\alpha \w - F\|_{L_2}^2 = \| A_\alpha (\w - \x)\|_{L_2}^2 $
bounds the first quantity $\hat e^\fosls$
in~\eqref{eq:error-quantities} as follows:
\begin{equation}
  \label{eq:trg-flsls}
    \hat e^\fosls = \|A_{\alpha}(\x^\fosls_\theta - \x^\fosls_h)\|_{L_2}^2 \leq 2\big[\Lfosls(\x^\fosls_\theta) + \Lfosls(\x^\fosls_h)\big].
\end{equation}
Since both sides of this inequality are computable once we have computed FOSLS solutions on chosen test sets in $\pdom$, we can use it to check the reliability
of the loss function.
For an  analogous measure  for the DPG $s$-dependent loss,
recall the $\|\cdot \|_{s,\pp}$-norm introduced in \eqref{eq:s-alpha-norm}
and observe that
\[
  \|\hat \x^\dpg_{\theta,s} - \hat \x^\dpg_{h,s}\|_{s,\pp}^2
  = e_{0,s}^\dpg + s^2 \hat e_{s}^\dpg.
\]
Then, \eqref{eq:dpg-loss-s} of Theorem \ref{thm:dpg-loss-s} and a new
application of the triangle inequality gives 
\begin{equation}
  \label{eq:trg-dpg}
  e_{0,s}^\dpg + s^2 \hat e_{s}^\dpg
  \leq 2 (1 + k_{s, \alpha}) \left [ \hLsdpg(\x^\dpg_{\theta,s})
    + \hLsdpg(\x^\dpg_{h,s}) \right].  
\end{equation}
Thus, just as in the FOSLS case~\eqref{eq:trg-flsls}, once we have
computed DPG solutions for test sets in $\pdom$, both
$e_{0,s}^\dpg + s^2 \hat e_{s}^\dpg$ and the loss functional values
in~\eqref{eq:trg-dpg} can be computed without knowing the exact
solution.

Recall that training on mean-squared losses like \eqref{mean-squared} approximates the solution $u(x,\pp)$ as a function of $x$ and $\pp$ {\em in expectation}, hence in a mean-squared sense. This does not imply beforehand any worst-case accuracy control over $\pdom$. On the other hand, it is well known that under the given assumptions the parameter-to-solution map is even {\em holomorphic}, despite the fact that for each $\pp\in\pdom$ the solution as a function of $x$ may have very low regularity. This suggests that worst-case errors and  the  mean-squared errors with respect to $\pp$ may not be that different. Subsequent experiments are to shed some light on this issue as well.

With these considerations in mind,  we propose the following experiment.  Let
$\cl Q_{M}^{\bar \alpha, \sigma}$ be a collection of $\alpha$
samples of size $M$ drawn from the same distribution described in
Section~\ref{sec:initial-results} with mean $\bar \alpha$ and standard
deviation $\sigma$.  Let
$\cl Q_{m}^{\bar \alpha, \sigma}$ denote its subset of the
first $m \le M$ samples. For any function $e(\alpha)$ over the samples, we denote
its {\em cumulative maximum} function by 
\[
  (\cmax e)(m) =
  \max_{\alpha \in \cl Q_{m}^{\bar \alpha, \sigma}} e(\alpha),
  \qquad m = 1, 2, \dots, M, 
\]
i.e., $  (\cmax e)(m) =  \|e\|_{\ell_\infty(\cl Q_{m}^{\bar \alpha, \sigma})}$.
Now, turning to \eqref{eq:trg-flsls}, we are motivated to define 
\begin{equation*}
  \hat \rho^{\fosls} := \cmax
  \left(\frac{\hat e^\fosls}{\Lfosls(\x^\fosls_\theta) + \Lfosls(\x^\fosls_h)}\right)
  % \leq 1.
\end{equation*}
a quantity which cannot be greater than two in view of \eqref{eq:trg-flsls}.
Similarly,  \eqref{eq:trg-dpg}  motivates the definition of
\begin{equation}
  \label{eq:rho-dpg-s}
  \rho_{s}^{\dpg} := \cmax\left(
    \frac{e_{0,s}^\dpg + s^2 \hat e_{s}^\dpg}{\Lsdpg(\hat \x^\dpg_{\theta,s}) + \Lsdpg(\hat \x^\dpg_{h,s})}\right).
\end{equation}
If we had used $\hLsdpg$ instead of $\Lsdpg$, then in view of
\eqref{eq:trg-dpg}, the right hand side above would have been bounded
by $2\cmax (1 + k_{s, \alpha}),$ which for large $s$ approaches the
value of 2 in view of~\eqref{eq:ksa}.  We are interested in observing
if this is the case in practice for the computable
version~$\rho_{M,s}^{\dpg}$. For completeness and to facilitate
comparisons, we also introduce the following analogous quantities for
FOSLS approximations,
\begin{equation*}
  \rho_{0}^{\fosls} := \cmax
  \left(% \sup_{\alpha \in \cl Q_{M}^{\bar \alpha, \sigma}}
    \frac{e_{0}^\fosls}{\Lfosls(\x^\fosls_\theta) + \Lfosls(\x^\fosls_h)}
  \right), \qquad
  \rho^{\fosls} := \cmax
  \left( % \sup_{\alpha \in \cl Q_{M}^{\bar \alpha, \sigma}} 
    \frac{e_{0}^\fosls + \hat e^\fosls}{\Lfosls(\x^\fosls_\theta)
      + \Lfosls(\x^\fosls_h)}\right).
\end{equation*}
We report these quantities observed from our experiment as we draw the
random samples of~$\alpha$.

% -------------------------------------------------------------------------------- %
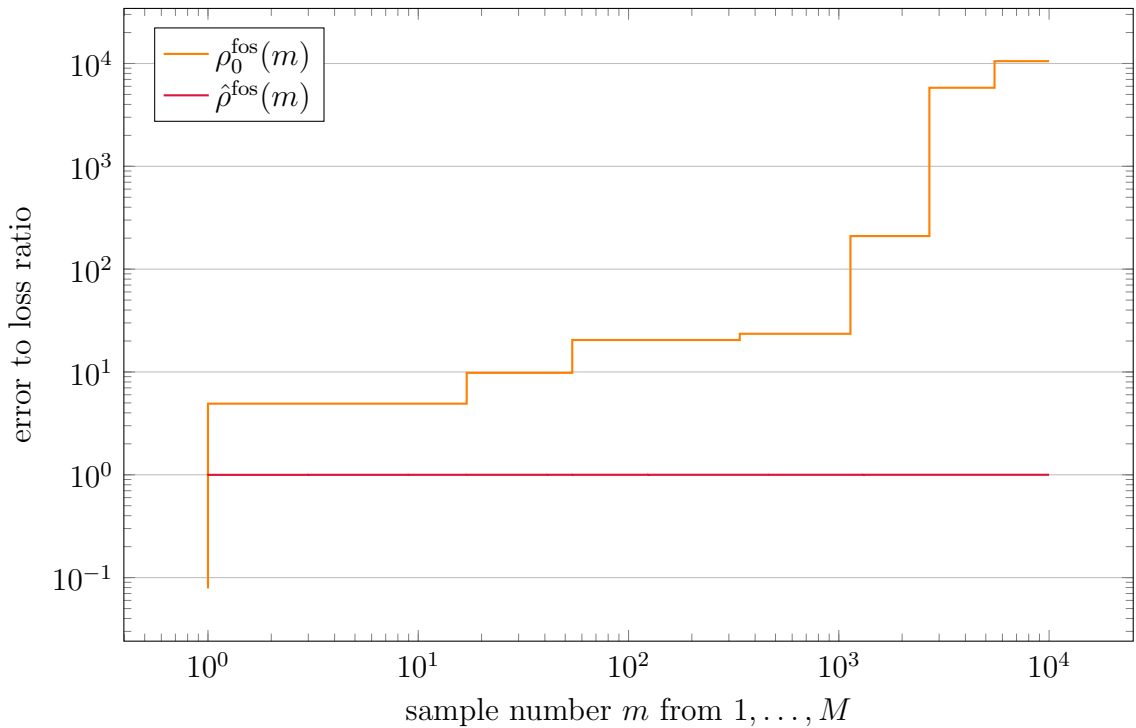
\begin{figure} % [ht]
\centering
\begin{tikzpicture}
\begin{loglogaxis}[
    width=15cm,
    height=10cm,
    xlabel={sample number $m$ from $1, \dots, M$},
    ylabel={error to loss ratio}, 
    legend pos=north west,
    xmajorgrids = false,
    xminorgrids = false,
    ymajorgrids = true,
    yminorgrids = false,
    legend cell align=left,
]

% FOSLS
\addplot+[mark=none, color=orange, thick, mark=none] table [x index=0, y index=1, col sep=comma] {data/fosls_l2norm_error_ratios_plot/fosls_l2_error_data.csv};
\addlegendentry{$\rho_{0}^{\fosls}(m)$}

% DPG-s=1.0
\addplot+[const plot, thick, color=crimson, mark=none] table [x index=0, y index=1, col sep=comma] {data/fosls_l2norm_error_ratios_plot/fosls_Anorm_error_data.csv};
\addlegendentry{$\hat \rho^\fosls(m)$}
\end{loglogaxis}
\end{tikzpicture}
\caption{Cumulative maxima of  FOSLS ratios of error to loss
  from a random collection of $M=10,000$ samples of $\alpha$.
  % error across $\cl Q_{M}^{\bar \alpha, \sigma}$
}
\label{fig:FOSLS_L2_error_ratios}
\end{figure}
% -------------------------------------------------------------------------------- %

Each  experiment drawing random
samples of $\alpha$ forms the set  $\cl Q_{M}^{\bar \alpha, \sigma}$
with $M = 10000$, $\bar \alpha = (10^{-1}, 1, 1, 10^{-1})$ and
$\sigma = 0.5$.  To establish baseline cases, we begin with the
FOSLS measures $\rho^{\fosls}$ and $\hat \rho^{\fosls}$. The
latter is bounded by two as seen above, whereas the theory  does
not provide any $\alpha$-independent bound for the former. The
experimental results in Figure~\ref{fig:FOSLS_L2_error_ratios} are in
agreement: they show that $\hat \rho^\fosls$ remains bounded across
samples while $\rho^{\fosls}$ grows as the random choices pick up
samples of $\alpha$ from the distribution tail.  In other
words, the ratios of squares of error norms to loss values do not
appear to be bounded robustly in $\alpha$ for FOSLS when the norm
contains the $\alpha$-independent $L_2$-norm of the error,
i.e., $L_2$ errors appear to suffer most from a worsening conditioning
  of the FOSLS operator.
 
% -------------------------------------------------------------------------------- %

\begin{figure}
\centering
\begin{tikzpicture}
\begin{loglogaxis}[
    width=15cm,
    height=10cm,
    xlabel={sample number $m$ from $1, \dots, M$},
    ylabel={error to loss ratio}, 
    legend pos=north west,
    xmajorgrids = false,
    xminorgrids = false,
    ymajorgrids = true,
    yminorgrids = false,
    legend cell align=left,
]

% FOSLS
\addplot+[mark=none, color=crimson, thick, mark=none] table [x index=0, y index=1, col sep=comma] {data/s_regularization_plot/fosls_s_regularization_data.csv};
\addlegendentry{$\rho^\fosls(m)$}

% DPG-s=1.0
\addplot+[const plot, thick, color=navy, mark=none] table [x index=0, y index=1, col sep=comma] {data/s_regularization_plot/dpg_s_regularization_s1_data.csv};
\addlegendentry{$\rho_{s=1}^\dpg(m)$}

% DPG-s=s1_1
\addplot+[const plot, thick, color=royalblue, mark=none] table [x index=0, y index=1, col sep=comma] {data/s_regularization_plot/dpg_s_regularization_s10_data.csv};
\addlegendentry{$\rho_{s=10}^\dpg(m)$}

% DPG-s=s1_2
\addplot+[const plot, thick, color=darkcyan, mark=none] table [x index=0, y index=1, col sep=comma] {data/s_regularization_plot/dpg_s_regularization_s100_data.csv};
\addlegendentry{$\rho_{s=100}^\dpg(m)$}

\end{loglogaxis}
\end{tikzpicture}
\caption{Cumulative maxima of  DPG ratios of error to loss
  from a random collection of $M=10,000$ samples of $\alpha$.}
\label{fig:s-regularization_plot}
\end{figure}
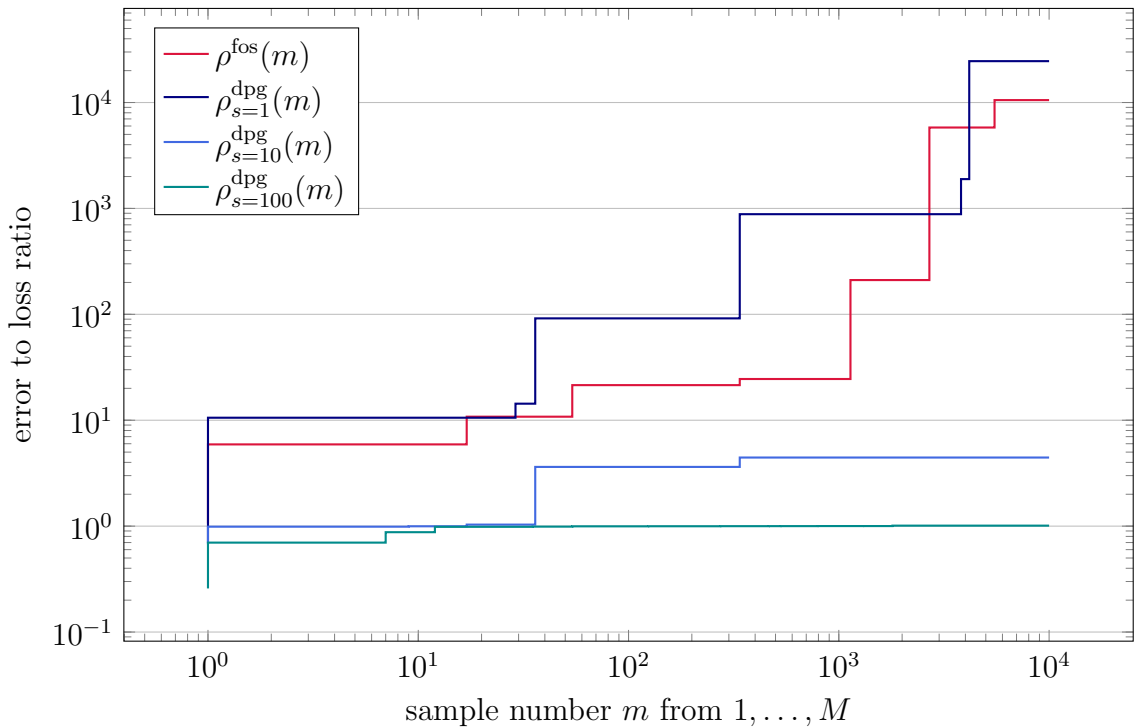
% -------------------------------------------------------------------------------- %

In the $s$-dependent DPG case, we observe better robustness for high
values of~$s$. Plots of $\rho^\dpg_{s}$ for $s=1, 10,$ and $100$
are shown in Figure~\ref{fig:s-regularization_plot} (together with the
baseline plot of $\rho^\fosls$ from
Figure~\ref{fig:FOSLS_L2_error_ratios}). As mentioned previously, we are interested
in seeing if these ratios would be bounded, in practice,
by $2\cmax (1 + k_{s, \alpha}),$ a value which
approaches two as $s \to \infty$ for bounded ranges of $\alpha$. We see in
Figure~\ref{fig:s-regularization_plot} that the  graph of
$\rho^\dpg_{s}$ remains flat with values close to one when
$s=100$, while for lower values
of $s$, it climbs as extreme values of $\alpha$ are picked up on
repeated sampling. Of course, we expect there are values of $\alpha$
which will make even $s=100$ insufficient to obtain a ratio close to
one. They may be picked up upon even further sampling. Nonetheless,
Figure~\ref{fig:s-regularization_plot} conveys the practical utility
of larger values of~$s$.

Conditioning and numerical stability will limit us from increasing $s$
unboundedly.  Indeed, the loss computation requires the
calculation of $\vepzhs \in \Yhdpg$, which involves the
inversion of a block diagonal matrix whose condition number
can deteriorate as $s$ increases.  If a range of $\alpha$ is known, then
a practical strategy would be to find an $s$-value that
works for that range and address the numerical stability issues for
that $s$~value.  Such $\alpha$-ranges may be available for specific
applications like porous media flow, but since this is
application-specific, we do not pursue it here. Instead, we proceed
to discuss a general strategy that can help us identify appropriate
$s$-values when an $\alpha$-range of interest is available.

\subsection{Training a DPG NN with $s$ and $\alpha$}
\label{subsec:(s,alpha)-surface}

The ultraweak DPG setting allows the possibility of tuning the test
norm using the scaling factor~$s$. This opens the possibility of
letting $s$ be yet another random parameter while training neural
networks. Having such an NN allows one to guess $s$ values that may be
appropriate for an identified range of $\alpha$ values of interest.

To execute this idea in practice, we consider a new NN trained using 
the scaled DPG fiber loss  function
\begin{equation}
\label{eq:L-s-alpha}
\cl L_{s,\alpha} = \frac{1}{s^2} \Lsdpg.
\end{equation}
The scaling by $1/s^2$ in \eqref{eq:L-s-alpha} is motivated
by~\eqref{eq:dpg-loss-s} of
Theorem~\ref{thm:dpg-loss-s}(\ref{item:dpg-loss-s-1}), which implies
that for any $\w^\dpg=(\w, \hat \w) \in \Xdpg$,
\[
  (1 + \ksa)^{-1}\,\frac{1}{s^2} \hLsdpg(\w^\dpg)
  \;\le\;
  \frac{1}{s^2} \| \x  - \w\|_{L_2}^2  + \| A_\alpha E (\hat \x -\hat \w)\|_{L_2}^2
  \;\le\;
  (1 + \ksa) \,\frac{1}{s^2} \hLsdpg(\w^\dpg).
\]
Without the $1/s^2$ scaling, the $\alpha$-dependent part of the error
norm, namely $\|A_\alpha E (\hat \x -\hat \w)\|_{L_2}^2 $ would get
amplified by $s^2$, so for large $s$, the loss $\hLsdpg(\w^\dpg)$
become extremely large and less sensitive to the $\alpha$-independent
error norm $\| \x - \w\|_{L_2}^2$.

\begin{figure}
  \centering
  %
  % Arxiv is not accepting these filenames!
  %
  %\includegraphics[scale=0.4]{img/DPG_Loss_vs_(s,alpha)_surface_contourf_0-50.pdf}
  %\includegraphics[scale=0.4]{img/DPG_Loss_vs_(s,alpha)_surface_contourf_50-100.pdf}
  % 
  \includegraphics[scale=0.4]{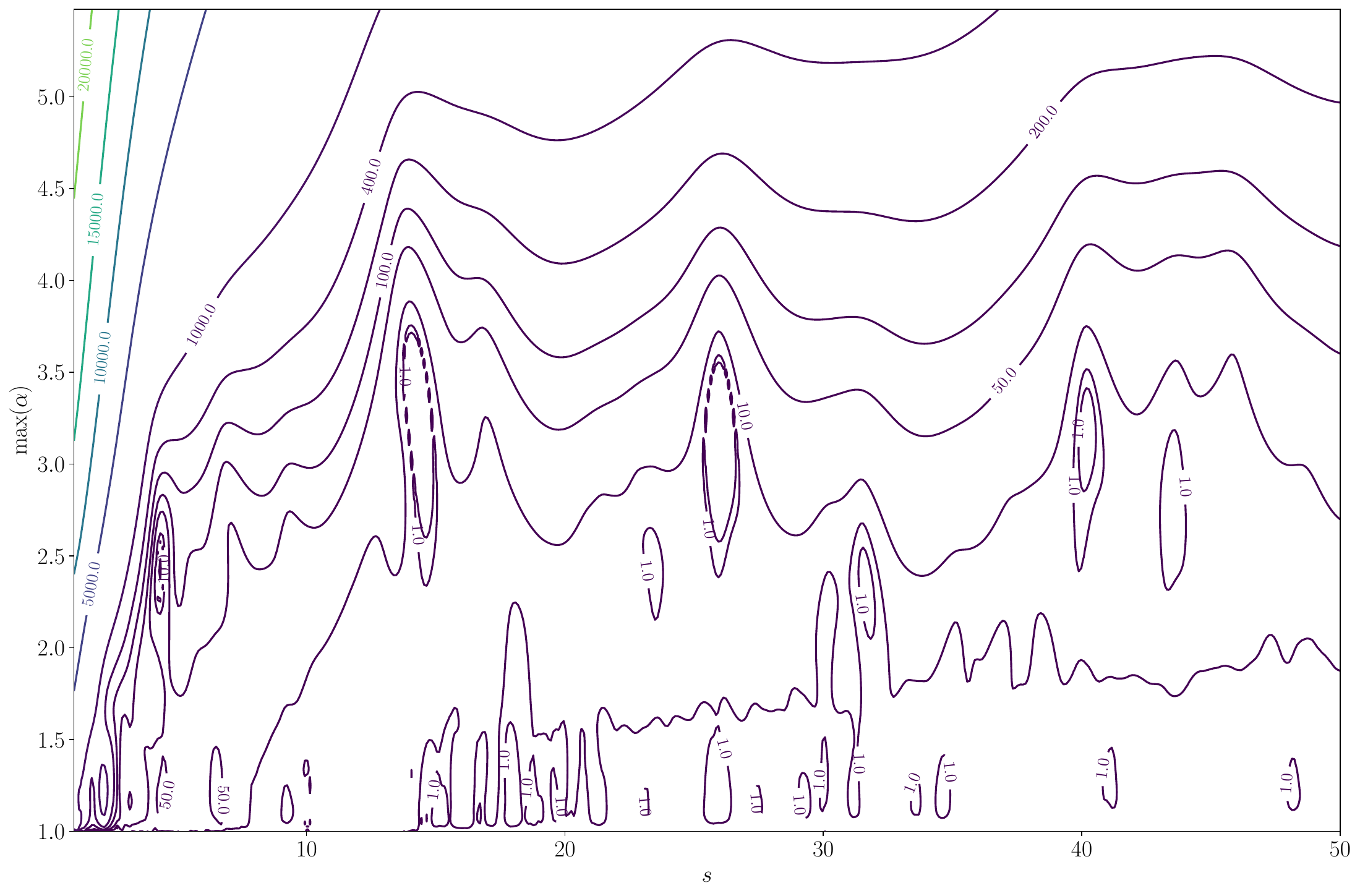}
  \includegraphics[scale=0.4]{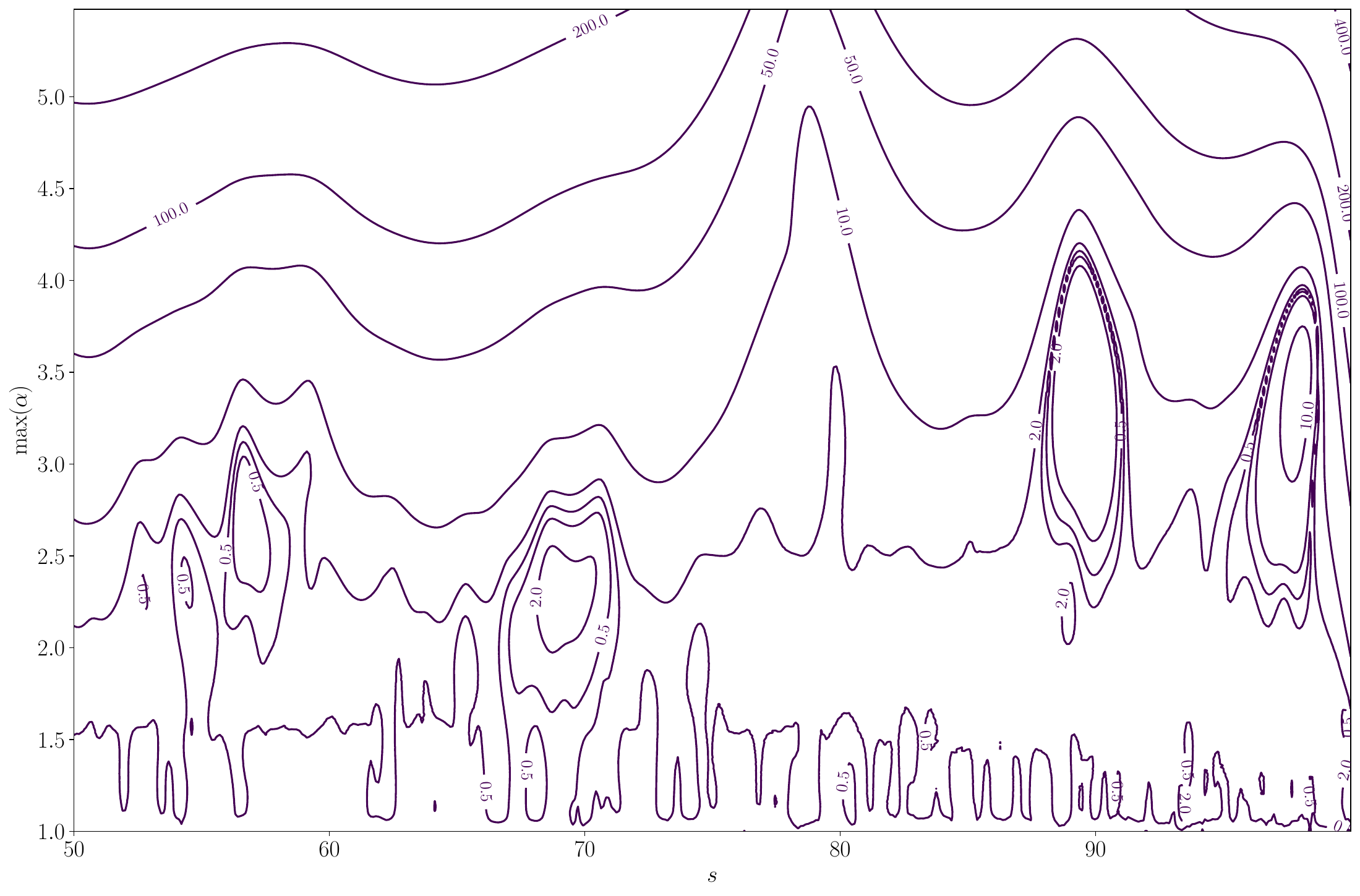}
  \caption{$\mathcal{L}_{s,\alpha}$ level sets}
  \label{fig:parameter-levelset}
\end{figure}

The training process is carried out using samples of $(\alpha, s)$.
We select the distribution for the random variable $(\alpha, s)$ to be
the Cartesian product of the $\alpha$-distribution derived from the
normal distribution as described in Section~\ref{sec:initial-results},
and the uniform distribution of $s$-values in the interval $[c, d]$.
Let $\cl Q_M^{\bar\alpha, \sigma, [c, d]}$ denote a collection
of $M$ samples of $(\alpha, s)$ drawn from this distribution.  The
training process is performed over $M=1024$
samples and $5000$ epochs at the learning rate of $0.001$.
The parameters of the random distribution
are set by
\begin{equation}
  \label{eq:s-alpha-expt-prm}
  \bar \alpha = (10^{-1}, 1, 1, 10^{-1}), \quad 
  \sigma = 0.5, \quad 
  c = 1, \quad d = 100,
\end{equation}
and the NN is trained with the loss \eqref{mean-squared}
where the fiber loss is replaced by $\cl L_{s,\alpha}$
from~\eqref{eq:L-s-alpha}. We can use the NN
to obtain solution predictions as before. But now it 
 is more interesting to use it to see the loss landscape
as a function of $s$ and $\alpha$. By drawing 10,000 samples of
$(\alpha, s)$ with $\alpha$ of the form $(\alpha_1, 1,1, \alpha_1)$ and computing
$\cl L_{s,\alpha}$  of the NN solution prediction for each sample, we obtained the
level sets in Figure~\ref{fig:parameter-levelset}. We
observe from the figure that for small $s$-values, the loss increases
steeply with the maximal $\alpha$-value. However, for larger
$s$-values, smaller losses are obtained for the same range of
maximal~$\alpha$.

To conclude this subsection, we have seen that the new loss function
in \eqref{eq:L-s-alpha} and its level sets in
Figure~\ref{fig:parameter-levelset} can be used as a guide to
determine the smallest $s$-values that give low losses for selected
maximal $\alpha$-ranges.

\subsection{NN trained with two-parameter DPG loss}

Recall Example~\ref{eg:uw} which suggested the possibility of
obtaining sharp error control in the $\alpha$-independent $L_2$-norm (hence still controlling the solution of \eqref{laplaceDP} in $H^1$).
The extent to which this might be realizable for the DPG interior
variables is suggested by
Theorem~\ref{thm:dpg-loss-s}(\ref{item:dpg-loss-s-4})--(\ref{item:dpg-loss-s-5}).
In light of these results, 
we approach this issue by
 training an NN using the loss \eqref{mean-squared} with the two-parameter fiber loss  functions $\Lss{s_1}{s_2}$ 
introduced in~\eqref{eq:computable-Ls1s2}. If we have an $\alpha$ 
data set range, its maximal and minimal ranges can be used to determine 
an appropriate $s_1 >0$ and then set $s_2 > s_1$. 
This can be appreciated in Figure~\ref{fig:parameter-levelset},
where for a fixed collection of $\alpha$ values we see how the loss 
function surface flattens as $s$ increases. So, intuitively, one may train
$\Lss{s_1}{s_2}$ over the box $[\min(\alpha), \max(\alpha)] \times 
[s_1, s_2]$. However, in this subsection, to keep the tests challenging, 
we will continue to use the unbounded $\alpha$ distribution as before and 
fix $s_1=50, s_2=100$.
The associated NN is trained setting
\[
  \bar \alpha = (10^{-1}, 1, 1, 10^{-1}), \qquad \sigma = 0.5,
\]
with a learning rate $0.001$, $5000$ epochs, $M = 1024$ training
samples, and a batch size of $32$ samples.

To test the performance of the trained NN,
we compute for each sampled $\alpha$ the DPG finite
element solution for a fixed $s=1$, take its interior part in $\Xdpgo$, denoting it by  $\x^{\dpg(s=1)}_{h}$, 
and compare it with the
NN prediction for the same $\alpha$, denoted by $\x^{\dpg(s_1,s_2)}_{\theta}$, obtained by substituting 
the fiber losses $\Lss{s_1}{s_2}$ into the lifted loss~\eqref{mean-squared}. The comparison is done using the {$\alpha$-independent} $L_2$ error norm
\begin{equation*}
  e_{0}^{\dpg(s_1,s_2)} :=
  \|\x^{\dpg(s_1,s_2)}_{\theta} -  \x^{\dpg(s=1)}_{h}\|_{L_2}^2,
\end{equation*}
which we view as a function of $\alpha$.
This error is compared against the analogous FOSLS error 
$e_{0}^{\fosls}$ introduced before in~\eqref{eq:error-quantities}.
Results from an experiment
carried out over $10000$ random samples of $\alpha$, taken from the previous
distribution, are displayed in
Figure~\ref{fig:e0-FOSLS-DPGs1s2_L2_error_ratios}.  Clearly, the DPG error in the $\alpha$-independent norm shows 
$\alpha$-robust performance, while the other does not.

\begin{figure}%[h]
\centering
\begin{tikzpicture}
\begin{loglogaxis}[
    width=15cm,
    height=10cm,
    xlabel={sample number $m$ from $1, \dots, M$},
    ylabel={$L_2$-error},
    legend pos=north west,
    xmajorgrids = false,
    xminorgrids = false,
    ymajorgrids = true,
    yminorgrids = false,
    legend cell align=left,
]

% FOSLS
\addplot+[mark=none, color=orange, thick, mark=none] table [x index=0, y index=1, col sep=comma] {data/Ls1s2_error_ratios/fosls_e0_error.csv};
\addlegendentry{$\cmax(e^\fosls_{0})$}

% DPG-s=1.0
\addplot+[const plot, thick, color=blue, mark=none] table [x index=0, y index=1, col sep=comma] {data/Ls1s2_error_ratios/dpg_s1s2_e0_error.csv};
\addlegendentry{$\cmax(e^{\dpg{(s_1,s_2)}}_{0})$}
\end{loglogaxis}
\end{tikzpicture}
\caption{Cumulative maxima of
  $\alpha$-independent-error-norms obtained using $\Lss{s_1}{s_2}$ with
  $s_1=50, s_2=100$ and the  FOSLS loss over $M=10,000$ samples of
  $\alpha$.}
\label{fig:e0-FOSLS-DPGs1s2_L2_error_ratios}
\end{figure}
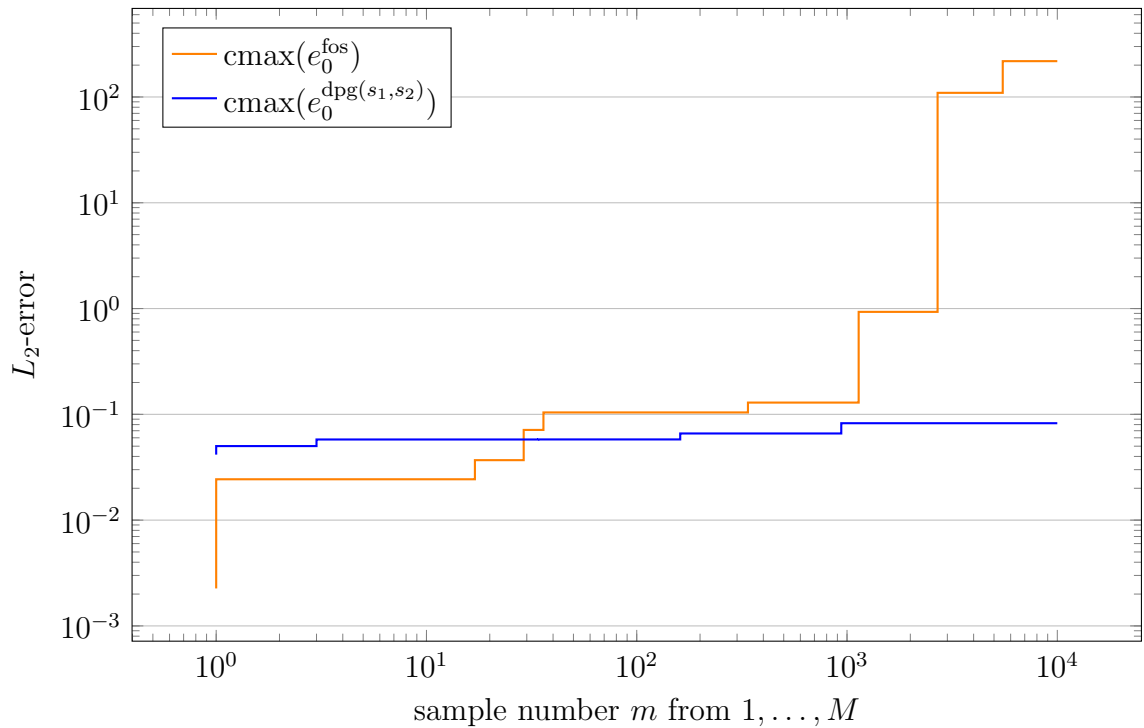

% -------------------------------------------------------------------------------- %

\section*{Acknowledgments}

This work was supported in part by the NSF under an FRG grant (DMS-2245077, DMS-2245097) and DMS-2012469. The work also benefited from the activities organized under the NSF RTG program (grants DMS-2136228 and DMS-2038080). The authors also acknowledge funding by the Deutsche Forschungsgemeinschaft (DFG, German Research Foundation)---project number 442047500---through the Collaborative Research Center ``Sparsity and Singular Structures'' (SFB 1481).

% ===================================================

\end{document}